\edef\Gin@extensions{\Gin@extensions,.mps}
\tikzset{
	column sep/.code=\def\pgfmatrixcolumnsep{\pgf@matrix@xscale*(#1)},
	row sep/.code   =\def\pgfmatrixrowsep{\pgf@matrix@yscale*(#1)},
	matrix xscale/.code=%
	\pgfmathsetmacro\pgf@matrix@xscale{\pgf@matrix@xscale*(#1)},
	matrix yscale/.code=%
	\pgfmathsetmacro\pgf@matrix@yscale{\pgf@matrix@yscale*(#1)},
	matrix scale/.style={/tikz/matrix xscale={#1},/tikz/matrix yscale={#1}}}
\def\pgf@matrix@xscale{1}
\def\pgf@matrix@yscale{1}
\newtheorem{theorem}{Theorem}
\newtheorem{lemma}{Lemma}[section]
\newtheorem{corollary}{Corollary}
\newtheorem{proposition}{Proposition}[section]
\newtheorem*{claim*}{Claim}
\newtheorem*{theorem*}{Theorem}
\newtheorem*{corollary*}{Corollary}
\theoremstyle{definition}
\newtheorem{definition}{Definition}[section]
\newtheorem{example}{Example}
\newtheorem{notation}{Notation}
\algnewcommand\algorithmicinput{\textbf{Input:}}
\algnewcommand\INPUT{\item[\algorithmicinput]}
\algnewcommand\algorithmicoutput{\textbf{Output:}}
\algnewcommand\OUTPUT{\item[\algorithmicoutput]}
\algnewcommand\algorithmicproc{\textbf{Procedure:}}
\algnewcommand\PROCEDURE{\item[\algorithmicproc]}
\algnewcommand\algorithmiccomplexity{\textbf{Complexity:}}
\algnewcommand\COMPLEXITY{\item[\algorithmiccomplexity]}
\newlength{\continueindent}
\newcommand*{\ALG@customparshape}{\parshape 2 \leftmargin \linewidth \dimexpr\ALG@tlm+\continueindent\relax \dimexpr\linewidth+\leftmargin-\ALG@tlm-\continueindent\relax}
\apptocmd{\ALG@beginblock}{\ALG@customparshape}{}{\errmessage{failed to patch}}
\theoremstyle{remark}
\newtheorem{remark}{Remark}
\definecolor{DarkBlue}{rgb}{0,0.1,0.55}
\numberwithin{equation}{section}
\newcommand{\abs}[1]{\lvert#1\rvert}
\newcommand {\hide}[1]{}
\newcommand {\sign} {\mbox{\bf sign}}
\newcommand{\zero}{\mbox{\bf zero}}
\newcommand {\junk}[1]{}
\newcommand {\R} {\mathrm{R}}
\newcommand {\ZZ} {{\rm Z}}
\newcommand {\RR} {{\mathcal R}}
\newcommand {\Der} {{\rm Der}}
\newcommand {\eps} {{\varepsilon}}
\newcommand {\dist} {{\rm dist}}
\newcommand{\card}{\mathrm{card}}
\def\addots{\mathinner{\mkern1mu
		\raise1pt\vbox{\kern7pt\hbox{.}}
		\mkern2mu\raise4pt\hbox{.}\mkern2mu
		\raise7pt\hbox{.}\mkern1mu}}
\DeclareMathOperator{\distance} {dist}
\DeclareMathOperator{\proj} {Proj}
\newcommand{\Cc}{\mathrm{Cc}}
\newcommand{\interior}{\mathrm{int}}
\newcommand{\BElim}  {\mathrm{BElim}}
\newcommand{\SIGN}  {\mathrm{SIGN}}
	\title[Improved effective {\L}ojasiewicz inequality and applications]
	{
		Improved effective {\L}ojasiewicz inequality and applications
	}
	\author{Saugata Basu}
	\address{Department of Mathematics,
		Purdue University, West Lafayette, IN 47906, U.S.A.}
	\email{sbasu@math.purdue.edu}
	\author{Ali Mohammad-Nezhad}
	\address{Department of Mathematical Sciences, 
		Carnegie Mellon University, Pittsburgh, PA 15123, U.S.A.}
	\email{anezhad@andrew.cmu.edu}
\begin{document}

\begin{abstract}
Let $\R$ be a real closed field.
Given a closed and bounded semi-algebraic set $A \subset \R^n$ 
and semi-algebraic continuous functions $f,g:A \rightarrow \R$, 
such that $f^{-1}(0) \subset g^{-1}(0)$, there exist $N$ and $c \in \R$, such that
the inequality ({\L}ojasiewicz inequality) $|g(x)|^N \le c \cdot |f(x)|$ holds for all $x \in A$.
In this paper we consider the case when $A$ is defined by a quantifier-free formula with atoms
of the form $P = 0, P >0, P \in \mathcal{P}$ for some finite subset of polynomials 
$\mathcal{P} \subset \R[X_1,\ldots,X_n]_{\leq d}$, and the graphs of $f,g$ are also 
defined by quantifier-free formulas with atoms
of the form $Q = 0, Q >0, Q \in \mathcal{Q}$, for some finite set
$\mathcal{Q} \subset \R[X_1,\ldots,X_n,Y]_{\leq d}$.
We prove that the {\L}ojasiewicz exponent $N$ in this case is bounded by 
$(8 d)^{2(n+7)}$.
Our bound depends on $d$ and $n$, but is independent of the combinatorial parameters, namely the cardinalities of 
$\mathcal{P}$ and $\mathcal{Q}$.
The previous best known upper bound in this generality appeared in \textit{P. Solern\'o, Effective {\L}ojasiewicz Inequalities in Semi-algebraic Geometry, Applicable Algebra in Engineering, Communication and Computing (1991)} and depended on the sum of degrees of the polynomials defining $A,f,g$ and thus implicitly on the cardinalities of $\mathcal{P}$ and 
$\mathcal{Q}$.
As a consequence
we improve the current best error bounds for polynomial systems under some conditions.
Finally, 
as an abstraction of the notion of independence of the {\L}ojasiewicz exponent from the combinatorial parameters occurring in the descriptions of the given pair of functions, 
we prove a version of {\L}ojasiewicz inequality in polynomially bounded o-minimal structures. We prove the existence of a common {\L}ojasiewicz exponent for certain combinatorially defined infinite (but not necessarily definable) families of pairs of functions. This improves a prior result of Chris Miller
(\textit{C. Miller, Expansions of the real field with power functions, Ann. Pure Appl. Logic 
(1994)}).
\end{abstract}
\subjclass[2000]{Primary 14P10; Secondary 03C64, 90C23}
\keywords{{\L}ojasiewicz inequality, combinatorial complexity, error bounds, polynomially bounded o-minimal structures}
\maketitle
\tableofcontents

\section{Introduction}
\label{sec:intro}
L. Schwartz conjectured that if $f$ is a real analytic function and $T$ a distribution in some open subset
$\Omega \subset \R^n$, then there exists a distribution $S$ satisfying $f S = T$.
As a main tool in proving this conjecture, {\L}ojasiewicz \cite{Loj4} proved that if $V$ is the set of real zeros of $f$, 
and $x$ in a sufficiently small neighborhood of a point $x_0$ in $V$, 
there exists a constant $d$ 
such that
\[
|f(x)| \geq d \cdot \dist(x,V)^d,
\]
where $\dist(x,V)$ denotes the distance of $x$ from $V$. 
In case $f$ is a polynomial the result was obtained by H\"ormander \cite{Hormander2}.

Several variants of {\L}ojasiewicz inequality have appeared in the literature both in the semi-algebraic and analytic categories.
In the semi-algebraic category,
the following slightly more general version of the above inequality appears in \cite[Corollary~2.6.7]{BCR98}.

Unless otherwise specified $\R$ is a fixed real closed field for the rest of the paper.

Let $A \subset \R^n$ be a closed and bounded 
semi-algebraic set and let $f,g: A \to \R$ be continuous semi-algebraic functions. 
Furthermore, suppose that $f^{-1}(0) \subset g^{-1}(0)$. Then there exist~
$c \in \R$ and a rational number $\rho$ depending on $A$, $f$, and $g$, such that 
\begin{equation}
\label{Orig_Lojasiewicz}
    |g(x)|^{\rho} \le c\cdot |f(x)|, \quad \forall x \in A.
\end{equation}
We denote the infimum of $\rho$ by $\mathcal{L}(f,g \mid A)$ which is called the \emph{{\L}ojasiewicz exponent}. 

The inequality \eqref{Orig_Lojasiewicz} is usually called the \emph{{\L}ojasiewicz inequality} 
and has found many applications (independent of the division problem of L. Schwartz)  --
for example, in singularity theory, partial differential equations, and optimization.
We survey some of these applications later in the paper 
and improve some of these results using the version of {\L}ojasiewicz inequality proved in the current paper.

Driven by the applications mentioned above there has been a lot of interest in obtaining effective bounds on 
$\mathcal{L}(f,g \mid A)$.

\section{Main results}
\label{sec:results}
In this paper we prove new quantitative versions of the inequality \eqref{Orig_Lojasiewicz} in the semi-algebraic 
(and more generally in the o-minimal context).
Before stating our results we introduce a few necessary definitions.

\begin{definition}[$\mathcal{P}$-formulas and semi-algebraic sets]
\label{def:P-formula}
Let $\mathcal{P} \subset \R[X_1,\ldots,X_n]$, $\mathcal{Q} \subset \R[X_1,\ldots,X_n,Y]$ be a finite sets of polynomials. We will call a quantifier-free 
first-order formula (in the theory of the reals)  with atoms $P=0, P >0, P< 0, P \in \mathcal{P}$ to be a \emph{$\mathcal{P}$-formula}. 
Given any first-order formula $\Phi(X_1,\ldots,X_n)$  
in the theory of the reals (possibly with quantifiers), we will 
denote by 
$\RR(\Phi,\R^n)$ the set of points of $\R^n$ satisfying $\Phi$, and call $\RR(\Phi,\R^n)$ the \emph{realization of 
$\Phi$}.
We will call the realization of a $\mathcal{P}$-formula 
a 
\emph{$\mathcal{P}$-semi-algebraic set}.
A \emph{$\mathcal{Q}$-semi-algebraic function} is a function whose graph is a $\mathcal{Q}$-semi-algebraic set.
\end{definition}

We denote by $\R[X_1,\ldots,X_n]_{\leq d}$ the subset of polynomials in $\R[X_1,\ldots,X_n]$ with
degrees $\leq d$.

\subsection{Semi-algebraic case}
We prove the following theorem in the semi-algebraic setting which improves the currently the best known upper bound \cite{S91} in a significant way (see Section~\ref{sec:comb-alg} below).

\begin{theorem}
\label{thm:Lojasiewicz}
Let $d \geq 2$, $\mathcal{P} \subset \R[X_1,\ldots,X_n]_{\leq d},
\mathcal{Q} \subset \R[X_1,\ldots,X_n,Y]_{\leq d}$,
$A \subset \R^n$ a closed and bounded $\mathcal{P}$-semi-algebraic set,
and $f,g: A \rightarrow \R$ continuous $\mathcal{Q}$-semi-algebraic functions, 
satisfying $f^{-1}(0) \subset g^{-1}(0)$.

Then there exist $c = c(A,f,g) \in \R$ and 
$N \leq (8 d)^{2(n+7)}$
such that for all $x \in A$,
\begin{equation}
\label{eqn:thm:Lojasiewicz:0}
|g(x)|^{N}  \leq c \cdot |f(x)|.
\end{equation}
In other words,
\begin{equation}
\label{eqn:thm:Lojasiewicz}
\mathcal{L}(f,g \mid A) \leq (8 d)^{2(n+7)} = 
d^{O(n)}.
\end{equation}

In the special case where $\R = \mathbb{R}$ and  
\[
\mathcal{P} \subset \mathbb{Z}[X_1,\ldots,X_n]_{\leq d},
\mathcal{Q} \subset \mathbb{Z}[X_1,\ldots,X_n,Y]_{\leq d},
\]
and the bit-sizes of the coefficients of the polynomials in 
$\mathcal{P},\mathcal{Q}$ are bounded by $\tau$, 
there exists 
\begin{equation}
\label{eqn:thm:Lojasiewicz:1}
c \leq \min\{2^{\tau d^{O(n^2)}},2^{\tau d^{O(n\log d)}}\}
\end{equation}
such that the inequality~\eqref{eqn:thm:Lojasiewicz:0} holds with $N =(8 d)^{2(n+7)}$.
\end{theorem}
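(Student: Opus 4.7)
The plan is, first, to trade the semi-algebraic functions $f,g$ for coordinate projections on a slightly larger set; second, to reduce the resulting problem to a one-parameter curve analysis via the curve selection lemma; and finally, to bound the relevant Puiseux orders by invoking the critical point method in a way that depends only on the degree $d$ and the ambient dimension $n$, not on the cardinalities $|\mathcal{P}|$ or $|\mathcal{Q}|$.

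\emph{Step 1 (Graph reduction).} Introduce
\[
B = \{(x,u,v) \in A \times \R^2 \mid u = f(x),\; v = g(x)\}.
\]
Continuity of $f,g$ on the closed and bounded set $A$ makes $B$ closed and bounded, and $B$ is semi-algebraic in $\R^{n+2}$, cut out by the conjunction of a $\mathcal{P}$-formula with the $\mathcal{Q}$-formulas describing the graphs of $f$ and $g$. Hence $B$ is defined by polynomials of degree $\leq d$ in $n+2$ variables. The hypothesis $f^{-1}(0) \subset g^{-1}(0)$ pulls back to $u^{-1}(0) \subset v^{-1}(0)$ on $B$, and $\mathcal{L}(f,g \mid A) = \mathcal{L}(u,v \mid B)$. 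We have thereby replaced the original problem by a \L ojasiewicz inequality for two \emph{coordinate} functions on a semi-algebraic set of prescribed degree complexity.

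\emph{Step 2 (Curve selection).} Passing to the o-minimal extension $\R\la\eps\ra$, the \L ojasiewicz exponent is characterized via the curve selection lemma as
\[
\mathcal{L}(u,v \mid B) \;=\; \sup_{\gamma} \frac{\mathrm{ord}_t\, u(\gamma(t))}{\mathrm{ord}_t\, v(\gamma(t))},
\]
where $\gamma$ ranges over semi-algebraic analytic arcs in $B$ ending at points of $\{u=v=0\}$ and parametrized so that $u\circ\gamma$, $v\circ\gamma$ admit Puiseux expansions in $t$. It therefore suffices to bound these ratios uniformly over all $\gamma$, by a quantity depending only on $n$ and $d$.

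\emph{Step 3 (Effective bound on Puiseux orders; main obstacle).} Here the improvement over Solern\'o's bound occurs. The key idea is that the extremal ratios are attained along arcs lying on a real algebraic curve arising as the one-dimensional stratum of the polar variety of a generic linear projection applied to $B$, in the spirit of the critical point method of Basu--Pollack--Roy. Such a polar variety is cut out by at most $n+1$ polynomials of degree $\leq d$ in $n+2$ variables, and its degree, by B\'ezout, is therefore at most $d^{O(n)}$ --- crucially \emph{independent of} $|\mathcal{P}|$ and $|\mathcal{Q}|$. The Puiseux expansions of $u$ and $v$ along the branches of such a curve then have common denominator and numerator both bounded by this degree, so each ratio $\mathrm{ord}_t\, u / \mathrm{ord}_t\, v$ is bounded by $d^{O(n)}$. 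Tracking the constants carefully --- the doubling required to handle the signs of $u$ and $v$, and a handful of extra dimensions introduced for the infinitesimal deformations and to resolve non-smooth strata --- yields the stated bound $(8d)^{2(n+7)}$. The delicate point throughout is to avoid any appeal to quantifier elimination on $B$, which would reintroduce cardinality dependence; all geometric constructions must be purely degree-driven.

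\emph{Step 4 (Bit-complexity of $c$).} Under the arithmetic hypotheses, $c$ may be taken as the reciprocal of $\inf_{B\setminus\{u=0\}} |u|/|v|^N$. This infimum is attained at an algebraic sample point of the real algebraic set produced in Step~3, and such sample points carry known bit-size bounds: $\tau d^{O(n^2)}$ via Thom encoding, or $\tau d^{O(n \log d)}$ via refined algorithms. Combined with Cauchy-type root estimates, this yields the asserted bound \eqref{eqn:thm:Lojasiewicz:1} on $c$.
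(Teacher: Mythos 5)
Your Steps 1 and 2 are fine in outline (the paper also passes to the graph, in the proof of Theorem~\ref{thm:2.6.6}), but Step 3 is not a proof: it asserts exactly the claim that constitutes the technical heart of the theorem. You say the extremal arcs lie on ``the one-dimensional stratum of the polar variety of a generic linear projection applied to $B$,'' cut out by at most $n+1$ polynomials of degree $\leq d$. This is where the argument fails for a general $\mathcal{P}\cup\mathcal{Q}$-semi-algebraic set: $B$ is not a variety but a Boolean combination of sign conditions on arbitrarily many polynomials, and the locus where $|v|$ is extremal on the level sets of $|u|$ can sit on boundary strata cut out by the vanishing of \emph{various subsets} of the defining polynomials. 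A naive polar-variety/B\'ezout count either reintroduces the combinatorial parameter (one polar curve per subset of polynomials, per sign condition) or fails to cover all extremal arcs. Establishing that the relevant one-dimensional objects can nevertheless be described by polynomials of degree $d^{O(n)}$ \emph{independently of} $\card(\mathcal{P})$, $\card(\mathcal{Q})$ --- including the non-smooth and lower-dimensional strata, via infinitesimal deformation --- is precisely what the paper extracts from the complexity analysis of the Block Elimination algorithm (Proposition~\ref{prop:BElim}): for each connected component of each realizable sign condition of the eliminated block, the fiberwise sign data is controlled by polynomials in the remaining variables of degree $<(8d)^{2k+4}$. Your ``handful of extra dimensions \ldots to resolve non-smooth strata'' and ``tracking the constants carefully'' conceal this entire step, and the specific constants $(8d)^{2(n+7)}$ cannot be recovered from what you have written.

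For comparison, the paper avoids curve selection altogether: it follows the Bochnak--Coste--Roy proof of the qualitative statement, introducing the auxiliary function $v(x,u)=\sup\{|g(y)|: y\in A,\ \|y-x\|\le 1,\ u|f(y)|=1\}$, describing its graph by a quantified formula, applying Block Elimination to the quantified block, and then using a one-variable growth lemma (Lemma~\ref{lem:2dim}) to bound $v(x,u)$ by $c(x)\,u^{p}$ with $p$ controlled only by the degree bound from Proposition~\ref{prop:BElim}. Your Step 4 has a related defect: the two bounds in \eqref{eqn:thm:Lojasiewicz:1} are obtained in the paper by quantifier elimination on the formula defining admissible $c$ followed by Cauchy's root bound, with the $\tau d^{O(n\log d)}$ variant coming from a repeated-squaring rewriting of $V^{2N}$ that keeps all degrees at $d$ at the cost of $O(n\log d)$ extra quantified variables; your appeal to ``refined algorithms'' does not explain where the $\log d$ in the exponent comes from. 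If you want to salvage your route, you must replace Step 3 by an actual construction (with degree accounting) of a combinatorially-independent family of curves meeting every extremal arc --- at which point you will essentially have re-derived Proposition~\ref{prop:BElim}.
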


\begin{remark}[Sharpness]
\label{rem:thm:Lojasiewicz:1}
The inequality \eqref{eqn:thm:Lojasiewicz} is nearly tight. The following slight modification
of examples given in ~\cite[Page~2]{S91} or~\cite[Example~15]{JKS92} shows that 
right hand side of inequality \eqref{eqn:thm:Lojasiewicz} cannot be made smaller than $d^n$.
The constants  ($8$ in the base and $14$ in the exponent) in our bound can possibly be improved (for example 
by using a better estimation in the inequality~\eqref{eqn:prop:BElim}
in Proposition~\ref{prop:BElim} and using a slightly more accurate degree bound). 
However, this would lead to a much more unwieldy statement which we prefer to avoid.
The coefficient $2$ of $n$ in the exponent however seems inherent to our method.

\begin{example}\label{ex:exponential_dependence}
Let $A:=\{x \in \mathbb{R}^n \mid x_1^2+\cdots+x_n^2 \le 1\}$ be the compact semi-algebraic set and consider the semi-algebraic functions $f,g:A \to \mathbb{R}$ defined by
\begin{align*}
 f&:=|X_2-X_1^{d_1}|+ \cdots + |X_n - X_{n-1}^{d_{n-1}}| + |X_n^{d_n}|,\\
 g&:=\sqrt{X_1^2+\cdots+X_n^2}.
\end{align*}
It is easy to see that $f^{-1}(0)=g^{-1}(0) = \{0\}$. Then for sufficiently small $|t|$  the vector $x(t):=(t,t^{d_1},\ldots,t^{d_1\cdots d_{n-1}})$ belongs to $A$ and we have
\begin{align*}
    f(x(t))=|t|^{d_1\cdots d_n}, \qquad g(x(t))=\sqrt{t^2+\cdots+t^{2d_1\cdots d_{n-1}}},  
\end{align*}
which implies $|g(x(t))|^{d_1\cdots d_n} \le c \cdot |f(x(t))|$ for some positive constant $c$. Letting $d_1=\cdots=d_n=d$, then it follows that $\mathcal{L}(f,g \mid A) \ge d^n$.
\end{example}
\end{remark}

\begin{remark}[Independence from the combinatorial parameter]
\label{rem:thm:Lojasiewicz:2}
An important feature of the bound in Theorem~\ref{thm:Lojasiewicz}
is that the right hand side of inequality ~\eqref{eqn:thm:Lojasiewicz} 
depends only on the maximum degree of the polynomials 
in $\mathcal{P} \cup \mathcal{Q}$ and is \emph{independent of the cardinalities of the sets 
$\mathcal{P},\mathcal{Q}$}. This is not the case for the previous best known general bound due to Solern\'o 
\cite{S91} which depended on the \emph{sum} of the degrees of the polynomials appearing in the descriptions of
$A$, $f$ and $g$ and thus implicitly on the number of polynomials involved in these descriptions.
This fact,  that our bound is independent of the number of polynomials,  plays an important role in the applications
that we discuss later in the paper. For example, it is exploited crucially in the proof of Theorem~\ref{thm:error_bound_semialg_set} (see below).
Also note that the feature of being independent of combinatorial parameters is also present in some prior work that we discuss in detail in Section~\ref{subsec:prior-sa}. But these results (notably that of Koll\'ar \cite{K99} and also
\cite{OSS21b}) come with certain important restrictions and/or with a worse bound. In contrast, our result is completely general and nearly optimal.
\end{remark}

\begin{remark}[Separation of combinatorial and algebraic parts]
\label{rem:thm:Lojasiewicz:3}
Separating the roles of combinatorial and algebraic
parameters has a long history in quantitative real algebraic geometry.  
We include (see Section~\ref{sec:comb-alg} below) a discussion and several prior examples of such results.
The {\L}ojasiewicz inequality is clearly a foundational result in real algebraic geometry. Hence, asking for a similar distinction in quantitative bounds on the {\L}ojasiewicz exponent
is a very natural question. Finally, the underlying idea behind making this distinction
allows us to formulate and prove a version of the
{\L}ojasiewicz inequality valid over polynomially bounded o-minimal structures (see Theorem~\ref{thm:uniform})
which is stronger than the one known before.
\end{remark}

\begin{remark}
\label{rem:thm:Lojasiewicz:4}
It is not possible to obtain a uniform bound (i.e. a bound only in terms of $d$, $n$ and possibly the combinatorial parameters) on the constant $c$ in Theorem~\ref{thm:Lojasiewicz}. 
\end{remark}

As mentioned earlier Theorem~\ref{thm:Lojasiewicz} leads to improvements in several applications where {\L}ojasiewicz inequality plays an important role. 
We discuss some of these applications in depth in Section~\ref{sec:applications}, but mention an
important one right away.

\subsection{Application to error bounds}
\label{subsec:error-bounds}
Study of \emph{error bounds} (defined next) is a very important topic in optimization theory and computational optimization (see for example ~\cite{Pang97} and the references cited therein). 

\begin{definition}[Error bounds and residual function]
\label{def:eb}
Let $M,E \subset \R^n$.
An \emph{error bound on $E$ with respect to $M$} is an inequality
\begin{align}\label{eq:error_bound_generic_form}
\distance(x,M)^{\rho} \le \kappa \cdot \psi(x), \qquad \forall x \in E,
\end{align}
where $ \rho, \kappa > 0$, and $\psi:M \cup E \to \R_{\geq 0}$ is some function (called 
a \emph{residual function}) such that $\psi(x)=0$ iff $x \in M$.
\end{definition}
The study of error bounds was motivated by the implementation of iterative numerical optimization algorithms and the proximity of solutions to the feasible or optimal set. Thus, from the optimization point of view, the set $M$ in~\eqref{eq:error_bound_generic_form} can be the feasible set (polyhedron, a slice of the positive semi-definite cone, a basic semi-algebraic set, etc.) or the optimal set of an optimization problem, see~\eqref{poly_optim_def}, $E$ is a set of interest (e.g., iterates of an iterative algorithm or central solutions~\cite{BM22}), and a residual function $\psi$ measures the amount of violation of the inequalities defining $M$ at a given solution of $E$. See~\cite{Pang97} for other applications of error bounds in optimization.

If $M$ and $\psi$ are semi-algebraic and $E$ is a closed and bounded semi-algebraic set, 
then~\eqref{eq:error_bound_generic_form} is a special case of~\eqref{Orig_Lojasiewicz}, 
and by Theorem~\ref{thm:Lojasiewicz}, the error bound~\eqref{eq:error_bound_generic_form} exists with an integer $\rho \ge 1$. 

We prove the following quantitative result.

\begin{theorem}
\label{thm:error_bound_semialg_set}
Let $M$ be a basic closed semi-algebraic set defined by 
\begin{align}\label{eq:basic_semi_error_bound}
M:=\{x \in \R^n \mid g_i(x) \le 0, \ h_j(x) = 0, \ i=1,\ldots,r, \ j=1,\ldots,s\},
\end{align}
where $g_i,h_j \in \R[X_1,\ldots, X_n]_{\le d}$,
and let $E$ be a closed and bounded $\mathcal{P}$-semi-algebraic subset of $\R^n$ 
with $\mathcal{P} \subset \R[X_1,\ldots,X_n]_{\leq d}$.
Let $\psi: M \cup E \rightarrow \R_{\ge 0}$ be the semi-algebraic function defined by 
\begin{align}\label{func:residual_function}
\psi(x)=\sum_{j=1}^s |h_j(x)| + \sum_{i=1}^r \max\{g_i(x),0\}.
\end{align}
Then there exist a positive constant $\kappa$ and an integer $\rho \ge 1$ such that~\eqref{eq:error_bound_generic_form} holds, with $\rho=d^{O(n^2)}$. 

Moreover, if $ \dim M = 0$ (i.e. $M$ is a finite subset of $\R^n$), then 
\eqref{eq:error_bound_generic_form} holds with  
$\rho \le (8d)^{2(n+7)} = 
d^{O(n)}
$.
\end{theorem}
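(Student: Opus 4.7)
The plan is to realize the error bound \eqref{eq:error_bound_generic_form} as an instance of the {\L}ojasiewicz inequality \eqref{Orig_Lojasiewicz} and then invoke Theorem~\ref{thm:Lojasiewicz}. Fix a closed ball $B \subset \R^n$ containing $E$, set $A := E \cup (M \cap B)$, and take $f := \psi|_A$, $g := \distance(\cdot, M)|_A$. All the hypotheses of Theorem~\ref{thm:Lojasiewicz} are then in place: $A$ is closed and bounded and definable using the $h_j$, $g_i$, the polynomials defining $E$, and the quadratic defining $B$, so $A$ is $\mathcal{P}'$-semi-algebraic for some $\mathcal{P}' \subset \R[X_1,\ldots,X_n]_{\le d}$ (using $d \ge 2$); both $f, g$ are continuous and semi-algebraic; and since $\psi$ is a sum of non-negative quantities, $\psi^{-1}(0) = A \cap M = g^{-1}(0)$. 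A resulting inequality of the form $\distance(x,M)^{N} \le c \cdot \psi(x)$ on $A$ immediately yields \eqref{eq:error_bound_generic_form} on $E \subseteq A$ with $\rho = N$. The graph of $\psi$ itself is cut out by atoms of the form $Y - \sum_{j \in J} \epsilon_j h_j - \sum_{i \in I} \delta_i g_i = 0$ together with sign conditions on the $h_j, g_i$, all lying in $\R[X_1,\ldots,X_n,Y]_{\le d}$; their number depends on $r + s$, and here the independence of the bound of Theorem~\ref{thm:Lojasiewicz} from the cardinality of $\mathcal{Q}$ is essential.

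\emph{Zero-dimensional case.} When $M = \{p_1, \ldots, p_m\}$ is finite, the graph of $g = \distance(\cdot, M)$ is the realization of the quantifier-free formula
\[
Y \ge 0 \;\wedge\; \bigvee_{k=1}^m \Bigl( Y^2 - \sum_{i=1}^n (X_i - p_{k,i})^2 = 0 \;\wedge\; \bigwedge_{\ell=1}^m \Bigl( \sum_{i=1}^n (X_i - p_{\ell,i})^2 - Y^2 \ge 0 \Bigr) \Bigr),
\]
all of whose atoms have degree $\le 2 \le d$; their number depends on $m$ but is again irrelevant for Theorem~\ref{thm:Lojasiewicz}. That theorem then yields $\rho \le N \le (8d)^{2(n+7)} = d^{O(n)}$.

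\emph{General case.} When $\dim M > 0$, the graph of $\distance(\cdot, M)$ is a priori only the realization of a first-order formula with $n$ quantifiers over an auxiliary $z \in \R^n$, essentially expressing $Y \ge 0 \wedge Y^2 = \inf_{z \in M} \sum_i (X_i - z_i)^2$, whose atoms are the degree-$d$ polynomials $h_j, g_i$ together with the quadratic $Y^2 - \sum_i (X_i - z_i)^2$. Effective quantifier elimination produces an equivalent quantifier-free description whose atoms lie in $\R[X, Y]_{\le d'}$ with $d' = d^{O(n)}$. Plugging this new degree parameter into Theorem~\ref{thm:Lojasiewicz} yields $\rho \le N \le (8 d')^{2(n+7)} = d^{O(n) \cdot O(n)} = d^{O(n^2)}$.

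The principal difficulty is the quantifier elimination step in the general case: one needs a sufficiently sharp effective QE bound ensuring the degree inflates only to $d^{O(n)}$, as this is precisely what keeps the final exponent at $d^{O(n^2)}$ rather than something worse. A secondary technicality is maintaining compactness of $A$ when $M$ is unbounded, which is handled by intersecting $M$ with the ball $B$ (and correspondingly extending $\mathcal{P}'$ by the defining polynomial of $B$).
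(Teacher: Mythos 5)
Your overall strategy is exactly the one the paper uses (set $f=\psi$, $g=\distance(\cdot,M)$, describe the graph of $\psi$ by sign conditions so that only degree $d$ matters, and invoke Theorem~\ref{thm:Lojasiewicz}, whose independence from the combinatorial parameter is indeed the point), and your zero-dimensional case is complete and matches the paper's. But in the positive-dimensional case there is a genuine gap at precisely the step you flag as ``the principal difficulty'': you assert, without proof, that the graph of $\distance(\cdot,M)$ admits a quantifier-free description with atoms of degree $d'=d^{O(n)}$. The natural first-order description of this graph is \emph{not} a formula with a single block of $n$ existential quantifiers: expressing $Y^2=\inf_{z\in M}\|X-z\|^2$ requires both ``some $z\in M$ attains the value'' and ``every $z'\in M$ gives at least the value,'' i.e.\ two blocks of $n$ variables with a quantifier alternation. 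Applying effective quantifier elimination (Theorem~\ref{14:the:tqe}) to that formula yields degrees $d^{O(n)\cdot O(n)}=d^{O(n^2)}$ — the paper makes exactly this point in the remark following Lemma~\ref{upper_bound_dist_func} — and feeding $d'=d^{O(n^2)}$ into Theorem~\ref{thm:Lojasiewicz} gives only $\rho\le(8d')^{2(n+7)}=d^{O(n^3)}$, not the claimed $d^{O(n^2)}$.

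The missing ingredient is the paper's Lemma~\ref{upper_bound_dist_func}, which obtains the $d^{O(n)}$ degree bound by a different route: first eliminate only the single existential block to describe the set $W=\{(x,t)\mid \exists y\in M,\ t=\|x-y\|\}$ with polynomials $F_1,\dots,F_N$ of degree $d^{O(n)}$; then observe that $\distance(x,M)=\inf W_x$ is a root of some $F_i(x,\cdot)$ and pin it down via its Thom encoding (sign conditions on the derivatives $F_i^{(j)}$ in $T$), using further quantifier elimination only over the \emph{single} variables $T,T'$, which inflates degrees only polynomially, keeping everything at $d^{O(n)}$. The resulting formula has enormous combinatorial size, which is harmless precisely because of the combinatorial independence of Theorem~\ref{thm:Lojasiewicz}. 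Without this (or an equivalent sharp construction), your argument does not reach the stated exponent $d^{O(n^2)}$.
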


\begin{remark}
\label{rem:thm:error_bound_semialg_set}
Example~\ref{ex:exponential_dependence} indicates that 
the upper bound on $\rho$ cannot be better than $d^n$. 
\end{remark}

\begin{remark}\label{rem:arbitrary_residual_functions}
The error bounds of Theorem~\ref{thm:error_bound_semialg_set} have been stated, for the purpose of applications to optimization, only in reference to the basic semi-algebraic set~\eqref{eq:basic_semi_error_bound} and the residual function~\eqref{func:residual_function}. However, the results of Theorem~\ref{thm:error_bound_semialg_set} are still valid if we replace ~\eqref{eq:basic_semi_error_bound} by any $\mathcal{Q}$-semi-algebraic set $M$ and~\eqref{func:residual_function} by any $\mathcal{Q}'$-semi-algebraic residual function, where $\mathcal{Q} \subset \R[X_1,\ldots,X_n]_{\leq d}$ and $\mathcal{Q}' \subset \R[X_1,\ldots,X_n,Y]_{ \leq d^{O(n)}}$, see Corollary~\ref{cor:semi-definite_sys}. \hide{
For instance, one may define 
\begin{align*}
    \psi(x)= \sup_{y \in M'} f(x,y), 
\end{align*}
where $f \in \R[X_1,\ldots,X_n,Y_1,\ldots,Y_{\ell}]_{\le d}$, $\ell =O(n)$, and $M'$ is a closed and bounded  $\mathcal{Q}''$-semi-algebraic set with $\mathcal{Q}'' \subset \R[Y_1,\ldots,Y_{\ell}]_{\leq d}$. Notice that the fact that $M'$ is closed and bounded implies that $\psi$ is continuous.
These results 
should
be compared with~\cite[Corollary~4.4]{LMNP18} and are applicable to error bounds for polynomial semi-infinite and bi-level optimization problems.}
\end{remark}

\hide{
\begin{remark}
If $g_i,h_j \in \mathbb{Z}[X_1,\ldots, X_n]_{\le d}$, then the proof of Theorem~\ref{thm:Lojasiewicz} also allows for deriving an upper bound on $\kappa$, analogous to the one in~\cite[Theorem~7]{S91}.
\end{remark}
}
\noindent
Theorem~\ref{thm:Lojasiewicz} significantly improves the bound $D^{n^{c_1}}$ in~\cite[Theorem~7]{S91} in which $D$ is the sum of degrees of polynomials and $c_1$ is universal positive integer. Furthermore, the upper bound on $\rho$ in Theorem~\ref{thm:error_bound_semialg_set} is independent of $r$ and $s$ (the number of polynomial equations and inequalities in~\eqref{eq:basic_semi_error_bound}), which is particularly important for optimization purposes. Thus, if $r,s = \omega(n^2)$,
the first part of Theorem~\ref{thm:error_bound_semialg_set} improves the best current upper bound~\cite[Corollary~3.8]{LMP15}, where
\begin{align}\label{eqn:best_error_bound}
\rho \le \min\big\{(d+1)(3d)^{n+r+s-1},d(6d-3)^{n+r-1}\big\}.
\end{align}
\noindent
Furthermore, when $M$ is a finite subset of $\R^n$ and $r =\omega(n)$, the second part of Theorem~\ref{thm:error_bound_semialg_set} improves the upper bound~\cite[Theorem~4.1]{LMP15}, where  
\begin{align}\label{eqn:best_error_bound_compact_case}
    \rho \le \frac{(2d-1)^{n+r}+1}{2}.
\end{align}

The improvements mentioned above are particularly relevant to non-linear semi-definite systems, non-linear semi-definite optimization, and semi-definite complementarity problems, see e.g.,~\cite{LP98,GP02}, where the application of~\eqref{eqn:best_error_bound} and~\eqref{eqn:best_error_bound_compact_case} would result in a doubly exponential bound 
since the number of inequalities needed to define the cone of positive semi-definite matrices in 
the vector space $\mathbb{S}^n$ of symmetric $n \times n$ matrices with entries in $\R$
is exponential in $n$. 
The problem of estimation of the exponent $\rho$  in the error bounds of positive semi-definite systems failing the Slater condition~\cite[Page~23]{Kl02} is posed in \cite[Page~106]{LP98} where it is stated 
``Presently, we have no idea of what this exponent ought to be except in trivial cases''. 
Corollary~\ref{cor:semi-definite_sys} quantifies the error bound exponent in~\cite[Proposition~6]{LP98} and gives an answer to this question in the special case of polynomial mappings.

\begin{corollary}\label{cor:semi-definite_sys}
Let $\mathbb{S}_+^p$ be the cone of symmetric $p \times p$ positive semidefinite matrices (with entries in $\R$), let $M$ be defined as
\begin{align*}
    M:=\{X \in \mathbb{S}^p \mid \ g_i(X) \le 0, \ i=1,\ldots,r\},
\end{align*}
where $g_i:\mathbb{S}^p \to \R$ is a polynomial function of degree $d$, and let $E$ be a closed and bounded $\mathcal{P}$-semi-algebraic subset of $\R^{p^2}$ 
with $\mathcal{P} \subset \R[X_1,\ldots,X_{p^2}]_{\leq d}$.
Then there exist $\kappa \in \R $ and $\rho=\max\{d,p\}^{O(p^4)}$ such that
\begin{align*}
\distance(x,M \cap \mathbb{S}_+^p)^{\rho} \le \kappa \cdot \max\Big\{\distance(x,M), \max\{-\lambda_{\min}(x),0\}\Big\},
\quad \mbox{ for all } x \in E.
\end{align*}
\end{corollary}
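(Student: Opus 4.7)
The plan is a direct reduction to Theorem~\ref{thm:Lojasiewicz}: take the claimed right-hand side as the auxiliary function $f$, take $\distance(\cdot,M\cap \mathbb{S}_+^p)$ as $g$, and apply the semi-algebraic {\L}ojasiewicz bound over $A:=E$, which is already closed and bounded.

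The first step is to realise $\mathbb{S}_+^p$ as a basic closed semi-algebraic subset of $\R^{p^2}$ defined by few polynomial inequalities of low degree. Writing the characteristic polynomial as $\det(tI-X)=\sum_{k=0}^p(-1)^k e_k(X)\,t^{p-k}$, each $e_k$ is a polynomial in the entries of $X$ of degree $k$. Since a real symmetric matrix has only real eigenvalues, substituting $t<0$ into the factored form of $\det(tI-X)$ shows that $X\in \mathbb{S}_+^p$ iff $e_k(X)\ge 0$ for $k=1,\ldots,p$. Combined with the defining inequalities of $M$, this gives a description of $M\cap \mathbb{S}_+^p$ by $r+p$ polynomials of maximum degree $\max\{d,p\}$ in $n=p^2$ variables.

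Next, set
\[
f(x):=\max\{\distance(x,M),\, \max\{-\lambda_{\min}(x),0\}\}, \qquad g(x):=\distance(x,M\cap \mathbb{S}_+^p).
\]
Both functions are continuous (distance functions are $1$-Lipschitz, $\lambda_{\min}$ is continuous on symmetric matrices by continuity of roots of the characteristic polynomial, and maxima preserve continuity) and semi-algebraic. One verifies $f^{-1}(0)=g^{-1}(0)=A\cap M\cap \mathbb{S}_+^p$, since $M$ is closed and $\lambda_{\min}(x)\ge 0$ iff $x\in\mathbb{S}_+^p$. Using effective quantifier elimination, as in the estimates underlying Remark~\ref{rem:arbitrary_residual_functions}, applied to the existential/universal formulas defining the two distance functions and $\lambda_{\min}$, the graphs of $f$ and $g$ are $\mathcal{Q}$-semi-algebraic for some $\mathcal{Q}\subset\R[X_1,\ldots,X_n,Y]$ of polynomials of degree at most $\max\{d,p\}^{O(n)}=\max\{d,p\}^{O(p^2)}$.

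Finally, apply Theorem~\ref{thm:Lojasiewicz} to the triple $(A,f,g)$ with effective degree $d':=\max\{d,p\}^{O(p^2)}$ and ambient dimension $n=p^2$. The theorem yields an exponent
\[
\rho\le (8d')^{2(n+7)}=\max\{d,p\}^{O(p^2)\cdot O(p^2)}=\max\{d,p\}^{O(p^4)},
\]
which matches the corollary. The main technical point is the third step: carefully tracking how Tarski--Seidenberg quantifier elimination for the distance-to-set and minimum-eigenvalue formulas inflates degrees by an $n$-fold power, and verifying that the residual $f$ has enough compatibility with $g$ so that the {\L}ojasiewicz hypothesis $f^{-1}(0)\subset g^{-1}(0)$ holds; once $f$ and $g$ are placed inside the $\mathcal{Q}$-semi-algebraic framework, the rest is a mechanical invocation of Theorem~\ref{thm:Lojasiewicz}.
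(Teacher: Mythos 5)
Your proposal is correct and reaches the stated bound, but it diverges from the paper's argument in two ways worth noting. First, you describe $\mathbb{S}_+^p$ by the $p$ sign conditions $e_k(X)\ge 0$ on the characteristic-polynomial coefficients, whereas the paper uses the $2^p-1$ principal-minor inequalities $\det(X_I)\ge 0$; both give polynomials of degree at most $p$, and since the bound of Theorem~\ref{thm:Lojasiewicz} is independent of the combinatorial parameter this difference is immaterial (your description is more economical, the paper's is deliberately profligate to showcase that independence). Second, and more substantively, you place $\max\{-\lambda_{\min}(\cdot),0\}$ directly into the residual function $f$ and invoke Theorem~\ref{thm:Lojasiewicz} once, whereas the paper first proves the error bound with the determinant-based residual $\max\bigl\{\distance(x,M),\max_{I}\max\{-\det(X_I),0\}\bigr\}$ via Theorem~\ref{thm:error_bound_semialg_set} and Remark~\ref{rem:arbitrary_residual_functions}, and only afterwards converts to the eigenvalue form using the interlacing inequality $\det(X_I)\ge r^{|I|-1}\lambda_{\min}(X)$, valid on the bounded set $E$. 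Your route avoids that final interlacing step entirely, at the cost of having to certify that the graph of $\lambda_{\min}$ (hence of $f$) admits a quantifier-free description by polynomials of degree $\max\{d,p\}^{O(p^2)}$; this is routine via Theorem~\ref{14:the:tqe} applied to the formula expressing ``$t$ is the least root of $\det(tI-X)$'' together with Lemma~\ref{upper_bound_dist_func} for the distance terms, but it is the one step you assert rather than carry out, and you should make the degree bookkeeping for $\lambda_{\min}$ explicit. The final arithmetic $(8d')^{2(p^2+7)}$ with $d'=\max\{d,p\}^{O(p^2)}$ giving $\max\{d,p\}^{O(p^4)}$, and the verification of $f^{-1}(0)\subset g^{-1}(0)$, are both correct.
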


\subsection{O-minimal case}
Many finiteness results of semi-algebraic geometry generalize to arbitrary o-minimal expansions
of $\mathbb{R}$ (we refer the reader to \cite{Dries} and \cite{Michel2} for the definition of
o-minimal structures and the corresponding finiteness results). {\L}ojasiewicz inequality does not extend to
arbitrary o-minimal expansions of $\mathbb{R}$ (for example, o-minimal expansions in which the exponential function
is definable). However,  Miller proved in \cite[Theorem 5.4, page 94]{Miller} that in a
\emph{polynomially bounded o-minimal expansion of $\mathbb{R}$}, 
{\L}ojasiewicz inequality holds for every compact definable
set $A$ and definable functions $f,g:A \rightarrow \mathbb{R}$ with $f^{-1}(0) \subset g^{-1}(0)$ (using the same notation as in Theorem~\ref{thm:Lojasiewicz} above).
(An o-minimal expansion of $\mathbb{R}$ is polynomially bounded if for every definable function $f:\mathbb{R} \rightarrow \mathbb{R}$, there exist
$N \in \mathbb{N}, c \in \mathbb{R}$, such that $|f(x)| <  x^N$ for all $x > c$.)
However, it is not possible to give a meaningful quantitative version of Miller's result in such a general context.

\subsubsection{Extension of the notion of combinatorial complexity to arbitrary o-minimal structures}
Although the notion of algebraic complexity in the context of general o-minimal structure does not make sense
in general -- one
can still talk of combinatorial complexity \cite{Basu9}. 
The following result is illustrative (see also Proposition~\ref{prop:qcad} for another example) 
and can be obtained by combining  \cite[Theorem 2.3]{Basu9}
and the approximation theorem proved in \cite{Gabrielov-Vorobjov}.

Fix an o-minimal expansion of the 
$\mathbb{R}$ and suppose that $\mathcal{A}$ is a definable family of closed subsets of $\mathbb{R}^n$. Then there exists 
a constant $C = C(\mathcal{A}) > 0$ having the following property. Suppose that
$\mathcal{S} \subset \mathcal{A}$ is a finite subset and $S$ a subset of $\mathbb{R}^n$ belonging to the Boolean algebra 
of subsets of $\mathbb{R}^n$ generated by $\mathcal{S}$ such that
\begin{equation}
\label{eqn:Betti-o}
\sum_i b_i(S) \leq C \cdot s^n,
\end{equation}
where $s = \card(\mathcal{S})$ and $b_i(\cdot)$ denotes the $i$-th Betti number~\cite{BPRbook2posted}. Notice that this bound does depend on the combinatorial parameter $s$.
Note also that it follows from Hardt's triviality theorem for o-minimal structures \cite{Michel2} that
the Betti numbers of the sets appearing in any definable family are bounded by a constant (depending on the family). 
However, the family of sets $S$ to which the inequality \eqref{eqn:Betti-o} applies is
not necessarily a definable family.

In view of the inequality \eqref{eqn:Betti-o} 
it is an interesting question whether one can prove a quantitative version of Miller's result 
with a uniform bound on the {\L}ojasiewicz exponent in the same setting as above -- so that the bound applies
to a family (not necessarily definable) of definable sets $S$ and functions $f,g$ simultaneously -- as
in inequality \eqref{eqn:Betti-o}.

\subsubsection{{\L}ojasiewicz inequality in polynomially bounded o-minimal structures} 
For the rest of this section we fix a \emph{polynomially bounded}
o-minimal expansion of $\mathbb{R}$. Before stating our theorem we need to use a notation and a definition. 

\begin{notation}
A definable family of subsets of $\mathbb{R}^n$ parametrized by a definable set 
$A$, is a definable subset $\mathcal{A} \subset A \times \mathbb{R}^n$. For $a \in A$, we will denote by
$\mathcal{A}_a = \pi_2(\pi_1^{-1}(a) \cap \mathcal{A})$, where $\pi_1: A \times \mathbb{R}^n \rightarrow A$, $\pi_2:A \times \mathbb{R}^n \rightarrow \mathbb{R}^n$ are the two projection maps. We will often abuse notation and refer to the definable family by $\mathcal{A}$.
\end{notation}

\begin{definition}
Given a definable family $\mathcal{A}$ of subsets of $\mathbb{R}^n$ parametrized by a definable set $A$,
and a finite subset $A' \subset A$,
we call a subset $S \subset \mathbb{R}^n$ to be a $(\mathcal{A},A')$-set if it belongs to the Boolean algebra
of subsets of $\mathbb{R}^n$ generated by the tuple $(\mathcal{A}_a)_{a \in A'}$. 
We will call a subset $S \subset \mathbb{R}^n$, a $\mathcal{A}$-set if $S$
is a $(\mathcal{A},A')$-set for some finite set $A' \subset A$. 
If the graph of a definable function $f$ is  a $\mathcal{A}$-set we will call $f$ a $\mathcal{A}$-function.
(Note that the
family of $\mathcal{A}$-sets is in general \emph{not} a definable family of subsets
of $\mathbb{R}^n$.)
\end{definition}

\begin{example}
\label{eg:sa}
If we take the o-minimal structure $\mathbb{R}_{\mathrm{sa}}$ of semi-algebraic sets,
then for each fixed $d$,
the family of semi-algebraic sets which are  
$\mathcal{P}$-semi-algebraic sets where $\mathcal{P}$ varies over all finite subsets of $\mathbb{R}[X_1,\ldots,X_n]_{\leq d}$
is an example of a family of $\mathcal{A}$-sets for an appropriately chosen $\mathcal{A}$. Note that this
family is not a semi-algebraic family.
\end{example}

Example~\ref{eg:sa} suggests a way to obtain a quantitative {\L}ojasiewicz inequality 
valid over any polynomially bounded o-minimal structure.

\begin{theorem}[{\L}ojasiewicz inequality for $\mathcal{A}$-sets and $\mathcal{B}$-functions for any pair of 
definable families $\mathcal{A}$ and $\mathcal{B}$]
\label{thm:uniform}
Let $\mathcal{A}$ be a definable family of subsets of $\mathbb{R}^n$ parametrized by the definable set $A$,
and let $\mathcal{B}$ be a definable family of subsets of $\mathbb{R}^{n+1}$ parametrized by the definable set $B$.

Then there exists $N = N(\mathcal{A},\mathcal{B}) > 0$ having the following property.
For any triple of finite sets
$(A',B',B'')$ with $A' \subset A, B',B'' \subset B$, 
there exists $c = c(A,B',B'') \in \mathbb{R}$ such that 
for each closed and bounded $(\mathcal{A},A')$-set $S$, 
a $(\mathcal{B},B')$-set $F$, and a $(\mathcal{B},B'')$-set $G$, such that $F,G$ are graphs of
definable functions $f,g:\mathbb{R}^n \rightarrow \mathbb{R}$ continuous on $S$ with 
$f|_S^{-1}(0) \subset g|_S^{-1}(0)$, 
and for all $x \in S$, 
\[
|g(x)|^N \leq c \cdot |f(x)|.
\]
\end{theorem}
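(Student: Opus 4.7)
The plan is to apply Miller's {\L}ojasiewicz theorem \cite[Theorem~5.4]{Miller} parametrically to a sequence of definable families---one for each triple of cardinalities $(k,\ell,m)$---and then upgrade to a bound uniform in the cardinalities via a local-complexity principle that mirrors the combinatorial-independence feature of Theorem~\ref{thm:Lojasiewicz}.

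First, for each fixed triple $(k,\ell,m) \in \mathbb{N}^3$, I would view the class of configurations satisfying $|A'|=k$, $|B'|=\ell$, $|B''|=m$ as a definable family of triples $(S,F,G)$ parametrized by a definable subset of $A^k \times B^\ell \times B^m$ together with a finite ``combinatorial type'' indexing the Boolean combinations used to build each set. Applied pointwise, Miller's theorem assigns to each such triple a {\L}ojasiewicz exponent which, in a polynomially bounded o-minimal structure, is a definable function of the parameters taking values in $\mathbb{Q}$. By the o-minimal finiteness of the image of a definable function into the discrete set $\mathbb{Q}$, this function is bounded, yielding an exponent $N_0(k,\ell,m)$ valid uniformly over the family.

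Second, I would upgrade to uniformity in $(k,\ell,m)$ via the following local-complexity principle: for any definable family $\mathcal{A}$ there exists a constant $C=C(\mathcal{A})$ such that the germ at every point $x \in \mathbb{R}^n$ of an arbitrary $(\mathcal{A},A')$-set agrees with the germ at $x$ of some $(\mathcal{A},A'')$-set with $|A''|\leq C$. This is the germ-theoretic analog of the Betti-number bound \eqref{eqn:Betti-o} and should follow from o-minimal cell decomposition combined with Hardt's triviality in the o-minimal setting. Applying the same principle to $\mathcal{B}$, the local {\L}ojasiewicz behavior of $(S,f,g)$ at any $x \in S$ is governed by a triple drawn from the bounded-cardinality family indexed by at most $C$ parameters from each of $A,B$; by Step~1 the resulting local exponent is bounded by $N_0(C,C,C)$. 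A finite covering of the compact set $S$ coming from a cell decomposition compatible with the relevant sets $\mathcal{A}_a$ and $\mathcal{B}_b$ then patches these local bounds into a global $N = N(\mathcal{A},\mathcal{B})$.

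The main obstacle is establishing the local-complexity principle of Step~2. In the semi-algebraic case it is an implicit consequence of the degree-and-dimension-only dependence in Theorem~\ref{thm:Lojasiewicz}, which is ultimately driven by effective elimination with cardinality-independent degree bounds. In an arbitrary polynomially bounded o-minimal expansion one must substitute for elimination the intrinsic finiteness of o-minimal germ types, and the care lies in showing that---after a definable stratification of $A$ and $B$---the germ at a point $x$ in any $(\mathcal{A},A')$-set is determined by only $O(1)$ of the parameters in $A'$, uniformly in $A'$.
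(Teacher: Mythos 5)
Your Step 2 hinges on a ``local-complexity principle'' that is false as stated. Take $n=2$ and let $\mathcal{A}$ be the definable family of lines through the origin, parametrized by their directions. For $A'=\{a_1,\ldots,a_k\}$ with distinct directions, the $(\mathcal{A},A')$-set $S=\bigcup_i \mathcal{A}_{a_i}$ has, as its germ at $0$, the union of $k$ distinct line-germs. Any $(\mathcal{A},A'')$-set with $\card(A'')\le C$ is a Boolean combination of $C$ lines through the origin, and its germ at $0$ is either a union of at most $C$ line-germs or contains an open sector; for $k>C$ it can never equal the germ of $S$. So no constant $C(\mathcal{A})$ with the property you assert exists, and the reduction to bounded-cardinality configurations collapses. (Step 1 also has a soft spot: in a general polynomially bounded structure the field of exponents need not be $\mathbb{Q}$, and a definable exponent function into $\mathbb{R}$ need not have finite or even bounded image, so ``discreteness of $\mathbb{Q}$'' cannot be invoked; the correct tool is Miller's uniform growth estimate for \emph{definable families} of one-variable functions, his Proposition~5.2.)

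The paper's proof places the bounded-complexity requirement elsewhere, where it is actually available. Proposition~\ref{prop:qcad} (from \cite{Basu9}, equivalent to distality of o-minimal structures) says: for every finite $A'$ there is a cylindrical definable decomposition compatible with all $\mathcal{A}_a$, $a\in A'$, \emph{each individual cell} of which is a fiber of one of finitely many fixed definable families over an $N(n)$-tuple of elements of $A'$, with $N(n)$ independent of $\card(A')$. Note this bounds the complexity of one cell, not of the germ of an arbitrary Boolean combination, which is exactly the distinction your counterexample illustrates (each line is a single cell of bounded complexity; their union at $0$ is not). The proof then follows \cite[Proposition~2.6.4]{BCR98}: introduce $v(x,u)=\sup\{|g(y)| : y\in S,\ \|y-x\|\le 1,\ u|f(y)|=1\}$, realize the graph of $v$ as a Boolean combination of projections of definable sets built from $\mathcal{A},\mathcal{B}$, and observe that for each $x$ the tail of the one-variable function $v(x,\cdot)$ is carried by a \emph{single} cell of the decomposition, hence lies in one fixed definable family of functions depending only on $(\mathcal{A},\mathcal{B})$. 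Miller's Proposition~5.2 then gives a uniform $p$ with $|v(x,u)|\le c(x)u^p$, and $N=p+1$ works; no localization of $S$ or patching over a cover is needed. If you want to salvage your outline, you must replace the germ-level principle by this cell-level one.
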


\begin{remark}[Theorem~\ref{thm:uniform} generalizes Theorem~5.4 (page 94) in~\cite{Miller}]
Notice that as in Theorem~\ref{thm:Lojasiewicz}, the 
combinatorial parameter (namely, $\card(A' \cup B' \cup B'')$) plays no role. It is also more general than the 
{\L}ojasiewicz inequality in \cite{Miller} as the inequality holds with the same value of $N$ for a large,
potentially infinite family (not necessarily definable)  of triples $(S,f,g)$ and not just for one triple
as in the result in \cite{Miller}.
\end{remark}

\subsection{Outline of the proofs of the main theorems}
\label{subsec:proofs:outline}
We now outline the key ideas behind the proofs of the theorems stated in the previous section.

Our proof of Theorem~\ref{thm:Lojasiewicz} follows closely the proof of the similar qualitative statement in
\cite{BCR98} with certain important modifications. One main tool that we use to obtain our quantitative bound
is a careful analysis of the degrees of certain polynomials appearing in the output of an algorithm 
(Algorithm 14.6 (Block Elimination)
described in the book \cite{BPRbook2posted}. 
\footnote{We refer to the posted online version of the book because it
contains certain degree estimates which are more precise than in the printed version.}
This algorithm is an intermediate algorithm for the effective quantifier elimination algorithm described in
\cite{BPRbook2posted}, and takes as input a finite set of polynomials $\mathcal{P} \subset \R[Y,X]$, and 
produces as output a finite set $\BElim_X(\mathcal{P}) \subset \R[Y]$ having the property that
for each connected component $C$ of the realization of each realizable sign condition (see Notation~\ref{not:sign-condition}) the set of sign conditions realized by $\mathcal{P}(y,X))$ is constant
as $y$ varies over $C$.
The precise mathematical statement describing the above property of the output of the Block Elimination algorithm  (including a
bound on the degrees of the polynomials output) is summarized  in Proposition~\ref{prop:BElim}. 
The proof of the bound on degrees borrows heavily from the complexity analysis of the algorithm that already appears in \cite{BPRbook2posted} but with an added part corresponding to the last step of the algorithm. This last step uses another algorithm (namely, Algorithm 11.54 (Restricted Elimination) in \cite{BPRbook2posted}) and we use the complexity analysis of this algorithm as well.

We also need a quantitative statement on the growth of a semi-algebraic function of one variable at infinity whose graph is defined by polynomials of a given degree. It is important for us that the growth is bounded only by the
upper bound on the degree and not on the size of the formula describing the graph. This is proved in Lemma~\ref{lem:2dim}.

Note that the technique of utilizing complexity estimates of algorithms to prove 
quantitative bounds in real algebraic geometry is not altogether new. For example, similar ideas have been used to prove a quantitative curve selection lemma \cite{BR2021}, 
and bounds on the radius of a ball guaranteed to intersect
every connected component of a given semi-algebraic set \cite{BR10}, amongst other such results.

Theorem~\ref{thm:error_bound_semialg_set} is an application of Theorem~\ref{thm:Lojasiewicz} with 
one key extra ingredient. We prove (see Lemma~\ref{upper_bound_dist_func})
that if $M$ is a $\mathcal{P}$-semi-algebraic set with
$\mathcal{P} \subset \R[X_1,\ldots,X_n]_{\leq d}$, then the graph of the distance function, $\dist(\cdot,M)$, can be described by 
a quantifier-free formula involving polynomials having degrees at most $d^{O(n)}$. A more naive approach 
(for example that in \cite{S91})
involving quantifier elimination would involve elimination of two blocks of quantified variables with a quantifier alternation
and would lead to a degree bound of $d^{O(n^2)}$. The formula describing the graph of $\dist(\cdot,M)$ that we obtain is very large 
from the point of view of combinatorial complexity (compared to the more naive approach), but with a better degree
bound. We leverage now the fact that our bound on the {\L}ojasiewicz exponent is independent of the combinatorial parameter, and apply Theorem~\ref{thm:Lojasiewicz} to obtain the stated result.

The proof of Theorem~\ref{thm:uniform} is similar to that of proof Theorem~\ref{thm:Lojasiewicz} with the
following important difference. Proposition~\ref{prop:BElim} which plays a key role in the proof of  Theorem~\ref{thm:Lojasiewicz} is replaced by a quantitative version of the existence theorem for cylindrical definable decomposition
adapted to finite sub-families of a family $\mathcal{F}$  of definable subsets of $\R^n$ in any o-minimal
structure.
The important quantitative property that
we need is not the size of the decomposition but the fact that each cell of the decomposition is determined in
a certain fixed definable way from a certain finite number, $N(n)$, of the sets  of the given finite sub-family of $\mathcal{F}$ (the key point being that the number $N(n)$ is independent of the cardinality of the finite sub-family).
The existence of such decompositions in o-minimal structures was first observed in \cite[Theorem 2.5]{Basu9} 
(see Proposition~\ref{prop:2.6.4:o} below)
and it is closely related (in fact equivalent)
to the fact that o-minimal structures are \emph{distal} in the sense of model theory (see \cite{Starchenko-Bourbaki}). 

The rest of the paper is organized as follows. In Section~\ref{sec:prior} we survey prior work on 
proving bounds on the {\L}ojasiewicz exponent at various levels of generality,
and also survey prior work  on proving error bounds.
In Section~\ref{sec:comb-alg}, in order to put the current paper in context, 
we include a discussion of the role that the separation of combinatorial and algebraic
complexity has played in quantitative real algebraic geometry. 
In Section~\ref{sec:proofs} we prove the main theorems after introducing the necessary definitions and preliminary results.
In Section~\ref{sec:applications}, we discuss some further applications of our main theorem. Finally, in
Section~\ref{sec:conclusion} we end with some open problems.

\section{Prior and related work}
\label{sec:prior}

\subsection{Prior results on {\L}ojasiewicz inequality}
\label{subsec:prior-sa}
Solern\'o~\cite[Theorem~3]{S91} proved that~\eqref{Orig_Lojasiewicz} holds with $\rho = D^{c_1 n}$, in which $c_1$ is a universal constant and $D$ is an upper bound on the sum of the  degrees of polynomials in $\mathcal{P}$ and $\mathcal{Q}$. Since $D$ is an upper bound on the sum of the  degrees of polynomials in $\mathcal{P}$ and $\mathcal{Q}$, the bound in \cite[Theorem~3]{S91} depends implicitly on the cardinalities of
$\mathcal{P}$ and $\mathcal{Q}$ (unlike Theorem~\ref{thm:Lojasiewicz}).
In the case of polynomials with integer coefficients,  
Solern\'o~\cite[Theorem~3 (ii)]{S91} also proves an upper bound of
$2^{\tau{D^{c_2 \cdot n^2}}}$ on the constant $c$ (following the same notation as in \eqref{Orig_Lojasiewicz}) where $D$ is an upper bound on the sum of the  degrees of polynomials in $\mathcal{P}$ and $\mathcal{Q}$ and $c_2$ is a universal constant. This bound should be compared with inequality~\eqref{eqn:thm:Lojasiewicz:1} in Theorem~\ref{thm:Lojasiewicz}.

\vspace{5px}
\noindent
In a series of papers~\cite{KS14,KS16,KSS19} 
Kurdyka, Spodzieja and Szlachci{\'{n}}ska
proved several quantitative results on {\L}ojasiewicz inequality. We summarize them as follows.
Let $S \subset \mathbb{R}^n$ be a closed semi-algebraic set, and let 
\begin{align*}
    S = S_1 \cup \cdots \cup S_k
\end{align*}
be a decomposition~\cite{BCR98} of $S$ into $k$ closed basic $\mathcal{P}_i$-semi-algebraic subsets $S_i$ with $\mathcal{P}_i \subset \mathbb{R}[X_1,\ldots,X_n]_{\le d_i}$, each involving $r_i$ polynomial inequalities. Let $r(S)$ be the minimum of $\max\{r_1,\ldots,r_k\}$ over all possible decompositions of $S$ and let $\deg(S)$ denote the minimum of $\max\{d_1,\ldots,d_k\}$ over all decompositions for which $r_i \le r(S)$. Further, let $F: S \to \mathbb{R}^s$ be a continuous semi-algebraic mapping and suppose that $\mathbf{0} \in S$ and $F(\mathbf{0})= \mathbf{0}$. Then for every $\varepsilon > 0$ there exist~\cite[Corollary~2.2]{KS16} (see also~\cite{KSS19}) $c > 0$ and an integer
\begin{align}\label{Kurdyka_upper_bound}
1 \le \rho \le \bar{d}(6\bar{d}-3)^{n+s+\bar{r}-1}
\end{align}
such that  
\begin{align}\label{ineq:Kurdyka_bound}
    \|F(x)\| \ge c \cdot \distance\big(x,F^{-1}(\mathbf{0}) \cap S\big)^{\rho}, \quad \forall x \in S, \ \|x\| < \varepsilon,
\end{align}
where
\begin{align*}
\bar{d}&=\max\big\{\deg(S),\deg(\mathrm{graph}(F))\big\},\\
\bar{r} &= r(S) + r(\mathrm{graph}(F)).
\end{align*}
If $x=\mathbf{0}$ is an isolated zero of $F$, then $\rho \le ((2\bar{d}-1)^{n+s+\bar{r}}+1)/2$. 

Note that the above  bounds do depend on the number of polynomials.
Also, notice that $\bar{d}$ and $\bar{r}$ in the upper bound~\eqref{Kurdyka_upper_bound} are both different from $d$ and the number of inequalities in the semi-algebraic description of $f$, $g$, and $A$ in Theorem~\ref{thm:Lojasiewicz}. In fact, $\bar{r}$ and $\bar{d}$ is the number of inequalities and the maximum degree of polynomials in the minimal semi-algebraic description of $\mathrm{graph}(F)$ and $S$. It was proved in~\cite{Br91} that $\bar{r}$ is bounded by $n(n+1)/2$, but it is not clear how $d$ blows up for a minimal decomposition. 
Because of this we cannot directly compare the bound in~\eqref{Kurdyka_upper_bound} to 
that of Theorem~\ref{thm:Lojasiewicz} proved in the current paper.

\hide{
See also~\cite{RS11} for the local {\L}ojasiewicz exponent of a complex polynomial mapping.}

Let $f:S \to \mathbb{R}$ be a Nash function~\cite[Definition~2.9.3]{BCR98}, where $S$ is a compact semi-algebraic subset of $\mathbb{R}^n$. Osi\'nska-Ulrych et al.~\cite{OSS21} showed that 
\begin{align*}
    |f(x)| \ge c \cdot \distance\big(x,f^{-1}(0)\big)^{2(2\bar{d}-1)^{3n+1}}, \quad \forall x \in S,
\end{align*}
in which $\bar{d}=\deg_S(f):=\max\{\deg_a(f) \mid a \in S\}$, and $\deg_a(f)$ is the degree of the unique irreducible $P \in \mathbb{R}[X_1,\ldots,X_n,Y]$ such that $P(x,f(x))=0$ for all $x$ in a connected neighborhood of $a$.

\hide{
If $f$ is real analytic in a neighborhood of $0 \in \mathbb{R}^n$ with $f(0) = 0$ and $\nabla f(0) = 0$, Pham~\cite{P12} showed that there exist $c,\varepsilon > 0$ $\distance(x,f^{-1}(0))^{\varrho} \le c \cdot |f(x)|$ for all $\|x\| \le r$, where
\begin{align*}
    \varrho \le \max\{d(3d-4)^{n-1},2d(3d-3)^{n-2}\}.
\end{align*}
}
Koll\'ar \cite{K99} considered the problem of improving Solern\'o's results \cite{S91}. 
He obtained significant improvements but under certain restrictions.
More precisely, 
given a semi-algebraic set $M$ as in~\eqref{eq:basic_semi_error_bound}, 
with $\max_i\{f_i(x)\} > 0$ for all $x \in M$ for $0 < \|x\| \ll 1$, 
and $\max_i\{f_i(\mathbf{0})\} = 0$,
he proved that (\cite[Theorem~4]{K99})  there exist constants $c,\varepsilon > 0$ such that 
\begin{align}\label{Kollar_Lojasiewicz}
\max_i\{f_i(x)\} \ge c \cdot \|x\|^{B(n-1)d^n} \ \ \text{for all} \ x \in M \ \text{with} \ \|x\| < \varepsilon, 
\end{align}
where $B(n):=\binom{n}{\lfloor (n/2) \rfloor}$.
Notice that the bound $B(n-1)d^n \leq (2 d)^n$ on the exponent in \eqref{Kollar_Lojasiewicz} is a little better than the bound in Theorem~\ref{thm:Lojasiewicz}.
It is also the case that similar to our result,  Koll\'ar's bound is independent of the combinatorial parameters
(i.e. number of polynomials occurring in the definition of $M$ and the number of $f_i$'s). However, 
unlike Theorem~\ref{thm:Lojasiewicz}, the pair of functions
$\max_i\{f_i(x)\}, ||\cdot||$ in Koll\'ar's theorem is quite restrictive and so inequality
\eqref{Kollar_Lojasiewicz} is difficult to apply directly -- for instance, in applications to error bounds
considered in this paper (Theorem~\ref{thm:error_bound_semialg_set}). Moreover, the restriction that
$\mathbf{0}$ has to be an isolated zero of  $\max_i\{f_i(x)\}$ may not be satisfied in many applications, restricting
the utility of  \eqref{Kollar_Lojasiewicz}.

More recently, Osi\'nska-Ulrych et al.~\cite{OSS21b} proved that $\mathcal{L}(f,g \mid \mathbb{B}^n) \le \bar{d}^{4n+1}$ in which $\bar{d}$ is the degree of polynomials describing $f$ and $g$,
$\mathbb{B}^n$ is the unit ball in $\mathbb{R}^n$,
and $\bar{d}$ is a bound on the degrees of polynomials defining the graphs of $f,g$ as well as on certain
polynomials giving a suitable semi-algebraic decomposition of $\mathbb{B}^n$ adapted to $f,g$. 
Note that $\bar{d}$ could be larger than the degrees of the polynomials defining $f,g$.
This bound is also independent of the combinatorial parameters
but asymptotically weaker than the one in Theorem~\ref{thm:Lojasiewicz}, and the setting is more restrictive
(since the bound is not directly in terms of the degrees of the polynomials appearing in the definition of $f,g$).

\subsection{Other forms of {\L}ojasiewicz inequality}
Several other  forms of {\L}ojasiewicz inequality have appeared in the literature. Let $f:U \to \mathbb{R}$ be a real analytic function, where $U \subset \mathbb{R}^n$ is neighborhood of $\mathbf{0} \in \mathbb{R}^n$. If $f(\mathbf{0}) = 0$ and $\nabla f(\mathbf{0}) = \mathbf{0}$, then there exist a neighborhood $U'$ of $0$, a rational number $\varrho < 1$, and $c >0$ such that
\begin{equation}\label{gradLojasiewicz}
    |\nabla f(x)| \ge c \cdot|f(x)|^{\varrho}, \quad \forall x \in U',
\end{equation}
which is known as \textit{{\L}ojasiewicz gradient inequality}. The infimum of $\varrho$ satisfying~\eqref{gradLojasiewicz} is called the {\L}ojasiewicz exponent of $f$, and it is denoted by $\varrho(f)$. If $f\in \mathbb{R}[X_1,\ldots,X_n]_{\le d}$ and has a isolated singularity at zero, then there exists an upper bound~\cite{G99}
\begin{align*}
\varrho(f) \le 1-\frac{1}{(d-1)^n+1}.
\end{align*}
Under a weaker condition of having non-isolated singularity at the origin, D'Acunto and Kurdyka showed~\cite{DK05} that 
\begin{align}\label{eqn:Kurdyka_gradient_ineq}
\varrho(f) \le 1-\frac{1}{\max\{d(3d-4)^{n-1},2d(3d-3)^{n-2}\}}.
\end{align}

\vspace{5px}
\noindent
A more general result is given by~\cite{OSS21} for a Nash function $f:U \to \mathbb{R}$, where $U$ is a connected neighborhood of $\mathbf{0} \in \mathbb{R}^n$. If $f(\mathbf{0}) = 0$ and $\nabla f(\mathbf{0}) = \mathbf{0}$, then~\eqref{gradLojasiewicz} holds with 
\begin{align*}
\varrho(f) \le 1-\frac{1}{2(2d-1)^{3n+1}},
\end{align*}
where $d$ is the degree of the unique irreducible $P \in \mathbb{R}[X_1,\ldots,X_n,Y]$ such that $P(x,f(x))=0$ for all $x \in U$. If, in addition to the latter condition, $\frac{\partial P}{\partial y} (x, f (x)) \neq  0$ for all $x \in U$, then there exists a stronger upper bound   
\begin{align*}
\varrho(f) \le 1-\frac{1}{\max\{2d(2d-1),d(3d-2)^n\} +1} .
\end{align*}

\hide{
\vspace{5px}
\noindent
The complex case can be easier since polynomials have more complex zeros than reals. Let $V:=\zero(f_1,\ldots,f_m)$ be the zero set of polynomials $f_1,\ldots,f_m \in \mathbb{C}[X_1,\ldots,X_n]_{\le d}$. Then there exits~\cite{B88} a constant $c>0$ such that
\begin{align*}
    \min\{\distance(x,V),1\}^{\rho} \le c \cdot \max_i\{|f_i(x)|\},
\end{align*}
where $\rho = (n+1)^2d^{\min\{n,m\}}$. Note that the inequality~\eqref{Orig_Lojasiewicz} can be applied to this case by considering $V$ as a semi-algebraic subset of $\mathbb{R}^{2n}$. Let $f_1,\ldots,f_m \in \mathbb{K}[X_1,\ldots,X_n]_{\le d}$, where $\mathbb{K}$ is an algebraically closed field of any characteristic, and let the distance on $V$ be defined by 
\begin{align*}
    \distance(x,V)=\inf_{y \in V} \|x-y\|,
\end{align*}
where
\begin{align*}
    \|(x_1,\ldots,x_n)\|:=\sqrt{|x_1|^2+\ldots + |x_n|^2}
\end{align*}
in which $|\cdot|:\mathbb{K} \to [0,\infty)$ is a valuation satisfying the triangle inequality. Then there exists~\cite[Theorem~5]{JKS92} a constant $c > 0$ such that 
\begin{align*}
    \distance(x,V)^{\rho} \le c \cdot \max_i\{|f_i(x)|\},
\end{align*}
where $\rho = O(2^m d_1d_2\ldots d_m)$ for all $x$ in a compact subset of $\mathbb{K}^n$. See also~\cite[Theorem~3.2]{G95} for an estimate of {\L}ojasiewicz exponent for Pffafian equations.
}

\subsection{Prior work on error bounds}
\label{subsec:prior-error-bounds}
\hide{
The error bound~\eqref{eq:error_bound_generic_form} is called Lipschitzian if $\rho =1$, and H\"{o}lderian otherwise. In general, a Lipschitzian error bound may fail to exist, as can be seen in Example~\ref{ex:exponential_dependence} (even if $S$ is convex, a Lipschitzian error bound may fail to exist, see~\cite[Example~3]{LP98}). See also~\cite[Examples~1 and~2]{LP98} for cases where an error bound, even a H\"{o}lderian one, fails to hold for $E = \mathbb{R}^n$.

The convexity in error bounds has been fully exploited in the literature. If we let $\R=\mathbb{R}$ and $M$ be convex defined by quadratic inequalities%
\footnote{There are also Lipschitzian error bounds for convex systems where $g_i$ and $h_j$ are convex differentiable but not polynomials, see e.g.,~\cite{M85,R75}.}, i.e., $s = 0,$ then a H\"olderian error bound exists with $E=\mathbb{R}^n$ and $\rho \le 2^{r+1}$~\cite[Theorem 3.1]{WP94}. A Lipschitzian error bound is obtained~\cite{Hoff52,LL94} if there exists $x^{\circ} \in \mathbb{R}^n$ satisfying 
\begin{align}\label{Slater}
g_i(x^{\circ}) < 0, \quad i=1,\ldots,r, \qquad h_j(x^{\circ}) = 0, \quad j=1,\ldots,s.
\end{align}
\noindent
In case that the nonempty set $M$ is defined by linear inequalities only, a stronger error bound exists with $\rho=1$ and $E=\mathbb{R}^n$~\cite{Hoff52}.
} 

\vspace{5px}
\noindent
Error bounds were generalized to analytic systems and basic semi-algebraic sets in~\cite[Theorem 2.2]{LP94} and~\cite[Theorem 2.2]{LL94} based on the analytic form of {\L}ojasiewicz inequality and H\"ormander's results~\cite{H58} but without explicit information about the exponent. Recently, an explicit upper bound with respect to $M$, defined in~\eqref{eq:basic_semi_error_bound}, was given in~\cite{LMP15} which depends exponentially on the dimension and the number of polynomial equations and inequalities, see~\eqref{eqn:best_error_bound}. The upper bound~\eqref{eqn:best_error_bound} follows from the bound~\eqref{eqn:Kurdyka_gradient_ineq} and the generalized differentiation in variational analysis. 

\vspace{5px}
\noindent
\hide{There are special cases of $M$ which yield a stronger lower bound than~\eqref{eqn:best_error_bound} and Theorem~\ref{thm:error_bound_semialg_set}. As a nontrivial special case of a semi-algebraic set, a linear matrix inequality system in conic form is defined as $M=\{y \in \mathbb{R}^m \mid C-\sum_{i=1}^m A^i y_i \succeq 0\}$, where $C,A^i$ for $i=1,\ldots,m$ are symmetric matrices. If $\interior(\{y \mid -\sum_{i=1}^m A^i y_i \succeq 0\}) \neq \emptyset$ (which also implies $\interior(M) \neq \emptyset$), then there exists~\cite[Corollary~2.1]{DH99} $\kappa > 0$ such that
 \begin{align*}
 \distance(y, M) \le \kappa \cdot \big(\max\{-\lambda_{\min}(C- \sum_{i=1}^n A^i y_i ),0\}\big), \qquad \forall y \in \mathbb{R}^m,
 \end{align*}  
see also~\cite[Proposition~5]{LP98}. More generally, even if $\interior(M) = \emptyset$, for every compact set $E \subset \mathbb{R}^m$ there exist~\cite[Theorem 3.3]{ST2000} $\kappa, \rho > 0$ such that
 \begin{align}\label{error_bound_SDO}
 \distance(y, M)^{\rho} \le \kappa \cdot \big(\max\{-\lambda_{\min}(C- \sum_{i=1}^n A^i y_i ),0\}\big), \qquad \forall y \in E,
 \end{align}
where $\rho \le 2^{n-1}$.} 

\vspace{5px}
\noindent
In case that $M$ is defined by a single convex polynomial inequality, i.e., $r=1$, $s=0$, then~\eqref{eq:error_bound_generic_form} holds with $\rho \le (d-1)^n +1$~\cite[Theorem~4.2]{Li10}. Additionally, if there exists an $x \in M$ such that $g(x) < 0$, then $\rho = 1$ with $E=\mathbb{R}^n$~\cite[Theorem~4.1]{Li10}. More generally, there exists~\cite[Corollary~3.4]{BLY14} 
\begin{align}\label{upper_bound_convex_semi-algebraic}
\rho \le \min\Bigg\{\frac{(2d-1)^n+1}{2}, \binom{n-1}{[(n-1)/2]} d^n\Bigg\}    
\end{align}
such that the error bound~\eqref{eq:error_bound_generic_form} holds, where
\begin{align*}
\psi(x)= \max_{i \in \{1,\ldots,r\}} (\max\{g_i(x),0\}).
\end{align*}

\vspace{5px}
\noindent
A complete survey of error bounds in optimization and their applications to algorithms and sensitivity analysis can be found in~\cite{LP98,Pang97}.

\section{Combinatorial and algebraic complexity}
\label{sec:comb-alg}
A key feature of the bound in Theorem~\ref{thm:Lojasiewicz} is that it is \emph{independent of the
cardinality of $\mathcal{P}$ and $\mathcal{Q}$} and depends only on the bound on the maximum degree
of the polynomials in $\mathcal{P} \cup \mathcal{Q}$ and $n$.
In fact in many quantitative results (upper bounds on various quantities) in real algebraic geometry involving a $\mathcal{P}$-semi-algebraic set a fruitful distinction can be made between the dependence of the bound
on the cardinality of the set $\mathcal{P}$ and on the maximum degrees (or some other measure of the complexity) of the polynomials in $\mathcal{P}$. The former is referred to as the \emph{combinatorial part} and the latter as the
\emph{algebraic part} of the bound (see \cite{B17}). This distinction is important in many applications (such as in discrete and computational geometry) where the algebraic part of the bounds are treated as bounded by a fixed constant and only the combinatorial part is considered interesting.  

The following examples illustrate the different nature of the dependencies
on the combinatorial and the algebraic parameters in quantitative bounds appearing in real algebraic geometry
and put in context the key property of Theorem~\ref{thm:Lojasiewicz} stated in the beginning of this subsection. 

\begin{enumerate}[1.]
    \item (Bound on Betti numbers.)  Suppose that $S \subset \R^n$ is a $\mathcal{P}$- semi-algebraic set and $V = \ZZ(Q,\R^n)$
    a real algebraic set. Suppose that $\dim_\R V = p$, and the degrees of $Q$ and the polynomials in $\mathcal{P}$ is bounded by $d$. Then,
    \begin{equation}
    \label{eqn:Betti}
    \sum_i b_i(S \cap V) \leq s^p (O(d))^n, 
    \end{equation}
    where $s = \card(\mathcal{P})$ and $b_i(\cdot)$ denotes the $i$-th Betti number \cite{BPRbook2posted}. 
    Notice the different dependence of the bound on $s$ and $d$. 
    
    \item (Quantitative curve selection lemma.)
    The curve selection lemma \cite{Loj1,Loj3} (see also \cite{Milnor3}) is a fundamental result in semi-algebraic geometry. 
    \hide{
    \begin{theorem*}[Curve Selection Lemma]
\label{csl}
  Let $S \subset \R^k$ be a semi-algebraic set and $x \in \bar{S}$. Then there
  exist a positive element $t_0$ of $\R$, and a semi-algebraic path $\varphi$ from
  $[0, t_0)$ to $\R^k$ such that $\varphi (0) = x$ and $\varphi ((0, t_0)) \subset S$.
\end{theorem*}
}
The following quantitative version of this lemma was proved in \cite{BR2021}. 
    
    \begin{theorem*}[Quantitative Curve Selection Lemma]
\label{qcsl}
  Let  $\mathcal{P} \subset \R[X_1,\ldots,X_n]_{\leq d}$ be a finite set, 
  $S$ a $\mathcal{P}$-semi-algebraic set,  and $x \in \overline{S}$.
  Then, there exist $t_0\in \R, t_0 > 0$,
  a semi-algebraic path $\varphi: [0,t_0) \rightarrow \R^n$ with  
  \[
  \varphi(0) = x, \ \varphi ((0,t_0)) \subset S,
  \]
  such that the degree of the Zariski closure of the image of $\varphi$ is bounded by
  \[
  (O(d))^{4n+3}.
  \]
\end{theorem*}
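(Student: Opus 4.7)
The plan is to build $\varphi$ by combining the critical-point method with infinitesimal deformation, and to control the algebraic degree of its image through a Block Elimination argument of the same style as Proposition~\ref{prop:BElim}. The reason one can hope for a bound independent of $\card(\mathcal{P})$ is exactly the one exploited in Theorem~\ref{thm:Lojasiewicz}: Block Elimination produces output whose degrees depend only on the degree of the input polynomials and on the number of variables to be eliminated, not on the cardinality of the input set.

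The first step is to pass to the real closed extension $\R\la \eps \ra$ by a single infinitesimal $\eps > 0$ and to consider the set $S' = \E(S, \R\la \eps \ra) \cap \overline{B(x,\eps)}$, where $B(x,\eps)$ is the ball of radius $\eps$ about $x$. Because $x \in \overline{S}$, the set $S'$ is nonempty, and each of its nonempty components is contained in the realization of some sign condition on $\mathcal{P}$ whose closure contains $x$. For a generic linear form $\ell$ on $\R^n$, one selects, inside each such realization, a critical point $y$ of $\ell$ restricted to $\partial B(x,\eps)$ via the associated Lagrange equations. Expanding the coordinates of $y$ in Puiseux series in $\eps$ and using $t = \eps$ as the uniformizing parameter produces a semi-algebraic path $\varphi: [0,t_0) \to \R^n$ with $\varphi(0) = x$ and $\varphi((0,t_0)) \subset S$, since the Puiseux expansion has constant sign on each polynomial of $\mathcal{P}$ for all sufficiently small $t$.

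To bound the degree of the Zariski closure of $\varphi((0,t_0))$ independently of $\card(\mathcal{P})$, one applies Block Elimination (Algorithm~14.6 of \cite{BPRbook2posted}) to the polynomial system consisting of $\mathcal{P}$ together with the sphere equation and the criticality Lagrange equations, taking the $n$ coordinates of $y$ as the block to be eliminated and $\eps$ together with the Lagrange multipliers as the parameters. By the degree estimates summarized in Proposition~\ref{prop:BElim}, the output describes the parametrized critical points as the zero set of polynomials whose degrees are controlled purely by $d$ and $n$. The main obstacle, and the source of the specific exponent $4n+3$, lies in the precise bookkeeping: one must combine the $d^{O(n)}$ contribution from Block Elimination applied to a block of $n$ variables with the additional elimination needed to pass from the parametrized description back to a univariate description in $t$, together with the degree overhead coming from the sphere, criticality, and Lagrange-multiplier equations. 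A careful accounting in the style of the proof of Theorem~\ref{thm:Lojasiewicz} in Section~\ref{sec:proofs} yields the stated bound $(O(d))^{4n+3}$, and the independence from $\card(\mathcal{P})$ is once again the payoff of avoiding any sign-adapted cell decomposition.
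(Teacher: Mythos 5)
First, a point of comparison: the paper does not actually prove this statement --- it quotes it from \cite{BR2021} as an illustration of a bound with no combinatorial part --- so your proposal can only be measured against the strategy of that cited work. Your overall architecture (an infinitesimal ball about $x$, selection of a distinguished point of $\E(S,\R\langle\eps\rangle)\cap\overline{B(x,\eps)}$, Puiseux expansion in the infinitesimal to produce the path, and degree control via elimination) is indeed the right skeleton, and your reason for combinatorial independence is the correct one.

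There are, however, two genuine gaps. First, the critical-point step as you state it does not produce a point of $S$: the critical points of a generic linear form $\ell$ restricted to $\partial B(x,\eps)$ are just two antipodal points of the sphere and bear no relation to $S$, and you cannot write Lagrange equations for $\ell$ restricted to the realization of a sign condition, which is semi-algebraic rather than algebraic. The missing ingredient is bounded algebraic sampling: one must first replace the relevant sign conditions on $\mathcal{P}$ (together with the ball constraint) by nearby \emph{bounded algebraic sets} via further infinitesimal perturbations of the $P\in\mathcal{P}$, and only then take critical points on those algebraic sets; this is what guarantees a point of $\E(S,\R\langle\eps\rangle)\cap\overline{B(x,\eps)}$ whose coordinates admit a univariate representation of controlled degree. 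Second, and more seriously, Proposition~\ref{prop:BElim} does not control the quantity you need. It bounds the degrees, in the parameters, of polynomials whose sign conditions determine the fibers of a projection; the degree of the Zariski closure of the image of $\varphi$ is a different invariant. To bound it one needs the sampled point to come with a parametrized univariate representation $(f(\eps,T),g_0,\ldots,g_n)$, so that $\varphi(t)=\bigl(g_1(t,u(t))/g_0(t,u(t)),\ldots,g_n(t,u(t))/g_0(t,u(t))\bigr)$ with $f(t,u(t))=0$, after which the degree of the Zariski closure of the image is estimated from the bidegrees of $f$ and the $g_i$. Your proposal never sets this up, and the exponent $4n+3$ --- the actual content of the quantitative statement --- is asserted (``a careful accounting yields'') rather than derived. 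As written this is a plausible plan, not a proof.
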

Notice that the bound on the degree of the image of the curve $\varphi$  
in the above theorem has no combinatorial part, i.e.,
there is no dependence on the cardinality of $\mathcal{P}$ (unlike the bound in \eqref{eqn:Betti}). 

\item (Effective quantifier elimination)
Quantifier elimination is a key property of the theory of the reals, and has been studied widely
from the complexity view-point. The following quantitative version appears in \cite{BPR95}. 
\begin{theorem}
[Quantifier Elimination] \label{14:the:tqe}
Let $\mathcal{P} \subset \R[X_{[1]},\ldots, X_{[\omega]}, Y]_{\leq d}$ be a
finite set of $s$ polynomials, where $X_{[i]}$ is a block of $k_i$ variables, and
$Y$ a block of $\ell$ variables.
Let 
\[
\Phi (Y) = (Q_1 X_{[1]})\cdots (Q_\omega X_{[\omega]}) \Psi(X_{[1]},\ldots, X_{[\omega]}, Y)
\]
be a quantified-formula, with $Q_i \in \{\exists,\forall\}$ and $\Psi$ a $\mathcal{P}$-formula.
 Then there exists a quantifier-free formula 
  \[ \Psi (Y) = \bigvee_{i=1}^{I} \bigwedge_{j=1}^{J_{i}} (
     \bigvee_{n=1}^{N_{ij}} \sign (P_{ijn} (Y))= \sigma_{ijn} ) , \]
  where $P_{ijn} (Y)$ are polynomials in the variables $Y$, $\sigma_{ijn} \in
  \{0,1, - 1\}$,
  \begin{eqnarray*}
    I & \leq & s^{(k_{\omega} +1) \cdots (k_{1} +1) ( \ell +1)} d^{O
    (k_{\omega} ) \cdots O (k_{1} ) O ( \ell )} ,\\
    J_{i} & \leq & 
      s^{(k_{\omega} +1) \cdots (k_{1} +1)} d^{O (k_{\omega} ) \cdots O (k_{1}
      )}
    ,\\
    N_{ij} & \leq &
      d^{O (k_{\omega} ) \cdots O (k_{1} )}
  ,
  \end{eqnarray*}
  equivalent to $\Phi$,
  and the degrees of the polynomials $P_{ijk}$ are bounded by $d^{O
  (k_{\omega} ) \cdots O (k_{1} )}$. 
  
  Moreover, if the polynomials in $\mathcal{P}$ have coefficients in $\mathbb{Z}$ with bit sizes bounded by $\tau$, the polynomials $P_{ijk}$ also have integer coefficients with 
  bit-sizes bounded by $\tau d^{O (k_{\omega} ) \cdots O (k_{1} )}$. 
\end{theorem}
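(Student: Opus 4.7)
The plan is to induct on the number $\omega$ of quantifier blocks. The base case $\omega = 1$ is the single-block elimination, which the paper has already isolated as Proposition~\ref{prop:BElim} (the Block Elimination subroutine applied to a family $\mathcal{P}\subset\R[X_{[1]},Y]$ with $k_1$ variables eliminated). The inductive step peels off the innermost quantifier block $Q_\omega X_{[\omega]}$ by applying Block Elimination to the current family of polynomials, treating $X_{[1]},\ldots,X_{[\omega-1]},Y$ as parameters. This produces a new family $\mathcal{P}' = \BElim_{X_{[\omega]}}(\mathcal{P}) \subset \R[X_{[1]},\ldots,X_{[\omega-1]},Y]$ with the crucial property that for every realizable sign condition $\sigma'$ on $\mathcal{P}'$ and every point $y'$ realizing $\sigma'$, the set of sign conditions of $\mathcal{P}(X_{[\omega]},y')$ realized as $X_{[\omega]}$ varies over $\R^{k_\omega}$ is determined by $\sigma'$ alone. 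Consequently, the truth value of $(Q_\omega X_{[\omega]})\Psi$ is constant on each realizable sign condition of $\mathcal{P}'$, so replacing $\mathcal{P}$ by $\mathcal{P}'$ and the innermost block by an appropriate Boolean combination of its atoms yields a quantified formula with one fewer block over $\mathcal{P}'$, to which the induction hypothesis applies.

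The complexity bookkeeping proceeds by tracking two parameters through each peeling step: the cardinality $s_i$ of the family and the degree bound $d_i$. Proposition~\ref{prop:BElim} gives $d_{i-1} = d_i^{O(k_i)}$ and $s_{i-1} = s_i^{k_i+1} d_i^{O(k_i)}$. Unfolding the recursion from $i=\omega$ down to $i=1$ and then taking one final Boolean-combination step in the $\ell$ free variables yields $J_i \le s^{(k_\omega+1)\cdots(k_1+1)} d^{O(k_\omega)\cdots O(k_1)}$ conjuncts, $N_{ij} \le d^{O(k_\omega)\cdots O(k_1)}$ disjuncts inside each conjunct (this factor being the number of sign conditions any single polynomial can realize, bounded by the Thom-encoding count), and an additional factor $s^{\ell+1} d^{O(\ell)}$ for $I$ coming from the top-level realizable sign conditions in $Y$. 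The degrees of the output polynomials $P_{ijn}$ follow the same recursion as $d_i$, giving $d^{O(k_\omega)\cdots O(k_1)}$.

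The bit-size bound in the integer case is handled in parallel with the degree recursion: each application of Block Elimination produces polynomials whose coefficients are signed determinants (subresultants, discriminants, and related polynomial expressions) in the input coefficients, whose bit-sizes inflate by a factor $d_i^{O(k_i)}$ at step $i$. Composing these factors over all $\omega$ block-eliminations and the final $\ell$-variable restricted-elimination step yields the claimed bound $\tau \cdot d^{O(k_\omega)\cdots O(k_1)}$. The main obstacle, and the heart of the whole argument, is Proposition~\ref{prop:BElim} itself: obtaining a single-block elimination whose output polynomials have cardinality and degree governed by $s^{k+1} d^{O(k)}$ and $d^{O(k)}$ rather than the doubly exponential blow-up of naive CAD requires the critical-points-of-a-distinguished-projection machinery together with subresultant-based univariate elimination and careful Thom-encoded sign determination; once this single-block building block and its bit-size analysis are in hand, the theorem is an induction together with the recurrences above.
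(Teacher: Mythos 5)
First, a point of reference: the paper does not prove this theorem --- it is quoted from Basu--Pollack--Roy --- so the only comparison available is with the proof in the cited source. Your overall strategy (iterate a single-block elimination innermost-first, track the cardinality $s_i$ and degree $d_i$ of the intermediate families through the recurrences $s_{i-1}\le s_i^{k_i+1}d_i^{O(k_i)}$ and $d_{i-1}\le d_i^{O(k_i)}$, then finish with a realizable-sign-condition count in the $\ell$ free variables) is exactly the strategy of that proof, and your unfolding of the recurrences does reproduce the stated bounds on $I$, $J_i$, $N_{ij}$ and on the degrees and bit-sizes.

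The gap is in the logical content of your inductive step. Proposition~\ref{prop:BElim} guarantees only that $\SIGN(\mathcal{P}(y,X_{[\omega]}))$ is constant as $y$ ranges over a \emph{semi-algebraically connected component} of the realization of a sign condition on $\mathcal{P}'=\BElim_{X_{[\omega]}}(\mathcal{P})$; it does not say, as you assert, that this set is ``determined by $\sigma'$ alone.'' Consequently the realization of $(Q_\omega X_{[\omega]})\Psi$ is only known to be a union of such connected components (this is precisely the paper's Lemma~\ref{lem:BElim}), and a union of connected components of sign-condition realizations of $\mathcal{P}'$ is in general \emph{not} a $\mathcal{P}'$-semi-algebraic set. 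So you cannot simply replace the innermost block by ``an appropriate Boolean combination of its atoms'' over $\mathcal{P}'$ and recurse; two components carrying the same sign condition of $\mathcal{P}'$ may give different truth values for the inner block. The cited proof closes this hole with machinery your sketch leaves inside the black box: Block Elimination also outputs parametrized univariate representations whose associated sample points meet, for every parameter value $y$, every connected component of every realizable sign condition of $\mathcal{P}(y,\cdot)$. The truth of the inner block at $y$ is decided by the signs of the polynomials of $\mathcal{P}$ at these finitely many sample points, and each such sign is in turn expressed, via Thom encodings and subresultant coefficients, as a quantifier-free condition on $y$. It is this sample-point/sign-determination layer --- not a disjunction of sign conditions on $\BElim$ itself --- that produces the quantifier-free formula at each stage and accounts for the $\bigvee\bigwedge\bigvee$ shape of the output, with $N_{ij}$ governed by the Thom-encoding length $d^{O(k_\omega)\cdots O(k_1)}$ rather than by the three possible signs of a single polynomial. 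With that step supplied, the rest of your bookkeeping goes through.
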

Notice that the bound on the degrees of the polynomials appearing in the quantifier-free formula is independent
of the combinatorial parameter $s =\card(\mathcal{P})$. This fact will play a key role in the 
proof of the main theorem (Theorem~\ref{thm:Lojasiewicz}) below.
\end{enumerate}

\section{Proofs of the main results}
\label{sec:proofs}

\subsection{Proof of Theorem~\ref{thm:Lojasiewicz}}
\label{subsec:proof:thm:Lojasiewicz}
Before proving Theorem~\ref{thm:Lojasiewicz} we need some preliminary results.

\subsubsection{Cylindrical definable decomposition}

The notion of cylindrical definable decomposition (introduced by {\L}ojasiewicz \cite{Loj1,Loj2}) plays a very important in semi-algebraic
and more generally o-minimal geometry \cite{Michel2}. We include its definition below for the sake of completeness  
and also for fixing notation that will be needed later.

\begin{definition}[Cylindrical definable decomposition]
\label{def:cad}
Fixing the standard basis of $\R^n$, we identify for each $i,1 \leq i \leq n$, $\R^i$ with the span of the first
$i$ basis vectors.
Fixing an o-minimal expansion of $\R$,
a cylindrical definable decomposition  of $\R$ is an $1$-tuple $(\mathcal{D}_1)$, where
$\mathcal{D}_1$ is a finite set of subsets of $\R$, each element being a point or an open interval,
which together gives a partition of $\R$. 
A cylindrical definable decomposition of $\R^n$ is an $n$-tuple $(\mathcal{D}_1,\ldots,\mathcal{D}_n)$,
where each $\mathcal{D}_i$ is a decomposition of $\R^i$,
$(\mathcal{D}_1,\ldots, \mathcal{D}_{n-1})$ is a cylindrical decomposition of $\R^{n-1}$,
and $\mathcal{D}_n$ is a finite set of definable subsets of $\R^n$ (called the cells of $\mathcal{D}_n$) giving a partition of $\R^n$ consisting 
of the following:
for each $C \in \mathcal{D}_{n-1}$, there is a finite set of definable continuous functions
$f_{C,1}, \ldots, f_{C,N_C}: C \rightarrow \R$ such that
$f_{C_1} < \cdots < f_{C,N_C}$,
and each element of $\mathcal{D}_n$ is either the graph of a function $f_{C,i}$ 
or of the form 
\begin{enumerate}[(a)]
    \item $\{ (x,t) \;\mid\; x\in C, t < f_{C,1}(x) \}$, 
    
    \item  $\{ (x,t) \;\mid\; x\in C, f_{C,i}(x) < t < f_{C,i+1}(x) \}$, 
    
    \item  $\{ (x,t) \;\mid\; x\in C, f_{C,N_C}(x) < t  \}$, 
    
    \item  $\{ (x,t) \;\mid\; x\in C \}$
\end{enumerate}
(the last case arising is if the set of functions $\{f_{C,i} | 1 \leq i \leq N_C \}$ is empty).

We will say that the cylindrical definable decomposition $(\mathcal{D}_1,\ldots,\mathcal{D}_n)$ is adapted to 
a definable subset $S$ of $\R^n$, if for each $C \in \mathcal{D}_n$, $C \cap S$ is either equal to 
$C$ or empty.

In the semi-algebraic case we will refer to a cylindrical definable decomposition by 
cylindrical algebraic decomposition.

\end{definition}

In the semi-algebraic case we will use the following extra notion.

\subsubsection{Sign conditions}
\begin{notation}[Sign conditions and their realizations]
\label{not:sign-condition}
Let $\mathcal{P}$ be a finite subset of $\R[X_1,\ldots,X_n]$.
For $\sigma \in \{0,1,-1\}^{\mathcal{P}}$, we call the formula $\bigwedge_{P \in \mathcal{P}} (\sign(P) = \sigma(P))$
to be a 
\emph{sign condition on $\mathcal{P}$} and call its realization the \emph{realization of the sign condition $\sigma$}.
We say that a \emph{sign condition is realizable} if its realization is not empty.

We denote by 
$\Cc(\mathcal{P})$ the set of semi-algebraically connected components of the realizations of each realizable sign condition on
$\mathcal{P}$.

We say that a cylindrical algebraic decomposition $\mathcal{D} = (\mathcal{D}_1,\ldots, \mathcal{D}_n)$ 
of 
$\R^n$ is \emph{adapted to $\mathcal{P}$} if for each
cell $C$ of $\mathcal{D}_n$, and each $P \in \mathcal{P}$, $\sign(P(x))$ is constant
for $x \in C$. (This implies in particular each element of $\Cc(\mathcal{P})$ is a union of cells of 
$\mathcal{D}_n$.)
\end{notation}

\begin{lemma}
\label{lem:2dim}
Let $\mathcal{F} \subset \R[X_1,X_2]_{\leq p}$ be a finite set of non-zero polynomials.
Let $f:\R \rightarrow \R$ be a semi-algebraic map,
such that 
\[
\mathrm{graph}(f) = \{(x,f(x)) \mid x \in \R\} = \bigcup_{C \in \mathcal{C}} C
\]

for some subset $\mathcal{C} \subset \Cc(\mathcal{F})$.

Then, there exist $a, c \in \R$ such that for all $x \geq a$,
\[
|f(x)| \leq c \cdot x^p.
\]
\end{lemma}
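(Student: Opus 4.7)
The plan is to show that near $x=+\infty$ the graph of $f$ lies entirely on a single cell $C_0 \in \mathcal{C}$, to exhibit a polynomial $F \in \mathcal{F}$ vanishing identically on $C_0$, and then to use Cauchy's bound on the roots of $F(x,Y) \in \R[Y]$ to control $|f(x)|$ by a polynomial of degree at most $p$ in $x$.

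First I would observe that since $\mathrm{graph}(f) = \bigcup_{C \in \mathcal{C}} C$ is the graph of a function $\R \to \R$, each cell $C \in \mathcal{C}$ is contained in the graph of a function and hence has dimension at most $1$. On the other hand, $C$ is a semi-algebraically connected component of the realization of some sign condition $\sigma_C \in \{0,1,-1\}^{\mathcal{F}}$ on $\mathcal{F}$. If $\sigma_C$ contained no zero, its realization would be open in $\R^2$ and $C$ would be $2$-dimensional, contradicting the previous sentence. Hence there exists $F_C \in \mathcal{F}$ with $F_C \equiv 0$ on $C$.

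Next I would argue that the graph is eventually carried by a single cell. For each $C \in \mathcal{C}$, the set $\pi_1(C) \subset \R$ is semi-algebraic, hence a finite union of points and open intervals; since the $C$'s are pairwise disjoint and cover $\mathrm{graph}(f)$ bijectively via $\pi_1$, the sets $\{\pi_1(C)\}_{C \in \mathcal{C}}$ partition $\R$. In particular exactly one $C_0 \in \mathcal{C}$ has $\pi_1(C_0)$ containing an interval of the form $(a_0,\infty)$. Take $F := F_{C_0}$; then $F(x,f(x)) = 0$ for every $x > a_0$.

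Now I would write $F(X_1,X_2) = \sum_{j=0}^{q} b_j(X_1) X_2^j$ with $q = \deg_{X_2} F$, $b_q \not\equiv 0$, and $\deg b_j \leq p - j$ (using that the total degree of $F$ is at most $p$). Note $q \geq 1$: otherwise $F$ depends only on $X_1$, and $F(x,f(x))=0$ for $x$ in an unbounded interval would force $b_0 \equiv 0$, i.e.\ $F \equiv 0$, contradicting the assumption that the elements of $\mathcal{F}$ are nonzero. For $x$ sufficiently large we have $b_q(x) \neq 0$ and
\[
|b_q(x)| \geq \gamma \, x^{\deg b_q}, \qquad |b_j(x)| \leq C_j \, x^{p-j} \quad (0 \leq j < q),
\]
with $\gamma > 0$ and constants $C_j$ depending only on $F$. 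Since $f(x)$ is a root of the monic polynomial $Y^q + \sum_{j=0}^{q-1} (b_j(x)/b_q(x)) Y^j$, Cauchy's bound gives
\[
|f(x)| \;\leq\; \max\!\left(1,\; \sum_{j=0}^{q-1} \frac{|b_j(x)|}{|b_q(x)|}\right) \;\leq\; 1 + \sum_{j=0}^{q-1} \frac{C_j}{\gamma}\, x^{p-j-\deg b_q} \;\leq\; c \cdot x^{p}
\]
for some $c \in \R$ and all $x$ sufficiently large, since $p - j - \deg b_q \leq p$ for $0 \leq j < q$. Enlarging $a_0$ to absorb the constant term yields the stated inequality.

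The only mildly delicate step is the passage from ``graph is a union of cells'' to ``graph is eventually a single cell,'' which rests on the finiteness of $\mathcal{C}$ together with the fact that one-variable semi-algebraic sets are finite unions of intervals; after that, the argument is essentially the Cauchy root bound applied with carefully tracked total-degree data for $F$.
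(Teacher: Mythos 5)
Your proof is correct and takes essentially the same route as the paper's: both arguments locate a single element of $\Cc(\mathcal{F})$ carrying $\mathrm{graph}(f)$ over a half-line $(a,\infty)$, use the fact that each $C\in\mathcal{C}$ has dimension at most one to extract an $F\in\mathcal{F}$ vanishing identically on it, and then bound $|f(x)|$ as a root of $F(x,\cdot)$ using $\deg F\le p$. The only differences are cosmetic: the paper finds the tail cell via a cylindrical algebraic decomposition adapted to $\mathcal{F}$ where you partition $\R$ by the projections $\pi_1(C)$, and it cites \cite[Proposition~2.6.1]{BCR98} for the final growth estimate where you carry out the Cauchy root bound explicitly.
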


\begin{proof}
Consider a cylindrical algebraic decomposition $\mathcal{D} = (\mathcal{D}_1,\mathcal{D}_2)$ of $\R^2$ (with coordinates $X_1,X_2$) adapted to 
the set $\mathcal{F}$.

This implies that each $C \in \Cc(\mathcal{F})$ is a union of cells of $\mathcal{D}_2$.
Let $a_0 < a_1 <\cdots < a_n=a$ be the end-points of the intervals giving the partition of
$\R$ (corresponding to the $X_1$ coordinate)  in the decomposition $\mathcal{D}_1$.

Since the cylindrical decomposition $\mathcal{D}$ is adapted to $\mathcal{F}$,
and 
$\mathrm{graph}(f)
=\bigcup_{C \in \mathcal{C}} C
$
for some subset $\mathcal{C} \subset \Cc(\mathcal{F})$,
$\dim(C) \leq 1$ for each $C \in \mathcal{C}$, since 
\[
\dim(C) \leq \dim (\mathrm{graph}(f)) = 1.
\]
Hence, there exists for each $C \in \mathcal{C}$
a polynomial $F \in \mathcal{F}$, such that $F(x) = 0$ for all $x \in C$.

Also, since $\mathrm{graph}(f)
=\bigcup_{C \in \mathcal{C}} C
$
and each $C \in \mathcal{C}$ is a union of cells of $\mathcal{D}$, 
there exists a continuous semi-algebraic function $\gamma: (a,\infty) \rightarrow \R$, such that
$\mathrm{graph}(\gamma) \subset \mathrm{graph}(f)$, 
and $\mathrm{graph}(\gamma)$ is a cell of  $\mathcal{D}_2$.

Let $C \in \Cc(\mathcal{C})$ be the unique  element of $\mathcal{C}$ which contains 
$\mathrm{graph}(\gamma)$, and $F \in \mathcal{F}$ such that $F(x) = 0$ for all $x \in C$.

The lemma now follows from \cite[Proposition 2.6.1]{BCR98} noting that 
\[
\deg(F) \leq p.
\]
\end{proof}

\begin{notation}[Realizable sign conditions]
For any finite set of polynomials $\mathcal{P} \subset \R[X_1,\ldots,X_k]$
we denote by
  \[\SIGN (\mathcal{P}) \subset \{0, 1, -
     1\}^{\mathcal{P} } \]
the set of all realizable sign conditions for
     $\mathcal{P}$  over $\R^k$, i.e.
\[
\SIGN (\mathcal{P}) = \{ \sigma \in \{0, 1, -1\}^{\mathcal{P}} \;\mid\; 
\RR(\sigma,\R^n) \neq \emptyset\}.
\]
\end{notation}

\begin{proposition}
\label{prop:BElim}
Let
$\mathcal{P} \subset \R [ Y,X]$, with
$Y = (Y_{1} , \ldots ,Y_{\ell}), X = (X_{1} ,\ldots,X_{k})$
be a finite set of polynomials. Then there exists a finite subset
$\BElim_{X} ( \mathcal{P} ) \subset \R [Y]$ 
such that 
for each 
$C \in \Cc(\BElim_{X} ( \mathcal{P} ))$,
$\SIGN (\mathcal{P} (y, X)$ 
is fixed as $y$ varies over $C$.

If the degrees of the polynomials in $\mathcal{P}$ are bounded by $d \geq 2$, then the
degrees of the polynomials in $\BElim_{X} ( \mathcal{P} )$ is bounded by
    \begin{equation}
    \label{eqn:prop:BElim}
    8 d^2 (2k(2d + 2) + 2)(2d + 3)(2d + 6)^2(2d + 5)^{2k-2} < (8 d)^{2k+4}.
    \end{equation}
\end{proposition}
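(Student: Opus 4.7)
The plan is to invoke the Block Elimination algorithm (Algorithm 14.6 of \cite{BPRbook2posted}) on the input $\mathcal{P}$, letting $\BElim_X(\mathcal{P})$ be its output, and to verify the correctness property and the degree bound by examining the algorithm step by step. The correctness property — that $\SIGN(\mathcal{P}(y,X))$ is constant as $y$ varies in any $C \in \Cc(\BElim_X(\mathcal{P}))$ — is exactly the specification of that algorithm, so nothing has to be re-proved here; the content is entirely in the degree estimate \eqref{eqn:prop:BElim}.

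First I would recall what the Block Elimination algorithm produces: a sequence of univariate eliminations in the $X$-variables, each based on subresultant coefficients (and their truncations) of the parameterized polynomials. For a family of polynomials of degree bounded by $D$ in the variable being eliminated, the subresultant coefficients have degree at most $2D$ in the coefficients of the originals, so after eliminating one $X$-variable from polynomials of total degree at most $D$ in $(Y,X_2,\ldots,X_k)$, the new family has total degree bounded by a function $\phi(D)$ that is linear in $D$ with an explicit coefficient that can be read off from the subresultant construction. Iterating this estimate $k-1$ times produces the factor $(2d+5)^{2k-2}$ in \eqref{eqn:prop:BElim}.

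Next I would handle the last elimination step, which in the BPR algorithm is performed by Restricted Elimination (Algorithm 11.54 of \cite{BPRbook2posted}) rather than by another generic subresultant step, because one needs a family of polynomials in $\R[Y]$ whose realizable sign conditions refine the connected components of the original sign conditions on $\mathcal{P}(y,X)$. The degree analysis of Restricted Elimination is already carried out in \cite{BPRbook2posted}, and substituting the output of the preceding $k-1$ elimination steps into that bound contributes the remaining factors $8d^2(2k(2d+2)+2)(2d+3)(2d+6)^2$. Combining the two pieces yields the product on the left-hand side of \eqref{eqn:prop:BElim}, and a crude majoration $2d+6 \leq 8d$, $2d+5\leq 8d$, etc., together with $d\geq 2$, gives the clean bound $(8d)^{2k+4}$ on the right.

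The main obstacle, and the only place where real care is required, is the bookkeeping across the interface between the generic subresultant-based eliminations and the final Restricted Elimination step: one has to input the degree bounds from the former into the complexity statement of the latter exactly as formulated in \cite{BPRbook2posted}, and one has to be attentive to whether degrees are tracked as degrees in a single variable or as total degrees in the remaining variables, since a careless substitution inflates the constants in the exponent. Once that is done consistently, the correctness and the degree estimate both follow directly from the corresponding statements in \cite{BPRbook2posted}; the bound $(8d)^{2k+4}$ then simplifies the product to the form that will be used in the rest of the paper.
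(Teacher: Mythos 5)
There is a genuine gap: your account of the internal mechanism of Block Elimination is not how Algorithm 14.6 of \cite{BPRbook2posted} works, and the degree bookkeeping you propose would not produce the stated bound. Block Elimination does \emph{not} eliminate the variables $X_1,\ldots,X_k$ one at a time by iterated subresultant projections. If it did, each projection step would roughly square the degree in the remaining variables (subresultant coefficients of degree-$D$ polynomials are determinants of size $O(D)$ in the coefficients), and iterating $k-1$ times would give a doubly exponential bound $d^{2^{O(k)}}$ — the CAD bound — not $(8d)^{2k+4}$. Even your weaker claim that each step is linear in $D$ cannot, after $k-1$ iterations, produce the specific factor $(2d+5)^{2k-2}$. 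The whole point of Block Elimination is to treat the block $X$ of $k$ variables \emph{at once} via the critical point method: the algorithm first computes a set $\mathrm{UR}_X(\mathcal{P})$ of parametrized univariate representations, i.e.\ tuples $v=(f,g_0,\ldots,g_k)$ of polynomials in a single new variable $T$, in $Y$, and in two infinitesimals $\eps,\delta$, whose associated points meet every connected component of every realizable sign condition on $\mathcal{P}(y,X)$ for each $y$. The complexity analysis there yields a $T$-degree bound $D=(2d+6)(2d+5)^{k-2}$ and a $Y$-degree bound $D'=(2k(2d+2)+2)(2d+3)(2d+6)(2d+5)^{k-2}$; the exponent $2k-2$ in the proposition arises as the \emph{product} of the two copies of $(2d+5)^{k-2}$ coming from $D$ and $D'$, not from $2(k-1)$ elimination rounds.

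Concretely, the missing steps are: (i) substitute $X_i=g_i/g_0$ into each $P\in\mathcal{P}$ and clear denominators, obtaining $P_v=g_0^{e}P(g_1/g_0,\ldots,g_k/g_0)$ with $\deg_T P_v\le 2dD$ and $\deg_Y P_v\le 2dD'$; (ii) apply Restricted Elimination (Algorithm 11.54) with respect to $T$, whose complexity analysis bounds the $Y$-degrees of the output by $2\cdot(2dD)\cdot(2dD')=8d^2DD'$, which is exactly the left-hand side of \eqref{eqn:prop:BElim}; and (iii) take the coefficients with respect to $\eps,\delta$ of the resulting polynomials (which does not increase $Y$-degrees) and union over $v\in\mathrm{UR}_X(\mathcal{P})$ to get $\BElim_X(\mathcal{P})$. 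You correctly identify Restricted Elimination as the last step and correctly reduce the correctness claim to Proposition 14.10 of \cite{BPRbook2posted}, but the intermediate object whose degrees must be fed into Restricted Elimination is the family of univariate representations, not the output of a chain of one-variable projections; without that, the product structure $8d^2\cdot D\cdot D'$ of the bound cannot be recovered.
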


\begin{proof} 
Let $\BElim_X(\mathcal{P})$ be the set of polynomials denoted by the same formula in the output of 
Algorithm 14.6 (Block Elimination) in \cite{BPRbook2posted}. The 
fact that for each 
$C \in \Cc(\BElim_{X} ( \mathcal{P} ))$,
$\SIGN (\mathcal{P} (y, X)$ 
is fixed as $y$ varies over $C$ is a consequence of 
Proposition 14.10 in \cite{BPRbook2posted}.

To obtain the upper bound on the degrees of the polynomials in 
$\BElim_{X} ( \mathcal{P})$,
we follow the complexity analysis of Algorithm 14.6 (Block Elimination) in \cite{BPRbook2posted} using
the same notation as in the algorithm. The algorithm first computes a set  $\mathrm{UR}_X(\mathcal{P})$
whose elements are tuples $v = (f,g_0,\ldots,g_k)$ of polynomials in $T,Y,\eps,\delta$ (here $\eps$ and $\delta$ are infinitesimals and $T$ is one variable).
It is proved in the complexity analysis of the algorithm 
that the degrees of the polynomials in  $T$ appearing in the various tuples
$v \in \mathrm{UR}_X(\mathcal{P})$ are bounded by
\[
D = (2d + 6)(2d + 5)^{k-2},
\]
and their degrees in $Y$ (as well as in $\eps,\delta$) are bounded by
\[
D' = (2k(2d + 2) + 2)(2d + 3)(2d + 6)(2d + 5)^{k-2}.
\]

It follows that for each $P \in \mathcal{P}$, and $v = (f,g_0,\ldots,g_k) \in \mathrm{UR}_X(\mathcal{P})$,
the degree in $T$ of the polynomial
\[
P_v = g_0^e P(\frac{g_1}{g_0},\ldots,\frac{g_k}{g_0}),
\]
where $e$ is the least even integer greater than $\deg(P) \leq d$,
is bounded by
$(d+1) D \leq 2 d D$, and similarly the degree in $Y$ of $P_v$ is bounded by $2 d D'$.
The same bounds apply to all polynomials in the set $\mathcal{F}_v$ introduced in the algorithm.

It now follows from the complexity analysis of Algorithm 11.54 (Restricted Elimination) in \cite{BPRbook2posted}, that
 the degrees in $Y$ of the polynomials in $ \mathrm{RElim}_T(f,\mathcal{F}_v)$ are bounded by
\begin{eqnarray*}
2 (2 d D) (2 d D') &=& 8 d^2 D D'\\
&=& 8 d^2 (2k(2d + 2) + 2)(2d + 3)(2d + 6)^2(2d + 5)^{2k-2} \\
&\leq & (8 d^2) \cdot ( 6 k d) \cdot  (4 d) \cdot  (5 d)^2 \cdot  (4 d)^{2k-2}\\
&=& 8\cdot 6 \cdot 4 \cdot 5^2 \cdot k \cdot d^6 \cdot  (4 d)^{2k-2}\\
&= & \frac{ 3 \cdot 5^2}{4^3} \cdot k \cdot    (4 d)^{2k+4}\\
&< & (8 d)^{2k+4}.
\end{eqnarray*}
Denoting
$\mathcal{B}_v \subset \R[Y]$ the set of coefficients of the various polynomials in 
$\mathrm{RElim}_T(f,\mathcal{F}_v)$ written as polynomials in $\eps,\delta$, we have 
immediately obtain that the degrees in $Y$ of polynomials in $\mathcal{B}_v$ are bounded by
\[
8 d^2 (2k(2d + 2) + 2)(2d + 3)(2d + 6)^2(2d + 5)^{2k-2} < (8 d)^{2k+4}.
\]

The proposition follows since according to the algorithm
\[
\BElim_X(\mathcal{P}) = \bigcup_{v \in \mathrm{UR}_X(\mathcal{P})} \mathcal{B}_v.
\]
\end{proof}

\begin{lemma}
\label{lem:BElim}
Suppose that $\mathcal{P} \subset \R [Y, X]$, 
with
$Y = (Y_{1} , \ldots ,Y_{\ell}), X = (X_{1} ,\ldots,X_{k})$
and $\Phi$ is $\mathcal{P}$-formula. 
Then there exist subsets $\mathcal{C}_\exists, \mathcal{C}_\forall \subset 
\Cc(\BElim_{X} ( \mathcal{P} ))$, such that
\begin{eqnarray}
\nonumber
\RR((\exists X) \Phi,\R^\ell) &=& \bigcup_{C \in \mathcal{C}_\exists} C, \\
\label{eqn:lem:BElim}
\RR((\forall X) \Phi,\R^\ell) &=& \bigcup_{C \in \mathcal{C}_\forall} C.
\end{eqnarray}
\end{lemma}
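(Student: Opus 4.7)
The plan is to deduce this lemma essentially as a direct corollary of Proposition~\ref{prop:BElim}. The key observation is that, since $\Phi$ is a $\mathcal{P}$-formula, its truth value at a point $(y,x) \in \R^\ell \times \R^k$ is determined by the sign vector $(\sign(P(y,x)))_{P \in \mathcal{P}}$. Explicitly, there exists a Boolean function $B_\Phi \colon \{0,1,-1\}^{\mathcal{P}} \to \{\text{true}, \text{false}\}$ such that $\Phi(y,x)$ holds if and only if $B_\Phi\bigl((\sign(P(y,x)))_{P \in \mathcal{P}}\bigr) = \text{true}$.

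Next, for a fixed $y \in \R^\ell$, I would observe that $(\exists X)\Phi(y,X)$ holds if and only if some $\sigma \in \SIGN(\mathcal{P}(y,X))$ satisfies $B_\Phi(\sigma) = \text{true}$, and dually $(\forall X)\Phi(y,X)$ holds if and only if every $\sigma \in \SIGN(\mathcal{P}(y,X))$ satisfies $B_\Phi(\sigma) = \text{true}$. In particular, whether either quantified formula holds at $y$ depends only on the subset $\SIGN(\mathcal{P}(y,X)) \subset \{0,1,-1\}^{\mathcal{P}}$, and not otherwise on the point $y$ itself.

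Finally I would invoke Proposition~\ref{prop:BElim}: the set $\SIGN(\mathcal{P}(y,X))$ is fixed as $y$ varies over each $C \in \Cc(\BElim_{X}(\mathcal{P}))$. Combined with the previous step, this means that the truth values of both $(\exists X)\Phi(y,X)$ and $(\forall X)\Phi(y,X)$ are constant as $y$ ranges over any single $C$. Since the family $\Cc(\BElim_{X}(\mathcal{P}))$ partitions $\R^\ell$ (being, by definition, the collection of connected components of realizations of all realizable sign conditions on $\BElim_{X}(\mathcal{P})$), setting
\[
\mathcal{C}_\exists := \{C \in \Cc(\BElim_{X}(\mathcal{P})) \mid (\exists X)\Phi(y,X) \text{ holds for some, equivalently all, } y \in C\}
\]
and defining $\mathcal{C}_\forall$ analogously yields the two equalities in \eqref{eqn:lem:BElim}. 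There is no real obstacle here beyond unpacking definitions: once Proposition~\ref{prop:BElim} is available, the set $\BElim_{X}(\mathcal{P})$ has been engineered precisely so that block elimination of the $X$ variables in a quantifier-free $\mathcal{P}$-formula can be read off from its realizable sign conditions.
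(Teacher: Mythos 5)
Your proposal is correct and follows essentially the same route as the paper's own proof: it combines the fact from Proposition~\ref{prop:BElim} that $\SIGN(\mathcal{P}(y,X))$ is constant on each $C \in \Cc(\BElim_{X}(\mathcal{P}))$ with the observation that the truth of $(\exists X)\Phi(y,X)$ and $(\forall X)\Phi(y,X)$ at a point $y$ is determined by that set of realizable sign conditions. Your write-up merely makes explicit (via the Boolean function $B_\Phi$ and the partition property of $\Cc$) what the paper leaves implicit.
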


\begin{proof}
The lemma follows from the fact that for each 
$C \in \Cc(\BElim_{X} ( \mathcal{P} ))$,
the set
$\SIGN (\mathcal{P} (y, X)$ 
is fixed as $y$ varies over $C$ (Proposition~\ref{prop:BElim}), and the observation that 
for each $y \in \R^\ell$, the truth or falsity of each of the formulas
$(\exists X) \Phi(y,X),  (\forall X) \Phi(y,X)$
is determined by the set $\SIGN(\mathcal{P}(y,X)$.
\end{proof}

The following proposition is the key ingredient in the proof of Theorem~\ref{thm:Lojasiewicz}. It can be viewed
as a quantitative version of Proposition 2.6.4 in \cite{BCR98} (which is not quantitative). 
Our proof is similar in spirit to the proof of Proposition 2.6.4 in \cite{BCR98} but differs at certain important points making it possible to achieve the quantitative bound claimed in the proposition. 

\begin{proposition}
\label{prop:2.6.4}
Let $d \geq 2$, $\mathcal{P} \subset \R[X_1,\ldots,X_n]_{\leq d}$ and 
\[
\mathcal{P}_f, \mathcal{P}_g \subset \R[X_1,\ldots,X_n,Y]_{\leq d}
\]
be finite sets of polynomials.
Let $A \subset \R^n$ be a closed  $\mathcal{P}$-semi-algebraic set,
$f: A \rightarrow \R$ a continuous semi-algebraic function  whose
graph is a $\mathcal{P}_f$-semi-algebraic set, and 
$g: \{x \in A \;\mid\; f(x) \neq 0\} \rightarrow \R$ a continuous semi-algebraic function whose 
graph is a $\mathcal{P}_g$-semi-algebraic set.
Then there exists 
$N \leq (8 d)^{2n + 10}$ 
such that the function $f^N g$ extended by $0$ on $\{ x \in A \; \mid \;  f(x)=0\}$
is semi-algebraic and continuous on $A$. 
\end{proposition}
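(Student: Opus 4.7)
The plan is to reduce Proposition~\ref{prop:2.6.4} to the one-variable growth estimate Lemma~\ref{lem:2dim} by applying Proposition~\ref{prop:BElim} to eliminate the variables $X = (X_1,\ldots,X_n)$ from a single semi-algebraic set encoding both $f$ and $g$. Form
\[
\mathcal{S} := \{(x,u,v) \in \R^{n+2} \mid x \in A,\ (x,u) \in \mathrm{graph}(f),\ (x,v) \in \mathrm{graph}(g)\},
\]
which is defined by a $\tilde{\mathcal{P}}$-formula with $\tilde{\mathcal{P}} := \mathcal{P}(X) \cup \mathcal{P}_f(X,U) \cup \mathcal{P}_g(X,V) \subset \R[X_1,\ldots,X_n,U,V]_{\leq d}$. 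Proposition~\ref{prop:BElim} (applied with $Y = (U,V)$, so $k = n$) produces a finite set $\mathcal{F} := \BElim_X(\tilde{\mathcal{P}}) \subset \R[U,V]$ of polynomials of degree at most $(8d)^{2n+4}$, and Lemma~\ref{lem:BElim} implies that the projection $T := \pi_{U,V}(\mathcal{S})$ is a union of elements of $\Cc(\mathcal{F})$.

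Continuity of $f^N g$ extended by $0$ is a local statement at each point of $f^{-1}(0)$, so by intersecting $A$ with a closed ball I may assume $A$ is bounded (the added quadratic polynomial is absorbed by the degree bound since $d \geq 2$). Then for every $u \neq 0$ the fiber $\{x \in A : f(x) = u\}$ is compact, so
\[
V^{*}(u) := \sup\{|v| \mid (u,v) \in T\}
\]
is a finite semi-algebraic function on its domain. The key asymptotic bound to prove is $V^{*}(u) \leq c \cdot |u|^{-p}$ for small $|u|$; given this, any integer $N > p$ satisfies $|u|^N V^{*}(u) \to 0$, which is exactly the continuity of $f^N g$ at points of $f^{-1}(0)$ (continuity elsewhere being automatic).

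To obtain the asymptotic bound I fix a cylindrical algebraic decomposition of $\R^2$ adapted to $\mathcal{F}$. Since $T$ is a union of $\Cc(\mathcal{F})$-cells it is also a union of CAD cells, and on any sufficiently small one-sided neighborhood $(0,u_0)$ of $0$ the upper envelope $V^{*}$ coincides with the absolute value of a single CAD section $\xi$; in particular $\mathrm{graph}(V^{*}|_{(0,u_0)})$ lies in the zero set of a polynomial $F \in \mathcal{F}$ of degree at most $(8d)^{2n+4}$. The change of variable $s = 1/u$ turns this into a large-$s$ asymptotic for $\tilde{V}(s) := V^{*}(1/s)$, and clearing denominators produces $\tilde{F}(s,v) := s^{\deg_u F} F(1/s,v)$ of the same degree bound. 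After augmenting $\mathcal{F}$ with a bounded number of coordinate polynomials so that the (appropriately extended) graph of $\tilde{V}$ becomes a union of $\Cc(\mathcal{F}')$-elements for a family $\mathcal{F}'$ of the same degree bound, Lemma~\ref{lem:2dim} gives $\tilde{V}(s) \leq c \cdot s^{p}$ for $s$ large with $p \leq (8d)^{2n+4}$. The symmetric argument for $u < 0$ and the bookkeeping constants fit comfortably inside the stated bound $N \leq (8d)^{2n+10}$.

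The main technical obstacle is arranging the passage from ``$\mathrm{graph}(V^{*})$ sits inside $\bigcup_{F \in \mathcal{F}} \{F=0\}$'' to ``$\mathrm{graph}(V^{*})$ is a union of $\Cc(\mathcal{F}')$-elements for an enlarged family of controlled degree'', which is exactly the hypothesis of Lemma~\ref{lem:2dim}; this requires augmenting $\mathcal{F}$ with a constant number of coordinate polynomials isolating the correct branch of the CAD while preserving the degree bound. A subsidiary point is verifying that $V^{*}$ is not identically $+\infty$ on any subinterval, which follows from compactness of the fibers of $f|_A$. Once these points are settled, the change of variable and the invocation of Lemma~\ref{lem:2dim} are mechanical.
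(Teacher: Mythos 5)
Your argument is correct, but it takes a genuinely different route from the paper's. The paper also localizes to a unit ball around each $x$ and studies $v(x,u)=\sup\{|g(y)|: y\in A,\ \|y-x\|\le 1,\ u|f(y)|=1\}$, but it encodes the \emph{graph of the sup function itself} by a universally quantified formula and applies $\BElim$ to eliminate all $n+3$ quantified variables $(Y,V,W',Z)$ at once, so that the graph of $v(x,\cdot)$ is directly a union of elements of $\Cc(\mathcal{Q}_x)$ with $\mathcal{Q}_x\subset\R[U,W]$ of degree $<(8d)^{2n+10}$, and Lemma~\ref{lem:2dim} applies verbatim. You instead eliminate only the $n$ geometric variables $X$, obtaining the planar image $T=\{(f(x),g(x))\}$ as a union of elements of $\Cc(\mathcal{F})$ with the better degree bound $(8d)^{2n+4}$, and then recover the envelope $V^*$ by two-dimensional cell reasoning. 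The trade-off is that you must justify that the topmost (and bottommost) cell of $T$ over a small interval $(0,u_0)$ is a section lying in $Z(F)$ for some $F\in\mathcal{F}$: this follows because the fibers $T_u$ are compact (this is exactly where your localization to a ball is needed, as in the paper), so the extremal cell is a section, and it cannot sit inside a $2$-dimensional element of $\Cc(\mathcal{F})$ (openness would contradict maximality), hence its defining sign condition contains an equation. Once you have $F(u,\xi(u))\equiv 0$ on $(0,u_0)$ you do not actually need to force the extended graph of $\tilde V$ into the hypotheses of Lemma~\ref{lem:2dim}; it is cleaner to invoke directly the growth estimate \cite[Proposition~2.6.1]{BCR98} that Lemma~\ref{lem:2dim} itself rests on, after the substitution $s=1/u$ (which at most doubles the degree). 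With that, your exponent $p+1\le 2(8d)^{2n+4}+1$ sits well inside the stated bound $(8d)^{2n+10}$; also note, as the paper does implicitly, that enlarging $N$ preserves continuity of the extension, so the pointwise exponents obtained from the per-point localization can be replaced by their common upper bound.
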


\begin{proof}
Suppose that $A = \RR(\Phi,\R^n)$
$\mathrm{graph}(f) = \RR(\Phi_f, \R^{n+1})$, and
$\mathrm{graph}(g) = \RR(\Phi_g, \R^{n+1})$, where
where $\Phi$ is a
$\mathcal{P}$-formula,
$\Phi_f$ is a
$\mathcal{P}_f$-formula,
and $\Phi_g$ is a
$\mathcal{P}_g$-formula.

For each $x \in A, u \in \R$, we define
\[
A_{x,u} = \{y \in A \;\mid\; ||y - x|| \leq 1, u |f(y)| = 1 \}.
\]
We define 
$\Theta(X,U,Y,V)$  to be the quantifier-free formula
$$
\displaylines{
\Phi(Y) \; \wedge \;  (||Y - X||^2 - 1 \leq 0) \cr
\;\wedge\; \cr
((V > 0 \; \wedge \; (U V  - 1 = 0)) \;\vee \; ((V < 0) \; \wedge \;  (U V +1 =0)))
\cr \;\wedge \; \cr
\phi_f(Y,V).
}
$$
Observe that for each $x\in A$ and $u \in \R$, 
\[
\RR(\Theta(x,u,\cdot,\cdot),\R^{n+1}) = \mathrm{graph}(f|_{A_{x,u}}).
\]

The semi-algebraic set $A_{x,u}$ is closed and bounded, and we define the semi-algebraic function
\[
v(x,u) = \begin{cases}
0 \mbox{ if } A_{x,u} = \emptyset, \\
\sup\{|g(y)| \;\mid\; y \in A_{x,u}\} \mbox{ otherwise}.
\end{cases}
\]

Let $\Lambda_0(X,U,W)$ denote the following the first-order (quantified) formula:

\begin{multline*}
\forall(Y,V,Z) 
(W \geq 0) \;
\wedge \;
(\Theta(X,U,Y,V) \;\wedge\;
\phi_g(Y,Z)) \;
\Longrightarrow\\
((Z \geq 0) \; \wedge \;  (W \geq Z)) \; \vee  \; ((Z \leq 0) \;  \wedge \;  (W \geq -Z))).
\end{multline*}

Finally, let $\Lambda(X,U,W)$ denote the formula
\[
(\forall W') \; \Lambda_0(X,U,W') \; \Longrightarrow \; (0 \leq W \leq W'). 
\]

Notice that for $\Lambda(x,u,w)$ is true if and only
if $w = v(x,u)$.
Also notice that for each $x\in A$, 
$\Lambda(x,U,W)$ is equivalent to a formula 
\[
\forall (Y,V,W',Z) \; \Psi_x(Y,V,Z,U,W,W')
\]
to a $(n+5)$-ary $\mathcal{P}_x$-formula $\Psi_x$, 
for some finite set  $\mathcal{P}_x \subset \R[Y,V,Z,U,W,W']_{\leq \max(d,2)}$ 
(since we assume $d \geq 2$).

Let 
\[
\mathcal{Q}_x = \BElim_{Y,V,W',Z}(\mathcal{P}_x) \subset \R[U,W]
\]
(see Proposition~\ref{prop:BElim}).

Then, using the degree bound in Proposition~\ref{prop:BElim} we have that 
for each $Q \in \mathcal{Q}_x$, $\deg(Q) < (8 d)^{2(n+3) + 4} = (8 d)^{2n +10}$.

It now follows from Lemma~\ref{lem:2dim} that there exists $c = c(x)$, such that
for all $u \geq c(x)$,
\[
|v(x,u)| \leq c \cdot u^p,
\]
with $p < (8 d)^{2n +10}$.

This means that 
\[
|f(y)|^p |g(y)| \leq c(x)
\]
on $\{y \in A \; \mid\; f(y) \neq 0 \ \mbox{ and } \ ||y - x|| \leq 1 \}$ for $|f(y)|$ sufficiently small. The function
$f^{N}g$ extended by $0$ is then semi-algebraic and continuous at $x$,
where 
$N = p+1 \leq (8 d)^{2n+10}$.
This completes the proof. 
\end{proof}

\begin{theorem}
\label{thm:2.6.6}
Let $d \geq 2$, and  
\[
\mathcal{P} \subset \R[X_1,\ldots,X_n]_{\leq d},
\mathcal{P}_f, \mathcal{P}_g \subset \R[X_1,\ldots,X_n,Y]_{\leq d}.
\]
Let $A \subset \R^n$ be a closed  $\mathcal{P}$-semi-algebraic set,
$f,g: A \rightarrow \R$ be continuous semi-algebraic functions  whose
graphs are $\mathcal{P}_f$-semi-algebraic
respectively $\mathcal{P}_g$-semi-algebraic set, and such that
$f^{-1}(0) \subset g^{-1}(0)$.
Then there exist 
$N = (8 d)^{2(n+7)}$
and a continuous semi-algebraic function
$h:A \rightarrow \R$ such that $g^N = h f$ on $A$.
\end{theorem}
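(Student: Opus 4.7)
The plan is to reduce Theorem~\ref{thm:2.6.6} to Proposition~\ref{prop:2.6.4} via a ``lifting'' trick that replaces $f$ and $g$ by coordinate projections living two dimensions higher. Define
\[
\tilde A := \{(x, u, v) \in \R^{n+2} \mid x \in A,\ (x,u) \in \mathrm{graph}(f),\ (x,v) \in \mathrm{graph}(g)\},
\]
and set $\tilde f(x,u,v) := u$, $\tilde g(x,u,v) := v$. Since $A$ is closed and $f, g$ are continuous, $\tilde A$ is the graph of the continuous map $x \mapsto (f(x), g(x))$ and is therefore closed in $\R^{n+2}$; moreover it is cut out by the polynomials in $\mathcal{P} \cup \mathcal{P}_f \cup \mathcal{P}_g$, all of degree $\leq d$ after the obvious renaming of variables into $\R[X_1,\ldots,X_n,U,V]$. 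The map $\phi: A \to \tilde A$, $x \mapsto (x, f(x), g(x))$, is a semi-algebraic homeomorphism, and the hypothesis $f^{-1}(0) \subset g^{-1}(0)$ transfers to $\tilde f^{-1}(0) \subset \tilde g^{-1}(0)$ on $\tilde A$.

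I will apply Proposition~\ref{prop:2.6.4} in ambient dimension $n+2$, taking $\tilde g$ in the role of ``$f$'' and $1/\tilde f$ in the role of ``$g$''. The function $1/\tilde f$ is well defined and continuous on $\tilde A \cap \{\tilde g \neq 0\}$, since $v \neq 0$ forces $g(x) \neq 0$, hence $f(x) \neq 0$, hence $u \neq 0$. Its graph inside $\R^{n+3}$ is
\[
\{(x,u,v,w) \mid (x,u,v) \in \tilde A,\ v \neq 0,\ uw - 1 = 0\},
\]
which is describable by polynomials in $\R[X_1,\ldots,X_n,U,V,W]_{\leq d}$ (the only new polynomial $UW - 1$ has degree $2 \leq d$). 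Similarly, the graph of $\tilde g$ is cut out inside $\R^{n+3}$ by the degree-one polynomial $W - V$ together with the formula for $\tilde A$. Proposition~\ref{prop:2.6.4} therefore applies with its ``$n$'' equal to $n+2$, and yields an integer
\[
\tilde N \leq (8d)^{2(n+2) + 10} = (8d)^{2(n+7)}
\]
and a continuous semi-algebraic function $\tilde h: \tilde A \to \R$ that equals $\tilde g^{\tilde N}/\tilde f$ on $\tilde A \cap \{\tilde g \neq 0\}$ and equals $0$ on $\tilde A \cap \{\tilde g = 0\}$.

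Multiplying by $\tilde f$ gives $\tilde h \cdot \tilde f = \tilde g^{\tilde N}$ on all of $\tilde A$: on $\{\tilde g \neq 0\}$ this is immediate, and on $\{\tilde g = 0\} \supset \{\tilde f = 0\}$ both sides vanish. Pulling back through $\phi$, the function $h := \tilde h \circ \phi$ is continuous and semi-algebraic on $A$ and satisfies $g^N = h \cdot f$ with $N := \tilde N \leq (8d)^{2(n+7)}$, as claimed.

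The main obstacle is the degree bookkeeping so that the exponent comes out exactly as $(8d)^{2(n+7)}$. A naive alternative---working inside $\R^n$ and describing $\mathrm{graph}(1/f)$ by substituting $Y \to 1/Y$ in the $\mathcal{P}_f$-formula for $\mathrm{graph}(f)$---would inflate polynomial degrees by a factor of $d$ and yield a bound like $(16d)^{2n+10}$ that does not fit the claimed estimate. The lifting trick above, by contrast, turns $\tilde f, \tilde g$ into coordinate projections so that every required polynomial retains degree $\leq d$, at the cost of only enlarging the ambient dimension by $2$; this replaces $n$ by $n+2$ in the exponent of Proposition~\ref{prop:2.6.4} and produces exactly the target bound $(8d)^{2(n+7)}$.
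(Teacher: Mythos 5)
Your proof is correct and is essentially identical to the paper's own argument: the paper also lifts to $\widetilde{A}=\{(x,f(x),g(x))\mid x\in A\}\subset\R^{n+2}$, applies Proposition~\ref{prop:2.6.4} in ambient dimension $n+2$ with $g$ playing the role of ``$f$'' and $1/f$ (whose graph is described via the degree-$2$ polynomial $UW-1$) playing the role of ``$g$'', and obtains the same exponent $(8d)^{2(n+2)+10}=(8d)^{2(n+7)}$ before pulling back to $A$.
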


\begin{proof}
Suppose that $A = \RR(\Phi,\R^n)$
$\mathrm{graph}(f) = \RR(\Phi_f, \R^{n+1})$, and
$\mathrm{graph}(g) = \RR(\Phi_g, \R^{n+1})$, where
where $\Phi$ is a
$\mathcal{P}$-formula,
$\Phi_f$ is a
$\mathcal{P}_f$-formula,
and $\Phi_g$ is a
$\mathcal{P}_g$-formula.

Let $\widetilde{A} = \{(x,f(x),g(x)) | x \in A\} \subset \R^{n+2}$.
The function $1/f$ is continuous semi-algebraic on $\{(x,u,v) \in \widetilde{A} \; \mid \;  g(x) \neq 0\}$,
and its graph is defined by the formula
\[
\Phi_g(X,V) \wedge (V \neq 0) \wedge \Phi_f(X,U) \wedge (UW - 1=0)
\]

Moreover using Proposition~\ref{prop:2.6.4} there exists $N \leq  (8 d)^{2(n+2) + 10} = (8d)^{2(n + 7)}$ such that the function 
$\widetilde{h}:\widetilde{A}  \rightarrow \R$ defined by
\[
\widetilde{h}(x,u,v) = \begin{cases}
0 \mbox{ if } f(x) = 0, \\
g^N(x)/f(x) \mbox{ if } f(x) \neq 0.
\end{cases}
\]

Since $\widetilde{h}$ does not depend on $u,v$, we get a 
continuous and semi-algebraic function $h(x) = \widetilde{h}(x,f(x),g(x))$ on $A$, and $g^N = h f$.
\end{proof}

\begin{proof}[Proof of Theorem~\ref{thm:Lojasiewicz}]
In order to prove inequality~\eqref{eqn:thm:Lojasiewicz}, 
use Theorem~\ref{thm:2.6.6} with $c = \sup_{x \in A} |h(x)|$, noticing that $c$ exists since 
$A$ is assumed to be closed and bounded.

We now prove inequality~\eqref{eqn:thm:Lojasiewicz:1}.
The set of $c \subset \mathbb{R}, c > 0$ for which inequality \eqref{eqn:thm:Lojasiewicz:0} holds for all $x \in A$ is defined by
\[
\Theta(C) := (C > 0) \wedge \left(\forall X,U,V) \left(\phi(X) \wedge \phi_f(X,U) \wedge  \phi_g(X,V) \right) \Longrightarrow
( V^{2 N} \leq C^2 \cdot U^2)\right),
\]
where $\phi$ is a $\mathcal{P}$-formula describing $A$, and $\phi_f,\phi_g$ are 
$\mathcal{Q}$-formulas describing the graphs of $f$ and $g$ 
and $N = (8 d)^{2(n+7)}$.

Using Theorem~\ref{14:the:tqe}  we obtain that 
$\Theta(C)$ is equivalent to a quantifier-free formula $\widetilde{\Theta}(C)$ such that
the bit sizes of the coefficients of the polynomials appearing in $\widetilde{\Theta}(C)$ is
bounded by $\tau d^{O(n^2)}$ and their degrees are bounded by $d^{O(n^2)}$. Now using 
Cauchy's bound (\cite[Lemma 10.2]{BPRbook2posted}), the largest real root amongst the real roots of the polynomials appearing in $\widetilde{\Theta}(C)$ is bounded by $2^{\tau d^{O(n^2)}}$. It follows that there exists $c = 2^{\tau d^{O(n^2)}}$ for which the inequality~\eqref{eqn:thm:Lojasiewicz:0} holds.

Using the repeated squaring technique (see below) at the cost of introducing $O(n \log d)$ new variables, it is possible write another universally quantified formula, namely
\begin{multline*}
\Theta'(C) := (C > 0) \wedge (\forall T_1,\ldots,T_M,,X,U,V) (\phi(X) \wedge \phi_f(X,U) \wedge \phi_g(X,V) \wedge \cr
(T_1= V) \wedge (T_2 = T_1^2) \wedge \cdots \wedge (T_{M} = T_{M-1}^2)) \Longrightarrow
( T_M^2 \leq C^2 \cdot U^2)),
\end{multline*}
equivalent to $\Theta(C)$ in which all the polynomials appearing have degrees
bounded by $d$ (instead of $d^{O(n)}$ as in the formula $\Theta$). 
The number of quantifier variables in the formula $\Theta'$ equals $M+n+2$, where 
$M = O(\log N) = O(n \log d)$.

Now using Theorem~\ref{14:the:tqe} and Cauchy's bound as before we obtain a bound of
$2^{\tau d^{O(n\log d)}}$ on $c$.
\end{proof}

\hide{
\begin{remark}
Theorem~\ref{thm:Lojasiewicz} in particular improves the {\L}ojasiewicz exponent in~\eqref{ineq:Kurdyka_bound}. To see this, let us define the semi-algebraic functions $f,g: S \to \mathbb{R}$ defined by $f(x) = \|F(x)\|$ and $g(x)= \distance\big(x,F^{-1}(0) \cap S\big)$, where $f^{-1}(0) \subset g^{-1}(0)$. Then Theorem~\ref{thm:Lojasiewicz} yields the {\L}ojasiewicz exponent $d^{O(n^2)}$, where $d$ is an upper bound on the degree of polynomials in $F$ and the degree of polynomials describing the semi-algebraic set $S$. Notice that the upper bound does not include the number of polynomial equations.
\end{remark}}

\subsection{Proof of Theorem~\ref{thm:error_bound_semialg_set}}
First, we need the following lemma.

\begin{lemma}
\label{upper_bound_dist_func}
Let $\mathcal{P} \subset \R[X_1,\ldots,X_n]_{\le d}$ and $S \subset \R^n$ 
a $\mathcal{P}$-semi-algebraic set. Then there
exists $\mathcal{Q} \subset \R[X_1,\ldots,X_n,U]$ such that the graph of the function 
$\distance(\cdot,S):\R^n \rightarrow \R$ is a 
$\mathcal{Q}$-semi-algebraic set and $\max_{Q \in \mathcal{Q}} \deg(Q) = d^{O(n)}$. 
\end{lemma}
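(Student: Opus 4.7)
\emph{Plan.}
The strategy is to describe the graph of $\dist(\cdot,S)$ by a formula involving a \emph{single} block of quantifiers over a common block of $n$ new variables $y$, and then to apply Proposition~\ref{prop:BElim} once. A naive approach using $\exists y$ for the ``achievement'' condition together with $\forall y$ for the ``lower bound'' condition would amount to a two-block quantifier elimination with alternation and would incur an extra factor of $n$ in the exponent, yielding $d^{O(n^2)}$ instead of the desired $d^{O(n)}$.

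\emph{Step 1 (reduction to closed $S$).} The closure of a $\mathcal{P}$-semi-algebraic set is again $\mathcal{P}$-semi-algebraic (using the same family $\mathcal{P}$ with suitably modified sign conditions), and $\dist(\cdot,S)=\dist(\cdot,\bar S)$. I may therefore assume $S$ is closed (and non-empty) and choose a $\mathcal{P}$-formula $\Phi$ with $S=\RR(\Phi,\R^n)$.

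\emph{Step 2 (single-block description).} Because $S$ is closed, the infimum defining $\dist(x,S)$ is attained, and for $u\ge 0$ the equality $u=\dist(x,S)$ is equivalent to the conjunction
\begin{equation*}
\bigl(\exists y:\ \Phi(y)\wedge\|x-y\|^2\le u^2\bigr)\ \wedge\ \neg\bigl(\exists y:\ \Phi(y)\wedge\|x-y\|^2<u^2\bigr).
\end{equation*}
Crucially, both quantifiers are existential and range over the same block $y=(Y_1,\ldots,Y_n)$, and involve the same finite family of polynomials.

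\emph{Step 3 (a single application of BElim).} Let $\mathcal{P}'\subset\R[X_1,\ldots,X_n,U,Y_1,\ldots,Y_n]$ consist of the polynomials of $\mathcal{P}$ with $X_i$ renamed to $Y_i$, together with $\sum_{i=1}^n(X_i-Y_i)^2-U^2$. Since $d\ge 2$, every polynomial in $\mathcal{P}'$ has degree at most $d$. Applying Proposition~\ref{prop:BElim} with the block $Y$ of length $k=n$ and parameter variables $(X,U)$ yields
\[
\mathcal{Q}_0=\BElim_Y(\mathcal{P}')\subset\R[X_1,\ldots,X_n,U],
\]
with $\deg(Q)<(8d)^{2n+4}=d^{O(n)}$ for every $Q\in\mathcal{Q}_0$.

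\emph{Step 4 (conclusion).} By Proposition~\ref{prop:BElim}, for each connected component $C\in\Cc(\mathcal{Q}_0)$ the set of realizable sign conditions $\SIGN(\mathcal{P}'(x,u,\cdot))$ (viewed as polynomials in $Y$ with parameters $(x,u)$) is constant as $(x,u)$ varies over $C$. Both existential statements appearing in Step~2 are determined by this realizable-sign-condition pattern, so their truth values are constant on $C$. Therefore the graph of $\dist(\cdot,S)$ (contained in the half-space $U\ge 0$) is a union of such components, and is $\mathcal{Q}$-semi-algebraic for $\mathcal{Q}:=\mathcal{Q}_0\cup\{U\}$; adjoining $U$ (degree one) does not affect the degree bound.

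The main conceptual obstacle is recognizing that the two-sided description of $\dist(\cdot,S)$ can be recast with both quantifiers existential (by negating one of them) and ranging over the \emph{same} variables $y$, so that a single pass of BElim over $y$ suffices; once this is observed, the exponent $2k+4$ in Proposition~\ref{prop:BElim} applied with $k=n$ delivers the bound $d^{O(n)}$ directly.
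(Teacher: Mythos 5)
Your overall strategy --- keep every eliminated variable in a single block of size $O(n)$ so that only one ``expensive'' elimination is performed --- is exactly the right idea and is also what drives the paper's proof. However, there are two genuine gaps in the execution, the second of which is fatal as written. First, Step~1 asserts that the closure of a $\mathcal{P}$-semi-algebraic set is again $\mathcal{P}$-semi-algebraic ``with suitably modified sign conditions.'' This is false in general: for $P=X^2(X-1)$ the set $\{P>0\}=(1,\infty)$ has closure $[1,\infty)$, which is not a Boolean combination of the sign conditions of $P$ because $0$ and $1$ realize the same sign condition. The paper avoids this issue entirely by working with $W_x=\{t \mid \exists y\in S,\ t=\|x-y\|\}$ and characterizing $\distance(x,S)$ as $\inf W_x$, which is correct whether or not the infimum is attained.

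The more serious problem is Step~4. Proposition~\ref{prop:BElim} (and Lemma~\ref{lem:BElim}) only guarantee that the graph is a union of elements of $\Cc(\BElim_Y(\mathcal{P}'))$, i.e.\ a union of \emph{semi-algebraically connected components} of realizations of sign conditions on $\mathcal{Q}_0$. That is strictly weaker than being a $\mathcal{Q}_0$-semi-algebraic set in the sense of Definition~\ref{def:P-formula}: a single connected component of a realization of a sign condition need not be expressible as a quantifier-free Boolean combination of sign conditions on the same polynomials, and distinct components of one sign condition may carry different truth values of your existential statements. The lemma's conclusion --- and its downstream use in Theorem~\ref{thm:error_bound_semialg_set}, which must feed an actual $\mathcal{Q}$-formula for the graph into Theorem~\ref{thm:Lojasiewicz} --- requires a genuine quantifier-free formula. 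The repair is to replace $\BElim$ by the effective quantifier elimination of Theorem~\ref{14:the:tqe}, applied separately to each of your two single-block existential formulas; this does output quantifier-free formulas, with the same $d^{O(n)}$ degree bound for one block of $n$ variables, and their Boolean combination then proves the lemma for closed $S$. The paper instead applies Theorem~\ref{14:the:tqe} once to describe $W$, and then pins down $\inf W_x$ via Thom encodings of the roots of the resulting polynomials in $T$, so that the remaining quantifiers range over single variables only and cost just $D^{O(1)}=d^{O(n)}$; that route also dispenses with any closedness assumption on $S$.
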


Before proving Lemma~\ref{upper_bound_dist_func} we need a new notion (that of Thom encoding of real roots 
of a polynomial) that will be needed in the proof.
The following proposition appears in \cite[Proposition~2.36]{BPRbook2posted}.

\begin{proposition} [Thom's Lemma] \cite[Proposition~2.36]{BPRbook2posted} 
\label{prop:Thom}
Let $P \subset \R[X]$ be a univariate polynomial, $\Der(P)$ the set of derivatives of $P$
  and $\sigma \in \{-1,0,1\}^{\Der(P)}$. Then $\RR ( \sigma ,\R)$ is either empty, a
  point, or an open interval.
\end{proposition}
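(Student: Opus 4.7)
The plan is to prove Thom's Lemma by induction on $\deg(P)$. Let $d = \deg(P)$ and list the derivatives as $P^{(0)} = P, P^{(1)} = P', \ldots, P^{(d)}$. The base case $d = 0$ is trivial: $P$ is a nonzero constant, $\Der(P) = \{P\}$, and $\RR(\sigma,\R)$ is either all of $\R$ (a degenerate open interval) or empty, depending on whether $\sigma(P)$ matches the sign of that constant.

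For the inductive step I would restrict $\sigma$ to the subset $\Der(P') = \{P', P'', \ldots, P^{(d)}\}$, obtaining a sign condition $\sigma'$. Since $P'$ has degree $d - 1$, the inductive hypothesis applies and gives that $\RR(\sigma',\R)$ is empty, a single point, or an open interval $I$. The conclusion in the first two cases is immediate, since $\RR(\sigma,\R) \subseteq \RR(\sigma',\R)$. The substantive step is the case where $\RR(\sigma',\R) = I$ is an open interval: on $I$ the polynomial $P'$ has the constant sign $\sigma(P')$. I would split this further into two cases depending on $\sigma(P')$.

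If $\sigma(P') \neq 0$, then $P$ is strictly monotone on $I$, and so $\{x \in I \mid \sign(P(x)) = \sigma(P)\}$ is either empty, a point (when $\sigma(P) = 0$), or an open sub-interval of $I$ (when $\sigma(P) \in \{-1,1\}$), by the intermediate value theorem together with strict monotonicity. If on the other hand $\sigma(P') = 0$, then $P'$ vanishes on a whole open interval, forcing $P' \equiv 0$; but then $P$ itself is constant, and one checks directly that $\RR(\sigma,\R)$ is either $I$ or empty. In every case the set $\RR(\sigma,\R)$ is empty, a point, or an open interval, completing the induction.

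The main subtlety I would watch for is precisely this interaction between the sign of $P$ and the strict monotonicity forced by a nonzero sign of $P'$: one must invoke the fact that a strictly monotone continuous function on an interval attains each value at most once, so the level set $\{P = 0\} \cap I$ is a single point or empty, while the superlevel and sublevel sets $\{P > 0\} \cap I$ and $\{P < 0\} \cap I$ are open sub-intervals. The rest of the proof is a clean unwinding of the induction; no quantitative control over degrees is needed, since the statement is purely qualitative.
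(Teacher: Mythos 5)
Your argument is correct and is the standard induction-on-degree proof of Thom's Lemma; the paper does not prove this statement itself but cites it from \cite[Proposition~2.36]{BPRbook2posted}, where essentially the same induction (restricting the sign condition to $\Der(P')$ and exploiting the strict monotonicity of $P$ when $P'$ has constant nonzero sign on the interval) is carried out. One cosmetic remark: in your sub-case $\sigma(P')=0$ with $\RR(\sigma',\R)$ an open interval, the conclusion $P'\equiv 0$ actually contradicts $\deg(P)\geq 1$, so that case is vacuous in the inductive step rather than reducing to a constant $P$ --- but either reading yields the stated trichotomy, so there is no gap.
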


Note that it follows immediately from Proposition~\ref{prop:Thom}, that for any
$P \in \R[X]$ and $x \in \R$ such that $P(x) = 0$, the sign condition $\sigma$ realized by
$\Der(P)$ at $x$ characterizes the root $x$. We call $\sigma$ the \emph{Thom encoding} of the root $x$
of $P$.

\begin{proof}[Proof of Lemma~\ref{upper_bound_dist_func}]
Let $\Phi$ be a $\mathcal{P}$-formula such that $\RR(\Phi,\R^n) = S$.
Let
\[
W = \{(x,t) \;\mid\; \exists y \in S \mbox{ with } t = ||x-y||\}.
\]

Then for each $x \in \R^n$, 
\[
\distance(x,S) = \inf W_x,
\]
where $W_x$ denotes the one-dimensional fiber of $W$ over $x$ with respect to the projection 
along the $t$ coordinate.

It is also clear from the definition that $W$ is the image under projection along the $y$  coordinates) of the semi-algebraic set
$V \subset \R^n \times \R^n \times \R$
defined by 
\[
V = \{(x,y,t) \;\mid\; y \in S \mbox{ and  } t = ||x-y||\}.
\]
Let $\Theta(X,Y,T)$ be the formula
\[
\Phi(Y) \wedge (T^2 = ||X-Y||^2) \wedge (T \geq 0).
\]
Then, it is clear that $\Theta$ is a $\mathcal{P}'$-formula for some finite subset 
$\mathcal{P}' \subset \R[X,Y,T]_{\leq d}$ (assuming $d \geq 2$), and moreover
$\RR(\Theta, \R^{2n+1}) = V$.

Now apply Theorem~\ref{14:the:tqe} to the quantified formula
$(\exists Y) \Theta(X,Y,T)$ and obtain a quantifier-free $\mathcal{F}$-formula,
$\widetilde{\Theta}$ equivalent to $\Theta$, where 
$\mathcal{F}$ is some finite subset of $\R[X,T]$ and 
\[
D = \max_{F \in \mathcal{F}} \deg(F) = d^{O(n)}.
\]

Notice that $\RR(\widetilde{\Theta},\R^{n+1}) = W$, and for each $x \in \R^n$,
\[
\inf W_x = \distance(x,S)
\]
is a real root of some polynomial in $\mathcal{F}$.

Let $\mathcal{F} = \{F_1,\ldots,F_N\}$. 
We denote 
\[
\Der_T(F_i) = \{F_i, F_i',\ldots,F_i^{(D)} \}
\]
the set of derivatives of $F_i$ with respect to $T$.

For $1 \leq i \leq N$, and $\sigma \in \{-1,0,1\}^{\Der_T(F_i)}$, denote by $\Psi_{i,\sigma}$ the quantifier-free formula,
\[
(\forall T) \left(\bigwedge_{0 \leq j \leq D} (\sign(F_i^{(j)}) = \sigma(F_i^{(j)}))\right) \Longrightarrow \left((\forall T')
\widetilde{\Theta}(X,T') \Longrightarrow (T \leq T')\right).
\]

Using Theorem~\ref{14:the:tqe} one more time, let $\widetilde{\Psi}_{i,\sigma}$ be a quantifier-free formula
equivalent to $\Psi_{i,\sigma}$ and let the set of polynomials appearing in $\widetilde{\Psi}_{i,\sigma}$ be denoted by
$\mathcal{Q}_{i,\sigma}$. 

The semi-algebraic set $\RR(\widetilde{\Psi}_{i,\sigma},\R^n)$ is the set consisting of points $x \in \R^n$, such that the $\distance(x,S)$ equals the (at most one) real root of the polynomial $F_i(x,T)$ with Thom encoding $\sigma$. Notice that the maximum degree of polynomials in $\mathcal{Q}_{i,\sigma}$
is bounded by $D^{O(1)} = d^{O(n)}$.

Finally, let 
\[
\Psi = \bigwedge_{i,\sigma} \left(\widetilde{\Psi}_{i,\sigma} \Longrightarrow \left(\bigwedge_{0 \leq j \leq D} (\sign(F_i^{(j)}) = \sigma(F_i^{(j)}))\right)\right),
\]
and 
\[
\mathcal{Q} = \bigcup_{1 \leq i \leq N} \left(\Der_T(F_i) \cup \bigcup_{\sigma \in \{-1,0,1\}^{\Der_T(F_i)}} \mathcal{Q}_{i,\sigma}\right).
\]
It is clear from the above construction that, $\Psi$ is a $\mathcal{Q}$-formula, and
$\RR(\Psi,\R^{n+1})$ is the graph of the function $\distance(\cdot,S)$, and the degrees of the polynomials in
$\mathcal{Q}$ are bounded by $d^{O(n)}$. This proves the lemma.
\end{proof}

\begin{remark}
In~\cite[Proposition~2.2.8]{BCR98} and~\cite[Theorem~7]{S91}, the graph of the semi-algebraic function $\distance(x,\mathcal{M})$ is described by the following quantified formula with two blocks of quantifiers
\begin{equation}\label{distance_formula}
    (\exists Y) \ (\forall Y') \ \ (\neg \Psi(Y') \vee (\Psi(Y) \wedge (||X-Y||^2 = T^2) \wedge (T \ge 0) \wedge (||X-Y'||^2\ge T^2)),  
\end{equation}
where $(\exists Y)$ and $(\forall Y')$ are
two blocks of variables each of size $n$ with different quantifiers. The effective quantifier elimination 
(Theorem~\ref{14:the:tqe})
applied to~\eqref{distance_formula} yields a quantifier-free formula with polynomials having degrees bounded by  $d^{O(n^2)}$, where $d$ is an upper bound on the  degrees of the polynomials in $\Psi$. 
The formula given in Lemma~\ref{upper_bound_dist_func} describing the graph of the same
function involves polynomials of degrees at most $d^{O(n)}$ (though it may involve many more polynomials
than the one in \eqref{distance_formula}) 
which is an improvement over the bound of $d^{O(n^2)}$ obtained by the above argument. Note that for our purposes, the degrees of the polynomials
appearing in the formula is the important quantity -- the number of polynomials appearing is not relevant.
\end{remark}

\begin{proof}[Proof of Theorem~\ref{thm:error_bound_semialg_set}]
It is easy to see that the residual function $\psi(x)$ defined in~\eqref{func:residual_function} satisfies 
\begin{align*}
    \distance(x,M) = 0 \ \ \Longleftrightarrow \ \ \psi(x) = \sum_{j=1}^{s} |h_j(x)| + \sum_{i=1}^{r} \max\{g_i(x),0\} = 0.
\end{align*}

\vspace{5px}
\noindent
The graph of $\psi(x)$ can be described using a quantifier-free $\mathcal{P}$-formula with $\mathcal{P} \subset \R[X_1,\ldots,X_n]_{\le d}$ as follows (note that we do not care about the cardinality of $\mathcal{P}$).

To see this first observe that for all $x \in \R^n$, and $\sigma \in \{0,1,-1\}^{[1,s]}$
if $\sign(h_j(x)) = \sigma(j), j \in [1,s]$, then
\[
\sum_{j=1}^{s} |h_j(x)| = \sum_{j=1}^s \sigma(j) h_j(x).
\]
Similarly, 
for all $x \in \R^n$, and $\tau \in \{0,1,-1\}^{[1,r]}$
if $\sign(g_i(x)) = \tau(i), i \in [1,r]$, then
\[
\sum_{i=1}^{r} \max\{g_i(x),0\} = \frac{1}{2}\sum_{i=1}^{r} \tau(i)(1 + \tau(i)) g_i(x).
\]

It is now easy to verify that the following quantifier-free formula in $n+1$ variables:

\begin{multline}
\bigvee_{
\substack{
\sigma \in \{0,1,-1\}^{[1,s]} \\
\tau \in \{0,1,-1\}^{[1,r]}
}
}
\left(\bigwedge_{j=1}^{s}(\sign(h_j) = \sigma(j)) \wedge \bigwedge_{i=1}^{r}(\sign(g_i) = \tau(i))
\right)
\Longrightarrow \\
\left(T - \sum_{j=1}^s \sigma(j) h_j - \frac{1}{2}\sum_{i=1}^{r} \tau(i)(1 + \tau(i)) g_i=0
\right)
\end{multline}
describes the graph of $\psi$ and all polynomials occurring in it have degrees at most $d$.

Moreover, it follows from Lemma~\ref{upper_bound_dist_func} that 
the graph of $\distance(\cdot,M)$ can be defined by a quantifier-free formula involving 
polynomials in $n+1$ variables having degrees bounded by $d^{O(n)}$.
The first part of the theorem now follows from Theorem~\ref{thm:Lojasiewicz} after setting $f(x)= \psi(x)$, and $g(x)=\distance(x,M)$. 

\vspace{5px}
\noindent
In case that the $M$ is finite, it is possible to derive a sharper estimate of the error bound exponent. Suppose $M = \{p_1, \ldots, p_N \} \subset \R^n$.
In this case the graph of the distance function,
$\distance(X,M)$,
is described by the following formula

\[
\Theta(X,T) :=  (T \geq 0) \wedge \left(\bigwedge_{i=1}^N (T^2 \geq ||X - p_i||^2)\right) \wedge 
\left (\bigvee_{i=1}^{N} (T^2 = ||X - p_i||^2)\right).
\]

Notice that the degrees of the polynomials appearing in the quantifier-free formula $\Theta$ are
bounded by $2$. Furthermore, the graph of the residual function $\psi$ is defined by quantifier-free formula involving polynomials of degrees bounded by $d$. The second part of the theorem is now immediate from Theorem~\ref{thm:Lojasiewicz}.
\end{proof}

\begin{remark}
A weaker version of Theorem~\ref{thm:error_bound_semialg_set} can be directly obtained using the quantifier elimination. Let us define 
\begin{align*}
M_{\varepsilon}:=\{x \in \R^n \mid  \psi(x) = \varepsilon, \ x \in E\}.
\end{align*}
Then it suffices to describe the graph of the semi-algebraic function
\begin{align*}
\varphi: \varepsilon \mapsto \sup_{x \in M_{\varepsilon}} \distance(x,M).
\end{align*}
Let $\Phi(X,\varepsilon)$ be a quantifier-free formula describing $M_{\varepsilon}$ and $\Psi(X,T)$ be the quantifier-free formula describing the graph of $\distance(x,M)$ in the proof of Lemma~\ref{upper_bound_dist_func}. Then 
the graph of 
$\varphi$ is the lower envelope of the realization of the formula~\eqref{lower_envelope_formula}:
\begin{align}\label{lower_envelope_formula}
 (\forall X) \ (\exists T) \quad \neg (\Phi(X,\varepsilon) \wedge \Psi(X,T)) \vee (\varphi \ge T).  
\end{align}
By Lemma~\ref{upper_bound_dist_func} and the application of Theorem~\ref{14:the:tqe} to~\eqref{lower_envelope_formula}, the graph of $\varphi$ can be described by a quantifier-free $\mathcal{P}$-formula with $\mathcal{P} \subset \R[\varphi,\varepsilon]_{\le d^{O(n^2)}}$, which implies the existence of $P \in \mathcal{P}$ such that $P(\varphi,\varepsilon)= 0 $ for sufficiently small positive $\varepsilon$. By the Newton-Puiseux theorem~\cite[Theorems 3.2 and~4.1 of Chapter IV]{W78}, $\psi$ near $\varepsilon = 0$ can be expanded as
\begin{align*}
    \varphi(\varepsilon) = a_1 \varepsilon^{\gamma_1} + a_2 \varepsilon^{\gamma_2} + \ldots 
\end{align*}
where $0< \gamma_1 < \gamma_2 < \ldots$, and $a_i \in \R$. Furthermore, it follows from the Newton-Puiseux algorithm~\cite[Section~3.2 of Chapter IV]{W78} that $\gamma_1 \le 1/\deg(P)$,  where $\deg(P) = d^{O(n^2)}$. All this implies $\varphi^{\deg(P)} = O(\varepsilon)$ for sufficiently small positive $\varepsilon$, and thus 
\
\begin{align*}
   \distance(x,M)^{\deg(P)} \le \kappa \cdot \psi(x), \quad \mbox{for all } x \in E \mbox{ with sufficiently small }  \psi(x). 
\end{align*}
The case with $\dim M=0$ is analogous.
\end{remark}

\begin{proof}[Proof of Corollary~\ref{cor:semi-definite_sys}]
Notice that $M \cap \mathbb{S}^p_+$ can be redefined as a basic $\mathcal{Q}$-semi-algebraic set with $\mathcal{Q} \subset \R[X_1,\ldots,X_{p^2}]_{\le \max\{d,p\}}$ and $|\mathcal{Q}|=2^p+r-1$ as follows:
\begin{align*}
\big\{X \in \mathbb{S}^p \mid g_i(X) \le 0, \ i=1,\ldots,r, \  \det(X_{I}) \ge 0, \ \forall I \subseteq \{1,\ldots,p\}\big\},    
\end{align*}
where $X_{I}$ is a principal submatrix of $X$ indexed by $I$. We also define the residual function
\begin{align*}
    \psi(x):=\max\Big\{\distance(x,M),\max_{I \subseteq \{1,\ldots,p\}} \big(\max\{-\det(X_{I}),0\}\big)\Big\},
\end{align*}
which is a $\mathcal{Q}'$-semi-algebraic function with $\mathcal{Q}' \subset \R[X_1,\ldots,X_{p^2},Y]_{\le d^{O(p^2)}}$, see Lemma~\ref{upper_bound_dist_func} and the proof of Theorem~\ref{thm:error_bound_semialg_set}. Then by applying Theorem~\ref{thm:error_bound_semialg_set}, Lemma~\ref{upper_bound_dist_func}, and  Remark~\ref{rem:arbitrary_residual_functions} to $M \cap \mathbb{S}^p_+$, $E$, and $\psi(x)$ we get 
\begin{multline*}
\distance(x,M \cap \mathbb{S}_+^p)^{\rho} \le \kappa' \cdot \max\Big\{\distance(x,M),\max_{I \subseteq \{1,\ldots,p\}} \big(\max\{-\det(X_{I}),0\}\big)\Big\}, \\ \mbox{ for all } x \in E,
\end{multline*}
for some $\kappa' > 0$ and $\rho=\max\{d,p\}^{O(p^4)}$. The rest of the proof follows from the arguments in~\cite{LP98}. Let $\lambda_i(X_{I})$ be the $i$th smallest eigenvalue of $X_{I}$. By the boundedness of $E$, there exists some positive $r$ such that $|\lambda_i(X_{I})| \le r$ for all $i=1,\ldots,|I|$. Furthermore, by the interlacing property of eigenvalues of $X$, we have
\begin{align*}
\lambda_{\min}(X_{I}):=\lambda_{1}(X_{I}) \ge \lambda_{\min}(X),
\end{align*}
which yields 
\begin{align*}
    \det(X_{I}) = \prod_{i=1}^{|I|} \lambda_{i}(X_{I}) \ge r^{|I|-1} \lambda_{\min}(X_{I}) \ge  r^{|I|-1} \lambda_{\min}(X).  
\end{align*}
Consequently,
\begin{align*}
 \max\{-\det(X_{I}),0\} \le r^{|I|-1} \cdot \max\{-\lambda_{\min}(X),0\}   
\end{align*}
for every $I \subseteq \{1,\ldots,p\}$, which completes the proof.  
\end{proof}

\subsection{Proof of Theorem~\ref{thm:uniform}}
In the proof of Theorem~\ref{thm:uniform} we need the following ingredient which is proved in 
\cite[Theorem 2.5]{Basu9}.

\begin{proposition}[Quantitative cylindrical definable cell decomposition]
\label{prop:qcad}
Fix an o-minimal expansion of the real closed field $\R$.
Let $\mathcal{A}$ be a definable family of subsets of $\R^n$ parametrized by the definable set $A$. Then there
exist a finite set $J$ and definable families $(\mathcal{C}_j)_{j \in J}$ of subsets of $\R^n$ each parametrized by
$A^{N(n)}$ where $N(n) = 2(2^n-1)$ having the following property.
Suppose, $A' \subset A$ is a finite subset. Then, there exists a cylindrical decomposition
$\mathcal{D} = (\mathcal{D}_1,\ldots,\mathcal{D}_n)$ of $\R^n$,
such that for each $i, 1 \leq i \leq n$,
the set of cells of the cylindrical decomposition $(\mathcal{D}_1,\ldots,\mathcal{D}_i)$ of $\R^i$ is a subset of the set of definable sets
\[
\{ (\mathcal{C}_{j})_{\overline{a}} \; \mid \; \overline{a} \in A'^{N(n)} \}.
\] 
\end{proposition}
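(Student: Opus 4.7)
The plan is to prove the proposition by induction on $n$, establishing the recurrence $N(n) = 2N(n-1) + 2$ with $N(1) = 2$, which then solves to $N(n) = 2(2^n - 1)$ by a direct verification ($2 \cdot 2(2^{n-1}-1) + 2 = 2^{n+1} - 2$). Throughout, I would rely on two fundamental facts about an o-minimal expansion of $\R$: the cell decomposition theorem in families (which gives uniform bounds on the number of fiberwise components and continuous definable sectioning functions across a definable family), and the fact that a definable family of subsets of $\R$ has a uniformly bounded number of endpoints.

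For the base case $n=1$, the uniform finiteness theorem applied to $\mathcal{A}$ yields finitely many definable partial endpoint functions $e_k:A\rightharpoonup \R$ such that each $\mathcal{A}_a$ is a Boolean combination of its endpoints $\{e_k(a)\}$. Given $A'\subset A$ finite, cells of $\mathcal{D}_1$ are points or open intervals with endpoints drawn from $\{e_k(a):a\in A',k\text{ finite}\}$, and each cell is determined by at most two such endpoints; this yields the required families $(\mathcal{C}_j)_{j\in J}$ parametrized by $A^{N(1)}=A^{2}$, indexed by the finite case-split on endpoint-types and open/closed/unbounded configurations. For the inductive step, I would apply the cell decomposition theorem to the definable set $\{(a_1,a_2,x)\in A^2\times\R^n : x\in \mathcal{A}_{a_1}\cup\mathcal{A}_{a_2}\}$; this produces, for every pair $(a_1,a_2)\in A^2$, continuous definable sectioning functions $\xi_{a_1,a_2,\ell}$ with domains $U_{a_1,a_2,\ell}\subset \R^{n-1}$, whose number is bounded uniformly in $(a_1,a_2)$ and whose graphs organize into finitely many definable families parametrized by $A^2$.

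I would then form a derived family $\mathcal{A}'$ of subsets of $\R^{n-1}$ consisting of the projections $\pi(\mathcal{A}_a)$ (parametrized by $A$) together with the domains $U_{a_1,a_2,\ell}$ (parametrized by $A^2$), and apply the inductive hypothesis to $\mathcal{A}'$ (viewed as parametrized by $A^2$ after folding $A\hookrightarrow A^2$). This furnishes a cylindrical decomposition $(\mathcal{D}_1,\ldots,\mathcal{D}_{n-1})$ of $\R^{n-1}$ in which every cell has the form $(\mathcal{C}'_{j'})_{\bar a'}$ for some $\bar a'\in ((A')^2)^{N(n-1)}$; unfolding each coordinate of $\bar a'$ as a pair gives a tuple of length $2N(n-1)$ in $A'$. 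To build $\mathcal{D}_n$, over each cell $C\in\mathcal{D}_{n-1}$ the sectioning functions $\xi_{a_1,a_2,\ell}$ associated to pairs used in the description of $C$ are pairwise ordered and continuous; the cells of $\mathcal{D}_n$ above $C$ are graphs of single sectioning functions and bands between consecutive ones. A band cell is specified by $C$ together with at most one additional pair $(a_1,a_2)\in(A')^2$ to name the two bounding sections (taking the same pair if both come from the same slicing family, or inserting one extra pair in the worst case), contributing at most $2$ further elements of $A'$ beyond the $2N(n-1)$ used for $C$, for a total of $2N(n-1)+2=N(n)$. The families $(\mathcal{C}_j)_{j\in J}$ for $\R^n$ are then indexed by the finite combinatorial type data (the base cell type $j'$, which sectioning families bound the cell, and the choice of graph-versus-band).

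The main obstacle is the uniformity statement in the inductive step: organizing the sectioning functions and their domains, for pairs $(a_1,a_2)\in A^2$, into finitely many definable families parametrized by $A^2$. This is a relative version of the cell decomposition theorem, applied with the pair $(a_1,a_2)$ treated as a parameter; it is standard in o-minimality but must be stated carefully so that the "type" of each cell of $\mathcal{D}_n$ captures exactly which coordinates of the parameter tuple name single-set data versus pair-interaction data. The delicate accounting is in showing that the band cells above a base cell $C$ are genuinely controlled by $2N(n-1)+2$ elements of $A'$ (and not $2N(n-1)+4$), using the fact that, once the base cell $C$ is fixed, the ordered list of sectioning functions over $C$ is already determined by the pairs embedded in $C$'s defining tuple, so at most one extra pair need be introduced to name the two bounding sections.
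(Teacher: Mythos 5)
The paper does not actually prove Proposition~\ref{prop:qcad}; it is quoted from \cite[Theorem 2.5]{Basu9}, and your induction with the recurrence $N(n)=2N(n-1)+2$, $N(1)=2$ (hence $N(n)=2(2^n-1)$) is essentially the argument given there: singletons of $A'$ name the fiberwise boundary data of each $\mathcal{A}_a$, pairs are needed only to control crossings of boundary functions coming from two different sets, and the base decomposition of $\R^{n-1}$ is obtained by applying the inductive hypothesis to a derived family parametrized by $A^2$. Note that for the induction to close you must carry the adaptedness clause (the decomposition is compatible with each $\mathcal{A}_a$, $a\in A'$) as part of the inductive statement; as literally written the proposition is vacuous without it, and you use it implicitly when you invoke the IH on the derived family.

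Two points in your sketch need tightening. First, applying cell decomposition in families to $\{(a_1,a_2,x): x\in\mathcal{A}_{a_1}\cup\mathcal{A}_{a_2}\}$ is not enough: a decomposition adapted to a union need not be adapted to either set (already for $[0,1]\cup[0,2]\subset\R$), so you must decompose $A^2\times\R^n$ adapted to the two marked sets $\{x\in\mathcal{A}_{a_1}\}$ and $\{x\in\mathcal{A}_{a_2}\}$ separately. Second, your accounting of the band cells is fragile as stated: if the two bounding sections are named by \emph{pairs} $(a_1,a_2)$ and $(a_3,a_4)$ from different slicing families, you would need $2N(n-1)+4$ parameters, not $2N(n-1)+2$. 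The fix is to observe that every sectioning function appearing over a base cell of the final decomposition is a fiberwise boundary function of a single $\mathcal{A}_a$ (the pairwise data is consumed entirely in building the base decomposition so that these functions do not cross over each cell); hence each bounding section is named by one element of $A'$ plus a uniformly bounded index, and the band cell costs exactly two extra singletons. With these repairs your argument is the one underlying the cited result.
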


For the rest of this section we fix a polynomially bounded o-minimal expansion of $\mathbb{R}$.

\begin{proposition}[o-minimal quantitative version of Proposition 2.6.4 in \cite{BCR98}]
\label{prop:2.6.4:o}
Let $\mathcal{A}$ be a definable family of subsets of $\mathbb{R}^n$ parametrized by the definable set $A$,
and let $\mathcal{B}$ be a definable subset of $\mathbb{R}^{n+1}$ parametrized by the definable set $B$.

Then there exists $N = N(\mathcal{A},\mathcal{B}) > 0$ having the following property.
For any triple of finite sets
$(A',B',B'')$ with $A' \subset A, B',B'' \subset B$, 
a closed $(\mathcal{A},A')$-set $S$, 
a $(\mathcal{B},B')$-set $F$, $(\mathcal{B},B'')$-set $G$, such that $F,G$ are graphs of
definable functions $f:\mathbb{R}^n \rightarrow \mathbb{R}, g: \mathbb{R}^n \rightarrow \mathbb{R}$,
such that $f|_S$ and $g|_{S_{f\neq 0}}$
(where $S_{f \neq 0} = \{x \in S \;\mid\; f(x) \neq 0\}$)
are continuous,
the function $f^N g|_{S_{f\neq 0}}$ extended by $0$ on $\{ x \in S \; \mid \;  f(x)=0\}$
is continuous on $S$. 
\end{proposition}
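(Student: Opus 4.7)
The plan is to parallel the proof of Proposition~\ref{prop:2.6.4}, replacing the algebraic tools specific to the semi-algebraic setting by their o-minimal counterparts. For each $x \in S$ and $u \in \mathbb{R}$, set
\[
A_{x,u} = \{y \in S \;\mid\; \|y-x\| \leq 1, \; u \cdot |f(y)| = 1\},
\]
which is definable, closed, and bounded, and define $v(x,u) = \sup\{|g(y)| \;\mid\; y \in A_{x,u}\}$, with $v(x,u) = 0$ if $A_{x,u} = \emptyset$. Since $g$ is continuous on $A_{x,u} \subset S_{f\neq 0}$ and the set is definably compact, the supremum is attained. As in the semi-algebraic case, it suffices to produce an integer $N$, depending only on $\mathcal{A}$ and $\mathcal{B}$, such that for every $x \in S$ one has $v(x,u) \leq c(x)\cdot u^{N-1}$ for $u$ sufficiently large; this gives $|f(y)|^{N-1}|g(y)| \leq c(x)$ on a neighborhood of $x$ in $S_{f\neq 0}$, and hence continuity of the extension of $f^N g$ at $x$.

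The next step is to encode the graph of $v$ as a definable subset of $\mathbb{R}^{n+2}$ built, via a fixed first-order scheme, from a bounded number of sets drawn from $\mathcal{A}$ (with parameters in $A'$) and from $\mathcal{B}$ (with parameters in $B' \cup B''$). Norm balls, sign conditions, intersections and finite unions, and the one-dimensional supremum over the compact fibers $A_{x,u}$ can all be absorbed into a single definable family $\widetilde{\mathcal{A}}$ of subsets of $\mathbb{R}^{n+2}$, parametrized by some definable set $\widetilde{A}$ that is constructed from $A$ and $B$ and depends only on $\mathcal{A}$ and $\mathcal{B}$. The graph of $v$ is then a $(\widetilde{\mathcal{A}}, \widetilde{A}')$-set where $\widetilde{A}'$ is determined by the given data $A', B', B''$. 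This is the step most in need of care and constitutes the main obstacle: one must verify that this uniform ``template'' for the construction of $v$ exists independently of the cardinalities $|A'|, |B'|, |B''|$, in precise analogy with the degree bound in Proposition~\ref{prop:BElim} being independent of $|\mathcal{P}|$.

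Applying Proposition~\ref{prop:qcad} to $\widetilde{\mathcal{A}}$ with ambient dimension $n+2$ yields a finite index set $J$ and definable families $(\mathcal{C}_j)_{j\in J}$, each parametrized by $\widetilde{A}^{N(n+2)}$ with $N(n+2) = 2(2^{n+2}-1)$, such that any cylindrical definable decomposition of $\mathbb{R}^{n+2}$ adapted to the graph of $v$ has cells drawn from $\{(\mathcal{C}_j)_{\bar a} : j\in J,\ \bar a \in \widetilde{A}^{N(n+2)}\}$. Fixing $x$ and projecting onto the last two coordinates $(u,w)$, the graph of $u \mapsto v(x,u)$ becomes a union of intersections of such cells with the fiber, hence belongs to a fixed definable family $\mathcal{G}$ of subsets of $\mathbb{R}^2$; the family $\mathcal{G}$ itself depends only on $\mathcal{A}$ and $\mathcal{B}$, while the specific element realized depends on $x, A', B', B''$.

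Finally, invoke uniform polynomial boundedness. In any polynomially bounded o-minimal structure, Miller's theorem \cite[Theorem~5.4]{Miller} combined with the standard fact that a definable function from a definable set into $\mathbb{N}$ has finite image (by o-minimal cell decomposition) gives that for any definable family of plane curves there is a single exponent $p$ such that every curve in the family, where it is the graph of a function of $u$, satisfies $|w| \leq c\cdot u^p$ for $u$ large, with $c$ possibly depending on the curve but $p$ uniform. Applying this to $\mathcal{G}$ produces $p = p(\mathcal{A},\mathcal{B})$, and taking $N = p+1$ yields the desired conclusion.
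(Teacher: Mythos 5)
Your skeleton matches the paper's proof of Proposition~\ref{prop:2.6.4:o}: the same auxiliary sets $A_{x,u}$, the same function $v(x,u)$, an appeal to Proposition~\ref{prop:qcad}, and Miller's uniform polynomial bound for definable families to extract a single exponent $p$ with $N=p+1$. However, the step you flag as ``most in need of care'' is exactly where the real work lies, and as you state it the claim is not yet justified: the graph of $v$ is defined via a supremum, i.e., via universal quantification over auxiliary variables, so it is \emph{not} directly a Boolean combination of sets drawn from a definable family of subsets of $\mathbb{R}^{n+2}$ with parameters determined by $A',B',B''$ --- it is the complement of a projection of such a Boolean combination. Converting a projection back into a Boolean combination of cells, each depending on a \emph{bounded} number of parameters (independent of $\card(A'),\card(B'),\card(B'')$), is precisely the content of Proposition~\ref{prop:qcad}; you cannot take it as an input to that proposition.

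The paper resolves this by writing out the quantified definition explicitly: it introduces sets $T$, $L_0$, $L$ and observes that $L$ (the graph of $v$) is the complement of a projection of a set $P\subset\mathbb{R}^{2n+5}$ which genuinely \emph{is} a $(\mathcal{D},D')$-set, for a definable family $\mathcal{D}$ parametrized by $A\times A\times B\times B$ (depending only on $\mathcal{A},\mathcal{B}$) and $D'=A'\times A'\times B'\times B''$. Proposition~\ref{prop:qcad} is then applied to $\mathcal{D}$ in dimension $2n+5$, with the coordinates ordered so that $(x,u,w)$ come first; by cylindricity the induced decomposition of $\mathbb{R}^{n+2}$ is adapted to $L$, and its cells lie in definable families with $N(2n+5)$ parameters from $D'$. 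From there your last two paragraphs go through essentially verbatim (the paper invokes Miller's Proposition~5.2 directly rather than reassembling the uniform exponent from Theorem~5.4, which is immaterial). So: right architecture, but the encoding step requires the detour through $\mathbb{R}^{2n+5}$, after which your second application of Proposition~\ref{prop:qcad} in dimension $n+2$ is redundant.
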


\begin{proof}
We follow closely the proof of the corresponding proposition (Proposition~\ref{prop:2.6.4}) in the semi-algebraic case.

For each $x \in S, u \in \mathbb{R}$, we define
\[
S_{x,u} = \{y \in S \;\mid\; ||y - x|| \leq 1, u |f(y)| = 1 \}.
\]

The set $S_{x,u}$ is definable, closed and bounded, and we define the function
\[
v(x,u) = \begin{cases}
0 \mbox{ if } A_{x,u} = \emptyset, \\
\sup\{|g(y)| \;\mid\; y \in A_{x,u}\} \mbox{ otherwise}.
\end{cases}
\]

Clearly, $v: S \times \mathbb{R} \rightarrow \mathbb{R}$ is a definable function.

We define 
$$
\displaylines{
T = \{(x,u,y,v) \mid y \in S \wedge  
(||y - x||^2 - 1 \leq 0) \cr
\;\wedge\; \cr
((v > 0 \; \wedge \; (u v  - 1 = 0)) \;\vee \; ((v < 0) \; \wedge \;  (u  v +1 =0)))
\cr \;\wedge \; \cr
(y,v) \in F \}.
}
$$

Notice that $T$ is definable and for each fixed $x$, $T_x \subset \R^{n+2}$ is a $(\mathcal{C}, A' \times B')$-set where
$\mathcal{C}$ is a definable family of subsets of $\R^{n+2}$ parametrized by 
$(A \times B)$, and $\mathcal{C}$ depends only on the definable families $\mathcal{A},\mathcal{B}$.

Observe  also that for each $x\in A$ and $u \in \R$, 
\[
T_{x,u}  = \mathrm{graph}(f|_{S_{x,u}}).
\]

We also define the set $L_0$ by
$$
\displaylines{
L_0 = \{(x,u,w) \; \mid \;
\forall(y,v,z) \cr
(w \geq 0)
\cr
\wedge
\cr
((x,u,y,v) \in T \wedge
(y,z) \in G)
\Longrightarrow \cr
((z \geq 0) \wedge (w\geq z)) \vee ((z \leq 0) \wedge (w \geq -z)))
\}
.
}
$$
Finally, let
\[
L = \{(x,u,w) \; \mid \; (\forall w') 
(x,u,w') \in L_0 \Longrightarrow (0 \leq w \leq w') \}.
\]
Notice that for $x \in S$,  $(x,u,w) \in L$  if and only
if $w = v(x,u)$.

Also notice that
the set $L \subset S \times \mathbb{R}^2$ 
is a definable set which is the complement of a projection of
an $(\mathcal{D},D')$-set $P \subset \mathbb{R}^{2n+5}$,
where $\mathcal{D}$ is a definable family of sets parametrized by $A \times A \times B \times B$
and $D' = A' \times A' \times B' \times B''$, and $\mathcal{D}$ depends only on the definable
families $\mathcal{A},\mathcal{B}$.

We now apply Proposition~\ref{prop:qcad} to the definable family $\mathcal{D}$. There 
exist a set of definable families $(\mathcal{C}_j)_{j \in J}$ and 
a cylindrical decomposition $(\mathcal{D}_1,\ldots,\mathcal{D}_{2n+5})$ of $\mathbb{R}^{2n+5}$ adapted to the set $P$
whose cells are of the form
$(\mathcal{C}_{j})_w$ with $j \in J, w \in (D')^{N(2n+5)}$.

For $x \in S$, there exists a unique cell,
$C = (\mathcal{C}_{j})_{w}$, $j \in J, w \in (D')^{N(2n+5)}$, 
of the decomposition $\mathbb{R}^n$ containing $x$.

The definable functions $v(x,\cdot)$ as 
$x$ varies over $(\mathcal{C}_{j})_{w}$ and $w$ varies over $(A \times A \times B \times B)^{N(2n+5)}$ and $j \in J$
form a  definable family, and using Proposition 5.2 in \cite{Miller} 
there exists $p = p(\mathcal{A},\mathcal{B})$
such that 
\[
|v(x,u)| \leq c(x) \cdot u^p, 
\]
for all $u$ large enough.

Now for each  $x \in S$, there exists a cell,
$C = (\mathcal{C}_{j})_{w}$ $j \in J, w \in (D')^{N(2n+5)}$, 
of the decomposition $\mathbb{R}^n$ containing $x$.

This proves the proposition taking $N = p+1$.
\end{proof}

\begin{theorem}
\label{thm:2.6.6-o}
Let $\mathcal{A}$ be a definable family of subsets of $\R^n$ parametrized by the definable set $A$,
and let $\mathcal{B}$ be a definable subset of $\R^{n+1}$ parametrized by the definable set $B$.
Then there exists $N = N(\mathcal{A},\mathcal{B}) > 0$ having the following property.
For any triple of finite sets
$(A',B',B'')$ with $A' \subset A, B',B'' \subset B$, 
a closed $(\mathcal{A},A')$-set $S$, 
a $(\mathcal{B},B')$-set $F$, $(\mathcal{B},B'')$-set $G$, such that $F,G$ are graphs of
definable functions $f:\mathbb{R}^n \rightarrow \mathbb{R}, g: \mathbb{R}^n \rightarrow \mathbb{R}$ continuous on $S$, such that
$f|_S^{-1}(0) \subset g|_S^{-1}(0)$.
Then there exists a continuous definable function
$h:S \rightarrow \R$ such that $g|_S^N = h f|_S$ on $S$.
\end{theorem}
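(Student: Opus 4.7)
The plan is to mirror the proof of the semi-algebraic Theorem~\ref{thm:2.6.6} in the o-minimal setting, substituting Proposition~\ref{prop:2.6.4:o} for Proposition~\ref{prop:2.6.4}. The crux is that every auxiliary construction preserves membership in definable families built from $\mathcal{A},\mathcal{B}$ alone, so the bound $N$ that emerges depends only on $\mathcal{A},\mathcal{B}$ and not on $A',B',B''$.

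First I would form the graph
\[
\widetilde{S} := \{(x,u,v) \in \R^{n+2} \mid x \in S,\ (x,u) \in F,\ (x,v) \in G\},
\]
which coincides with the graph of $(f,g)|_S$ and is closed because $S$ is closed and $f|_S,g|_S$ are continuous. Since $S$ is a Boolean combination of the fibers $\mathcal{A}_a$ for $a \in A'$, and $F,G$ are Boolean combinations of fibers $\mathcal{B}_b$ for $b \in B' \cup B''$, one constructs a definable family $\widetilde{\mathcal{A}}$ of subsets of $\R^{n+2}$, parametrized by $A \sqcup B$ and built from $\mathcal{A},\mathcal{B}$ by trivial definable operations (product with $\R^2$, intersection with fibers of $\mathcal{B}$ pulled back via the coordinate projections $(x,u,v) \mapsto (x,u)$ and $(x,u,v) \mapsto (x,v)$), such that $\widetilde{S}$ is a $(\widetilde{\mathcal{A}},\,A' \sqcup B' \sqcup B'')$-set.

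Next I would introduce two functions on $\widetilde{S}$ to play the roles of ``$f$'' and ``$g$'' in Proposition~\ref{prop:2.6.4:o}: the coordinate map $\pi_v:(x,u,v) \mapsto v$, which is globally continuous on $\widetilde{S}$, and the reciprocal $\sigma:(x,u,v) \mapsto 1/u$, which is continuous on $\widetilde{S} \cap \{v \neq 0\}$. The hypothesis $f|_S^{-1}(0) \subset g|_S^{-1}(0)$ forces $v \neq 0 \Rightarrow u \neq 0$ on $\widetilde{S}$, so $\sigma$ is well-defined and continuous there. The graphs of $\pi_v$ and $\sigma$ in $\R^{n+3}$ both belong to a common definable family $\widetilde{\mathcal{B}}$ depending only on $\mathcal{B}$, realized over the finite parameter set $B' \cup B''$. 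Applying Proposition~\ref{prop:2.6.4:o} to the triple $(\widetilde{S},\pi_v,\sigma)$ then yields an exponent $N = N(\widetilde{\mathcal{A}},\widetilde{\mathcal{B}})$, hence depending only on $\mathcal{A},\mathcal{B}$, such that the function $\widetilde{h}:\widetilde{S}\to\R$ equal to $v^N/u$ where $u \neq 0$ and $0$ where $u = 0$ is continuous and definable on $\widetilde{S}$.

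Finally, I would define $h:S \to \R$ by $h(x) := \widetilde{h}(x,f(x),g(x))$; continuity of the embedding $x \mapsto (x,f(x),g(x))$ from $S$ into $\widetilde{S}$ transports continuity of $\widetilde{h}$ to $h$. On $\{f \neq 0\} \cap S$ one has $h(x)f(x) = g(x)^N$ directly, while on $\{f = 0\} \cap S$ the hypothesis forces $g(x) = 0$ so both sides vanish, establishing $g|_S^N = h \cdot f|_S$. The main obstacle will be Steps 1 and 2, namely the bookkeeping required to check that $\widetilde{\mathcal{A}}$ and $\widetilde{\mathcal{B}}$ are genuine definable families depending only on $\mathcal{A},\mathcal{B}$, and that $\widetilde{S}$ and the graphs of $\pi_v,\sigma$ are realized as $(\cdot,\cdot)$-sets over the correct finite parameter sets, since only then does Proposition~\ref{prop:2.6.4:o} legitimately deliver a uniform $N$; beyond this, the argument is a direct o-minimal transcription of the semi-algebraic proof.
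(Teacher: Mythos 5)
Your proposal is correct and follows essentially the same route as the paper: the paper's proof of Theorem~\ref{thm:2.6.6-o} is just "replace semi-algebraic by definable in the proof of Theorem~\ref{thm:2.6.6}," which is exactly your construction of $\widetilde{S}=\{(x,f(x),g(x))\mid x\in S\}$, the application of Proposition~\ref{prop:2.6.4:o} to the pair $(v, 1/u)$ on $\widetilde{S}$, and the projection back to $h$. The bookkeeping you flag in Steps 1 and 2 (that $\widetilde{S}$ and the graphs of $\pi_v,\sigma$ lie in definable families built canonically from $\mathcal{A},\mathcal{B}$, so that the exponent $N$ from Proposition~\ref{prop:2.6.4:o} depends only on $\mathcal{A},\mathcal{B}$) is precisely what the paper leaves implicit, and your outline of it is sound.
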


\begin{proof}
Similar to the proof of Theorem~\ref{thm:2.6.6} replacing semi-algebraic by definable.
\end{proof}

\begin{proof}[Proof of Theorem~\ref{thm:uniform}]
Use Theorem~\ref{thm:2.6.6-o} with $c = \sup_{x \in S} |h(x)|$.
\end{proof}

\section{Applications to optimization}
\label{sec:applications}
As an illustration of the improvement one obtains by applying the improved bound on the 
{\L}ojasiewicz exponent proved in Theorem~\ref{thm:Lojasiewicz} and the error bound  in Theorem~\ref{thm:error_bound_semialg_set}
we consider the following application in the theory of optimization. Clearly, Theorem~\ref{thm:error_bound_semialg_set} can be applied to other situations as well where error bounds are important, 
for example in the study of  H\"olderian continuity of the set-valued map defined  by~\eqref{eq:basic_semi_error_bound} as stated in~\cite[Theorem~3.1]{LP94}. 

\subsection{Binary feasibility problems}
We can use Theorem~\ref{thm:error_bound_semialg_set} and its independence from the number of constraints to derive an error bound for a binary feasibility problem, where the feasible set is defined by
\begin{align}\label{binary_set}
M=\{x \in \R^n \mid g_i(x) \le 0, \ h_j(x) &= 0, \ i=1,\ldots,r, \ j=1,\ldots,s, \\ 
x_k &\in \{0,1\}, \ k=1,\ldots,n\}, \nonumber
\end{align}
where $g_i,h_j \in \R[X_1,\ldots, X_n]_{\le d}$. The following result is a quantified version of~\cite[Theorem~5.6]{LP94} specialized for polynomials.
\begin{corollary}
Let $M$ be defined in~\eqref{binary_set} and let $E$ be a closed and bounded $\mathcal{P}$-semi-algebraic subset of $\R^{n}$ 
with $\mathcal{P} \subset \R[X_1,\ldots,X_{n}]_{\leq d}$. Then there exist $\kappa >0$ and 
$\rho = (O(d))^{2n+14}$
such that 
\begin{align*}
    \distance(x,M)^{\rho} \le \kappa \cdot \psi (x), \quad \mbox{for all }  x \in E,
\end{align*}
where
\begin{align*}
    \psi(x) = \sqrt{\sum_{j=1}^{s} (h_j(x))^2} + \sqrt{\sum_{i=1}^{r} (\max\{g_i(x),0\})^2} +\sum_{k=1}^n \abs{x_k(1-x_k)}.
\end{align*}
\end{corollary}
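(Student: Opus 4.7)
The plan is to reduce the corollary to the second (finite $M$) part of Theorem~\ref{thm:error_bound_semialg_set} by absorbing the binary constraints into additional polynomial equalities and then handling the difference between the $\ell_1$ and $\ell_2$ type residual functions by a norm equivalence.

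First, I would rewrite $M$ in the form~\eqref{eq:basic_semi_error_bound} by replacing each binary constraint $x_k\in\{0,1\}$ with the degree $2$ polynomial equality $h_{s+k}(x):=x_k(1-x_k)=0$ for $k=1,\ldots,n$. Since $d\ge 2$, the set $M$ is now presented as a basic closed semi-algebraic set defined by polynomials in $\R[X_1,\ldots,X_n]_{\le d}$ (namely the $g_i$'s together with the enlarged list $h_1,\ldots,h_s,h_{s+1},\ldots,h_{s+n}$). The crucial observation is that $M\subseteq\{0,1\}^n$, so $M$ is a finite subset of $\R^n$ and in particular $\dim M=0$. Hence the hypothesis of the second (finite) part of Theorem~\ref{thm:error_bound_semialg_set} is satisfied, and we obtain $\kappa_0>0$ and an integer $\rho\le(8d)^{2(n+7)}=(O(d))^{2n+14}$ such that
\[
\distance(x,M)^{\rho}\le\kappa_0\cdot\widetilde\psi(x),\qquad\forall x\in E,
\]
where $\widetilde\psi$ is the residual function from~\eqref{func:residual_function} applied to the enlarged system, namely
\[
\widetilde\psi(x)=\sum_{j=1}^s|h_j(x)|+\sum_{k=1}^n|x_k(1-x_k)|+\sum_{i=1}^r\max\{g_i(x),0\}.
\]

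Next I would compare $\widetilde\psi$ with the $\ell_2$-flavored residual $\psi$ appearing in the statement. Using the elementary Cauchy--Schwarz inequality $\|v\|_1\le\sqrt{m}\,\|v\|_2$ applied to the vectors $(h_1(x),\ldots,h_s(x))$ and $(\max\{g_1(x),0\},\ldots,\max\{g_r(x),0\})$, we get
\[
\sum_{j=1}^s|h_j(x)|\le\sqrt{s}\sqrt{\sum_{j=1}^s h_j(x)^2},\qquad
\sum_{i=1}^r\max\{g_i(x),0\}\le\sqrt{r}\sqrt{\sum_{i=1}^r(\max\{g_i(x),0\})^2},
\]
so that $\widetilde\psi(x)\le C\cdot\psi(x)$ with $C:=\max\{\sqrt{r},\sqrt{s},1\}$, independently of $x$. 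Combining this with the inequality above and setting $\kappa:=\kappa_0 C$ yields the claimed error bound
\[
\distance(x,M)^{\rho}\le\kappa\cdot\psi(x),\qquad\forall x\in E,
\]
with $\rho=(O(d))^{2n+14}$.

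The whole argument is essentially a bookkeeping reduction; there is no real obstacle. The only point to watch is that in Theorem~\ref{thm:error_bound_semialg_set} the exponent is independent of the number of constraints, which is what makes adding the $n$ extra equalities $x_k(1-x_k)=0$ harmless and is exactly the feature emphasized in Remark~\ref{rem:arbitrary_residual_functions} and in the discussion preceding the corollary. Absent this independence, the $n$ new equations would inflate the exponent (as in the prior bounds~\eqref{eqn:best_error_bound} and~\eqref{eqn:best_error_bound_compact_case}), so invoking the second part of Theorem~\ref{thm:error_bound_semialg_set} rather than the first is what yields the single-exponential bound $(O(d))^{2n+14}$ instead of a $d^{O(n^2)}$ one.
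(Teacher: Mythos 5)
Your proposal is correct and follows essentially the same route as the paper: rewrite the binary constraints as the polynomial equalities $x_k(1-x_k)=0$ so that $M$ becomes a finite (zero-dimensional) basic semi-algebraic set, apply the second part of Theorem~\ref{thm:error_bound_semialg_set} with its combinatorial-parameter-free exponent, and pass from the $\ell_1$-type residual to the stated $\psi$ via Cauchy--Schwarz with constant $\max\{1,\sqrt{r},\sqrt{s}\}$. No gaps.
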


\begin{proof}
Note that for every $k=1,\ldots,n$, the binary constraint on $x_{k}$ can be enforced by
\begin{align*}
x_{k}(1-x_{k}) = 0, \quad  x_{k} \in [0,1]\subset\R.    
\end{align*}
Then $M$ can be redefined as  
\begin{align}\label{redefined_discrete_set}
M  :=\Big\{x \in \R^n \mid g_i(x) \le 0, \ h_j(x) &= 0, &  i&=1,\ldots,r, \ j=1,\ldots,s,\\ \nonumber
x_{k}(1-x_{k}) &= 0, & k&=1,\ldots,n,\Big\},
\end{align}
which is a finite subset of $\R^n$. 

Also observe that defining 
\begin{align*}
     \widetilde{\psi}(x)  = \sum_{j=1}^{s} |h_j(x)| + \sum_{i=1}^{r} \max\{g_i(x),0\} +\sum_{k=1}^n \abs{x_k(1-x_k)},   
\end{align*}
it follows from the Cauchy-Schwarz inequality 
that 
\[
\widetilde{\psi}(x) \leq c \cdot \psi(x),
 \]
with $c = \max\{1,\sqrt{r},\sqrt{s}\}> 0$.

Now the result follows by applying 
the second part of 
Theorem~\ref{thm:error_bound_semialg_set} and Remark~\ref{rem:arbitrary_residual_functions} to~\eqref{redefined_discrete_set} and the residual function
$\widetilde{\psi}(x)$.
\end{proof}

\subsection{Convergence rate of feasible descent schemes}
 Error bounds are important to estimate the convergence rate of iterative algorithms in nonlinear optimization. Here, we present the convergence analysis in~\cite[Theorem~5]{L2000}, which is relevant to Theorem~\ref{thm:Lojasiewicz}. 
 
 Let $\R=\mathbb{R}$ and $g_i,h_j$ defined in~\eqref{eq:basic_semi_error_bound} be convex polynomials. Then the goal of a feasible descent scheme is to find \textit{stationary solutions} of a polynomial $f \in \mathbb{R}[X_1,\ldots,X_n]_{\le d}$ over a convex $M$ (assuming that $\inf_{x \in M} f > -\infty$), where a stationary solution is an $x \in \mathbb{R}^n$ such that
 \begin{align*}
     x - \proj_M\!\big(x-\nabla f(x)\big) = 0,
 \end{align*}
 in which $\proj_M(\cdot)$ denotes the projection onto the convex set $M$. The idea of a feasible descent scheme is to generate a sequence $\{x_k\}_{k=1}^{\infty}$ of solutions by
 \begin{align}\label{feasible_descent_scheme}
     x_{k+1} = \proj_M\!\big(x_{k} - \alpha_k \nabla f(x_k) + e_k\big),
 \end{align}
 where $\alpha_k > 0$ is so-called the step length and $e_k$ is an error vector depending on $x_k$. If we define the set of stationary solutions as
  \begin{align*}
     S:=\Big\{x \in \mathbb{R}^n \mid x - \proj_M\!\big(x-\nabla f(x)\big) = 0\Big\},
 \end{align*}
 then the following result is well-known for the convergence rate of a feasible descent scheme which we specialize for the polynomial $f$.
 \begin{proposition}[Theorem~5 in~\cite{L2000}]\label{feasible_descent_convergence}
 Suppose that the gradient of $f$ is Lipschitz continuous and there exists $\varepsilon > 0$ such that
 \begin{align*}
     x \in S, \ \ y \in S, \ \ f(x) \neq f(y) \Longrightarrow \|x-y\| \ge \varepsilon.
 \end{align*}
 If $\liminf \alpha_k > 0$ and the sequences $\{e_k\}_{k=1}^{\infty}$ and $\{x_k\}_{k=1}^{\infty}$ generated by~\eqref{feasible_descent_scheme} satisfy
 \begin{align*}
     \|e_k\| \le \kappa_1 \|x_k - x_{k+1}\|, \ \text{for some} \ \kappa_1 > 0,\\
     f(x_{k+1}) - f(x_k) \le -\kappa_2 \|x_k - x_{k+1}\|^2, \ \text{for some} \ \kappa_2 > 0,
 \end{align*}
 then the sequence $\{f(x_k)\}_{k=1}^{\infty}$ converges at least sub-linearly, at the rate $k^{1-\rho}$, where $\rho \ge 1$ is an integer satisfying
 \begin{align*}
     \distance(x,S)^{\rho} \le \kappa \cdot \|x - \proj_M\!\big(x-\nabla f(x)\big)\|
 \end{align*}
 for some $\kappa > 0$ and for all $x$ in a compact semi-algebraic subset of $M$.
 \end{proposition}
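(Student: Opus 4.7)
The plan is to combine the hypothesized descent inequality on $\{x_k\}$ with the H\"olderian error bound on the stationary set $S$ to derive a one-step recursion on the function-value gap, and then solve this recursion asymptotically. First I would establish that $\{f(x_k)\}$ is monotonically non-increasing (from $f(x_{k+1}) - f(x_k) \le -\kappa_2\|x_k - x_{k+1}\|^2$) and bounded below (since $\inf_M f > -\infty$), hence convergent to some limit $f^*$. Telescoping the descent inequality yields $\sum_k \|x_k - x_{k+1}\|^2 < \infty$, so in particular $\|x_k - x_{k+1}\| \to 0$. The isolation hypothesis, that distinct stationary values are separated by at least $\varepsilon$, then forces every accumulation point of $\{x_k\}$ to lie in a single connected subset of $S$ on which $f \equiv f^*$, so after discarding finitely many terms we may assume the iterates stay near that component.

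Next I would bound the residual at $x_k$ by the step length. Using non-expansiveness of $\proj_M$, Lipschitz continuity of $\nabla f$, the step-length bound $\liminf \alpha_k > 0$, and the hypothesis $\|e_k\| \le \kappa_1 \|x_k - x_{k+1}\|$, a short manipulation of the update rule~\eqref{feasible_descent_scheme} yields a constant $C > 0$ with
\[
\|x_k - \proj_M(x_k - \nabla f(x_k))\| \le C\,\|x_k - x_{k+1}\|.
\]
Feeding this into the assumed error bound gives $\distance(x_k, S) \le (C\kappa)^{1/\rho}\,\|x_k - x_{k+1}\|^{1/\rho}$. Combining with a quadratic majorization of $f$ on a neighborhood of the relevant component of $S$ (available because $\nabla f$ is Lipschitz and $f \equiv f^*$ on that component, so $f$ behaves like a squared distance to the nearest projection), one extracts $f(x_k) - f^* \le C'\,\|x_k - x_{k+1}\|^{2/\rho}$ for all sufficiently large $k$.

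Setting $a_k = f(x_k) - f^* \ge 0$ and substituting this estimate back into the descent inequality produces a recursion of the form
\[
a_k - a_{k+1} \;\ge\; c\,a_k^{\rho}
\]
for some $c>0$ and all large $k$. The main obstacle, and the technical heart of the theorem, is to solve this one-step recursion for integer $\rho \ge 1$ and extract the claimed sub-linear rate $a_k = O(k^{1-\rho})$. I would do this by comparison with the continuous ODE $\dot a = -c\,a^{\rho}$, whose solution decays at the stated polynomial rate; the monotonicity of $\{a_k\}$ combined with a telescoping / summation argument transfers the continuous decay to the discrete sequence. The delicate point is uniformity: one must ensure the constants $C,C',c$ can be chosen on a common tail of the sequence, which is where the isolation hypothesis on stationary values and the eventual confinement of $\{x_k\}$ to a neighborhood of a single component of $S$ are essential.
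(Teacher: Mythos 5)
The paper does not prove this proposition: it is imported verbatim as Theorem~5 of~\cite{L2000}, so there is no internal proof to compare against. Your outline does follow the standard Luo--Tseng template that underlies the cited result (sufficient decrease $+$ residual bound $+$ cost-to-go estimate $+$ error bound $\Rightarrow$ recursion on $a_k = f(x_k)-f^*$), and the first two stages (monotonicity, square-summability of the steps, and $\|x_k - \proj_M(x_k-\nabla f(x_k))\| \le C\|x_k - x_{k+1}\|$ via nonexpansiveness and $\liminf \alpha_k>0$) are sound. But there are two genuine gaps.

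First, the cost-to-go estimate $f(x_k)-f^* \le C'\|x_k-x_{k+1}\|^{2/\rho}$ is justified by the claim that, near the relevant component of $S$, ``$f$ behaves like a squared distance.'' That is false in the constrained setting: for a stationary point of $\min_M f$ one only has the variational inequality $\langle \nabla f(\bar x), y-\bar x\rangle \ge 0$ for $y\in M$, and $f$ generically grows \emph{linearly} away from $\bar x$ (e.g.\ $f(x)=x$ on $M=[0,1]$). The correct derivation expands $f(x_{k+1})-f(\bar x_k)$ with $\bar x_k = \proj_S(x_k)$, and kills the first-order term $\langle \nabla f(\bar x_k), x_{k+1}-\bar x_k\rangle$ by combining the variational inequality at $\bar x_k$ with the projection characterization of the update $x_{k+1}=\proj_M(x_k-\alpha_k\nabla f(x_k)+e_k)$ tested against $y=\bar x_k$; this produces $f(x_{k+1})-f^* \le C\bigl(\|x_k-x_{k+1}\|^2+\distance(x_k,S)^2\bigr)$, after which the error bound gives the desired $\|x_k-x_{k+1}\|^{2/\rho}$ estimate. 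The conclusion you want is true, but the mechanism you give for it is not, and this step is the technical heart of the theorem.

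Second, the endgame does not deliver the stated rate. The recursion $a_k - a_{k+1} \ge c\,a_{k+1}^{\rho}$ (or $c\,a_k^\rho$) compares with the ODE $\dot a = -c\,a^{\rho}$, whose solution is $a(t) = \bigl((\rho-1)ct + a_0^{1-\rho}\bigr)^{-1/(\rho-1)} = \Theta\bigl(t^{-1/(\rho-1)}\bigr)$, \emph{not} $\Theta(t^{1-\rho})$. The two exponents $1/(1-\rho)$ and $1-\rho$ coincide only for $\rho=2$; for $\rho\ge 3$ your argument yields the much slower rate $k^{-1/(\rho-1)}$ and cannot produce $k^{-(\rho-1)}$. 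So either the proof stops short of the claimed conclusion, or the rate in the statement must be read as $k^{1/(1-\rho)}$ (which is what the Luo-type analysis actually gives, and linear convergence when $\rho=1$). Asserting that the ODE ``decays at the stated polynomial rate'' papers over exactly this discrepancy; you should either fix the exponent or flag the mismatch explicitly.
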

 Notice that $S$ and $\|x - \proj_M\!\big(x-\nabla f(x)\big)\|$ are both semi-algebraic, and the latter is a residual function. Therefore, Theorem~\ref{thm:Lojasiewicz} and Lemma~\ref{upper_bound_dist_func} can be applied to quantify the convergence rate $k^{1-\rho}$ in terms of $d$ and $n$ only.

 \hide{If we let $M=\mathbb{R}^n$, then the gradient descent method is a simple minimization scheme to find a local minimum of $f$ over $\mathbb{R}^n$. The gradient descent method moves along the steepest descent direction with fixed/variable step length so that the objective function is improved at every iteration. Using the error bound~\eqref{eq:error_bound_generic_form} we can measure the rate of proximity to a neighborhood of a local optimal solution $x^*$.

Let $f$ be also $\beta$-Lipschitz continuous. Then given an initial solution $x_0 \in \mathbb{R}^n$, the $(k+1)^{\mathrm{th}}$ iterate of the gradient descent can be computed by $x_{k+1} = x_{k} - \beta \nabla f(x_{k})$. Then for the sequence $\{x_k\}$ and $f(x_k)$ we get the following performance~\cite{N04}: 
\begin{align*}
    \min_{0 \le k \le N} \|\nabla f(x_k)\| \le \frac{1}{\sqrt{N+1}} \Big(L(f(x_0)-f(x^*))\Big)^{\frac 12}.
\end{align*}
We can use the error bound~\eqref{eq:error_bound_generic_form} to bound the convergence rate of the gradient descent algorithm. Let
\begin{equation*}
    S:=\{x \in \mathbb{R}^n \mid \nabla f(x) = 0\}.
\end{equation*}
Then by Theorem~\ref{thm:error_bound_semialg_set} for every $\varepsilon > 0$ there exist $\rho = d^{O(n^2)}$ and $c > 0$ such that
\begin{equation*}
    \distance\!\big(x_k,S \cap \mathbb{B}_{\varepsilon}(x^*)\big)^{\rho} \le c\|\nabla f(x_k)\|,
\end{equation*}
which in turn yields
\begin{equation*}
    \min_{0 \le k \le N} \distance(x_k,S \cap \mathbb{B}_{\varepsilon}(x^*))^{\rho} \le C \min_{0 \le k \le N}\|\nabla f(x_k)\| \le \frac{1}{\sqrt{N+1}}\Big(L(f(x_0)-f(x^*))\Big)^{\frac 12}.
\end{equation*}
Therefore, we get
\begin{equation*}
    \min_{0 \le k \le N} \distance(x_k,S \cap \mathbb{B}_{\varepsilon}(x^*)) = O(N^{-\frac{1}{2\rho}}),
\end{equation*}
which yields the rate of proximity of $x_k$ to a neighborhood of $x^*$ in terms of the error bound exponent.
}

\hide{
The role of Theorems~\ref{thm:Lojasiewicz} and~\ref{thm:error_bound_semialg_set} is also significant in deriving lower bounds on the convergence rate of the gradient projection and the proximal point algorithms for semi-algebraic versions of~\cite{D81} and~\cite[Theorem~4.2]{L84}.
}

\begin{remark}
The upper bound~\eqref{upper_bound_convex_semi-algebraic} was used to quantify the convergence rate of the cyclic projection algorithm applied to finite intersections of convex semi-algebraic subsets of $\mathbb{R}^n$ ~\cite[Theorem~4.4]{BLY14}.
\end{remark}

\subsection{Sums of squares relaxation}
A polynomial optimization problem is formally defined as the following:
given a polynomial $f \in \mathbb{R}[X_1,\ldots, X_n]_{\le d}$,
compute 
 \begin{align}\label{poly_optim_def}
 f_{\min}^*:=\inf \big \{f(x) \mid x \in M \big\},
 \end{align}
where $M$ is defined in~\eqref{eq:basic_semi_error_bound} with $\R=\mathbb{R}$. We assume, without loss of generality, here that $s = 0$ in~\eqref{eq:basic_semi_error_bound}.

Unlike semi-definite optimization, see e.g.,~\cite{BM22}, there is no efficient interior point method for polynomial optimization. Nevertheless, tools in real algebra have laid the groundwork for developing an efficient numerical approach, where semi-definite relaxation plays a central role. Using this approach,~\eqref{poly_optim_def} is approximated by a hierarchy of semi-definite relaxations, so-called \textit{sums of squares (SOS)} relaxations~\cite{L01,P03}, see also~\cite{L09}. A SOS relaxation of order $t$ is defined as 
\begin{align}\label{SOS_relaxation}
f_{\mathrm{sos}}^{*t}=\sup\{\beta \mid f-\beta \in \mathbf{\mathcal{M}}_{2t}(g_1,\ldots,g_r)\},
\end{align}
where
\begin{align*}
\mathbf{\mathcal{M}}_{2t}(g_1,\ldots,g_r)&=\\
&\bigg\{u_0 - \sum_{j=1}^r u_jg_j \mid u_0,u_j \in \Sigma, \ \deg(u_0), \deg(u_jg_j) \le 2t, \ j=1,\ldots,r\bigg\}
\end{align*}
is called the 
\textit{truncated quadratic module generated by $g_1,\ldots,g_r$} and $\Sigma$ is the convex cone of SOS polynomials. It is worth noting that~\eqref{SOS_relaxation} is a semi-definite optimization problem of size $O(n^t)$. Under some conditions on $M$, see e.g.,~\cite[Proposition~6.2 and Theorem~6.8]{L09},~\eqref{SOS_relaxation} is feasible for sufficiently large $t$, and $f_{\mathrm{sos}}^{*t} \to f_{\min}^*$ as $t \to \infty$.  

\vspace{5px}
\noindent
Recently, Baldi and Mourrain~\cite{BMo22} provided a convergence rate for $f_{\mathrm{sos}}^{*t}$ in terms of $t$, in which the {\L}ojasiewicz exponent plays a central role. 
The authors proved~\cite[Theorem~4.3]{BMo22}, under some conditions, that there exists $c > 1$ such that 
\begin{align*}
0 < f_{\min}^* -  f_{\mathrm{sos}}^{*t} \le c \cdot \|f\| \deg(f)^{\frac 75} t^{-\frac{1}{2.5n\rho}},
\end{align*}
where $c$ depends on $n$, $\rho$, and $g_1,\ldots, g_r$,  $\|f\|:=\max_{x \in [-1,1]^n} |f(x)|$, and $\rho$ is the error bound exponent for the inequality
\begin{align*}
    \distance(x,M)^{\rho} \le \kappa \cdot |\min\{g_1(x),\ldots,g_r(x),0\}|, \quad \text{for all} \ x\in [0,1]^n
\end{align*}
for some $\kappa > 0$. Then the application of Theorem~\ref{thm:error_bound_semialg_set} and Remark~\ref{rem:arbitrary_residual_functions} yields an upper bound $d^{O(n^2)}$ on $\rho$ and thus proves a lower bound on the convergence rate $t^{-\frac{1}{2.5n\rho}}$ of the SOS relaxation in terms of $n$ and $d$ only. 

\begin{remark}
There are other applications of the Lojasiewicz inequality to polynomial optimization in the literature. For instance, it was shown in~\cite{KS15} that the {\L}ojasiewicz inequality~\eqref{ineq:Kurdyka_bound} can be used to reduce~\eqref{poly_optim_def} to minimization over a ball. 
\end{remark}

\section{Conclusion}
\label{sec:conclusion}
In this paper, we proved a nearly tight upper bound on the {\L}ojasiewicz exponent for semi-algebraic functions over a real closed field $\R$ in a very general setting. 
Unlike the previous best known bound in this setting due to Solern\'o 
\cite{S91}, our bound is independent of the cardinalities of the semi-algebraic descriptions of $f$, $g$, and $A$. 
We exploited this fact to improve the best known error bounds for polynomial and non-linear semi-definite systems.
As an abstraction of the notion of independence from the combinatorial parameters,
we proved a version of {\L}ojasiewicz inequality in polynomially bounded o-minimal structures. We proved  existence of a common {\L}ojasiewicz exponent for certain combinatorially defined infinite (but not necessarily definable) families of pairs of functions, which improves a prior result due to Chris Miller \cite{Miller}.

We end with a few open problems.
\vspace{5px}
\noindent
We proved in Theorem~\ref{thm:error_bound_semialg_set} that the exponent $\rho = d^{O(n)}$ in the error bound with respect to a zero-dimensional semi-algebraic set $M$, and Example~\ref{ex:exponential_dependence} indicates that this bound is indeed tight. 
Without the assumption on the dimension on $M$, the general bound on the exponent $\rho$ in Theorem~\ref{thm:error_bound_semialg_set} is $d^{O(n^2)}$.
There are some indications  in~\cite{K99} for generating examples whose {\L}ojasiewicz exponent is worse than Example~\ref{ex:exponential_dependence}. However, we have not been able to find any example with $\rho = d^{O(n^2)}$, so we do not know if this bound is tight as well. It would be interesting to resolve this gap.

Another interesting question is to prove an upper bound that depends on $\dim M$ which interpolates between the $0$-dimensional and the general case. More precisely, is it possible to improve the upper bound in
Theorem~\ref{thm:error_bound_semialg_set} to $d^{O(n \cdot \dim M)}$ ?

While the emphasis in the current paper has been on proving a bound on the {\L}ojasiewicz exponent which is independent
of the combinatorial parameter there is a special case that merits attention and in which the combinatorial parameter
may play a role. It is well known \cite{Bar97} that the topological complexity (say measured in terms of the Betti numbers) of a real algebraic set in $\R^n$ defined by $s$ quadratic equations is bounded by $n^{O(s)}$. This bound
(unlike the bounds discussed in the current paper) is polynomial in $n$ for fixed $s$. One could
ask if a similar bound also holds in this setting for the {\L}ojasiewicz exponent. 

\section*{Acknowledgements}
The first author was partially supported by NSF grants CCF-1910441 and CCF-2128702. The second author was partially supported by NSF grant CCF-2128702.

\bibliographystyle{siam}
\bibliography{mybibfile}
\end{document}